\def\th@plain{%
	\thm@notefont{}% same as heading font
	\itshape % body font
}
\def\th@definition{%
	\thm@notefont{}% same as heading font
	\normalfont % body font
}
\def\th@remark{
	\thm@notefont{}% same as heading font
	\normalfont % body font  
}
\theoremstyle{definition} \newtheorem{definition}{Definition}[section]
\theoremstyle{definition} \newtheorem{remark}[definition]{Remark}
\theoremstyle{plain} \newtheorem{lemma}[definition]{Lemma}
\theoremstyle{plain} \newtheorem{proposition}[definition]{Proposition}
\theoremstyle{plain} \newtheorem{theorem}[definition]{Theorem}
\theoremstyle{plain} \newtheorem{corollary}[definition]{Corollary}
\theoremstyle{definition} 
\theoremstyle{plain} 
\DeclareMathOperator{\Span}{span}
\DeclareMathOperator{\Vector}{Vec}
\DeclareMathOperator{\Vol}{vol}
\DeclareMathOperator{\Tr}{Tr}
\DeclareMathOperator{\Ker}{Ker}
\DeclareMathOperator{\Dim}{dim}
\newcommand{\ud}{\mathrm d}
\newcommand{\vphi}{\varphi}
\newcommand{\fz}{f_0}
\newcommand{\fu}{f_1}
\newcommand{\fd}{f_2}
\newcommand{\finf}{f_{\infty}}
\newcommand{\fzp}{f_{0,\vphi}}
\newcommand{\fup}{f_{1,\vphi}}
\newcommand{\fdp}{f_{2,\vphi}}
\newcommand{\au}{c_{12}^1}
\newcommand{\ad}{c_{12}^2}
\newcommand{\atu}{c_{10}^1}
\newcommand{\atd}{c_{10}^2}
\newcommand{\aqu}{c_{20}^1}
\newcommand{\aqd}{c_{20}^2}
\newcommand{\aup}{c_{12,\vphi}^1}
\newcommand{\adp}{c_{12,\vphi}^2}
\newcommand{\atup}{c_{10,\vphi}^1}
\newcommand{\atdp}{c_{10,\vphi}^2}
\newcommand{\aqup}{c_{20,\vphi}^1}
\newcommand{\aqdp}{c_{20,\vphi}^2}
\newcommand{\nuz}{\nu^0}
\newcommand{\nuu}{\nu^1}
\newcommand{\nud}{\nu^2}
\newcommand{\nuinf}{\nu^\infty}
\newcommand{\nuzp}{\nu^0_{\vphi}}
\newcommand{\nuup}{\nu^1_{\vphi}}
\newcommand{\nudp}{\nu^2_{\vphi}}
\newcommand{\px}{\frac{\partial}{\partial x}}
\newcommand{\py}{\frac{\partial}{\partial y}}
\newcommand{\pz}{\frac{\partial}{\partial z}}
\newcommand{\ptz}{\frac{\partial^2}{\partial z^2}}
\newcommand{\lambdazu}{\Lambda_1^0}
\newcommand{\lambdazd}{\Lambda_2^0}
\newcommand{\lambdauu}{\Lambda_1^1}
\newcommand{\lambdaud}{\Lambda_2^1}
\newcommand{\funo}{\py+\frac x2\pz}
\newcommand{\fdue}{\px-\frac y2\pz}
\newcommand{\R}{\mathbb R}
\newcommand{\C}{\mathbb C}
\newcommand{\Heis}{\mathbb H}
\newcommand{\contact}{\left(M, \Delta, \mathbf{g}\right)}
\newcommand{\contactp}{\left(M, \Delta, e^{2\vphi}\mathbf{g}\right)}
\numberwithin{equation}{section} %numera le formule in accordo con le sezioni
\author{Francesco Boarotto}
\title{Conformal equivalence of sub-Riemannian 3D contact structures on Lie groups}
\date{\today}
\begin{document}
	
	\maketitle
	
	\begin{abstract}
		In this paper a conformal classification of three dimensional left-invariant sub-Riemannian contact structures is carried out; in particular we will prove the following dichotomy: either a structure is locally conformal to the Heisenberg group $\Heis_3$, or its conformal classification coincides with the metric one. If a structure is locally conformally flat, then its conformal group is locally isomorphic to $SU(2,1)$. 
		%\keywords{Sub-Riemannian geometry \and Lie groups \and left-invariant structures }
		%\PACS{PACS code1 \and PACS code2 \and more}
		%\subclass{53C17 \and 22E30 \and 49J15}
	\end{abstract}
	
		\section{Introduction}
		\label{S_Introduction}
		
		A three dimensional sub-Riemannian manifold is a triplet $\contact$ where
		\begin{itemize}
			\item [i)] $M$ is a smooth connected three dimensional manifold,
			\item [ii)] $\Delta$ is a smooth rank two vector sub-bundle of $TM$
			\item [iii)] $\mathbf{g}_q$ is an Euclidean metric on $\Delta_q$, which varies smoothly with respect to the base point $q\in M$.
		\end{itemize}
		If $M$ is a Lie group and both the sub-Riemannian metric and $\Delta$ are preserved by left-translations defined on $M$, then $\contact$ is said
		to be \emph{left-invariant}. \newline In what follows, we also assume that $\Delta$ satisfies the \emph{bracket generating condition}, i.e. the Lie algebra generated by vector fields
		tangent to the distribution spans at every point the tangent space to the manifold. Under this assumption, $M$ is endowed with a natural structure of metric space, where the
		distance is the so called \emph{Carnot-Carath\'eodory} distance
		\begin{equation}
			\label{eqn:1.1}
			\begin{aligned}
				d(p,q)\doteq&\inf\bigg\{\int_0^T\sqrt{\mathbf{g}_{\gamma(t)}(\dot\gamma(t),\dot\gamma(t))},\: \gamma:[0,T]\to M\:\text{is a Lipschitz curve},\\
				&\hphantom{\{\{}\gamma(0)=p,\:\gamma(T)=q,\:\dot\gamma(t)\in\Delta_{\gamma(t)},\:\text{a.e.}\:t\in[0,T] \bigg\}.
			\end{aligned}
		\end{equation}
		As a consequence of the bracket generating condition, this distance is always finite and continuous, and induces on $M$ its original topology (this is the content of the
		classical Chow-Rashevsky theorem \cite{Rashevsky38},\cite{Book}).\newline
		A sub-Riemannian manifold is said to be \emph{contact} if $\Delta$ can be locally described as the kernel of a contact differential one-form $\omega$, i.e.
		$\bigwedge^m\ud\omega\wedge\omega$ is a non vanishing $n$-form on $M$, where $n=2m+1$.\newline
		Three dimensional sub-Riemannian contact manifolds are the simplest examples of sub-Riemannian geometries;
		they possess two basic functional invariants $\chi$ and $\kappa$ which appear in the expansion of the sub-Riemannian exponential map (\cite{Agrachev96}). It is natural to expect,
		at least heuristically, why there must be two such invariants: locally the sub-Riemannian structure is defined by a pair of orthonormal vector fields in 
		$\R^3$, that is by six scalar equations. One of them can be normalized by a smooth rotation of the frame within its linear hull, while three more are normalized through a 
		smooth change of variables. What remains are indeed two scalar functions.\newline By a well known classification result of three dimensional Lie algebras (see \cite{Kirillov08} or \cite{Jacobson62}),
		the analysis can be restricted to the Lie algebras of the following Lie groups
		\begin{itemize}
			\item [$i)$] $\Heis_3$, the Heisenberg group,
			\item[$ii)$] $A(\R)\oplus \R$, where $A(\R)$ is the group of orientation-preserving affine maps on $\R$,
			\item[$iii)$] $SOLV^+$ and $SOLV^-$ are Lie groups whose Lie algebra is solvable and has a two dimensional square,
			\item[$iv)$] $SE(2)$ and $SH(2)$ are the groups of motion of the Euclidean and the Hyperbolic plane respectively,
			\item[$v)$] The simple Lie groups $SL(2)$ and $SU(2)$.
		\end{itemize}
		Moreover it is easy to show that in each of these cases but one all left-invariant bracket generating distributions are equivalent by automorphisms of the Lie algebra.
		The only case where there exists two nonequivalent distributions is the Lie algebra $\mathfrak{sl}(2)$. More precisely a two dimensional subspace of $\mathfrak{sl}(2)$ is
		called elliptic (hyperbolic) if the restriction of the Killing form on this subspace is sign-definite (sign-indefinite). Accordingly, the notation $SL_e(2)$ and 
		$SL_h(2)$ are used to specify on which subspace the sub-Riemannian structure on $SL(2)$ is defined.\newline In \cite{AgrachevBarilari12} it is proved that left-invariant sub-Riemannian
		structures on three dimensional Lie groups are classified by local isometries, i.e. smooth maps preserving the sub-Riemannian metric on $\contact$. The classification is as
		in figure \ref{fig:1.1}, where a structure is identified by a point $(\chi, \kappa)$ and two distinct points represent non locally isometric structures. In particular they
		proved
		\begin{theorem}
			\label{thm:1.1} 
			Let $\contact$ be a three dimensional left-invariant contact sub-Riemannian manifold.
			\begin{itemize}
				\item [-] If $\chi=\kappa=0$ then the manifold is locally isometric to the Heisenberg group,
				\item [-] If $\chi^2+\kappa^2>0$ then there exist no more than three non isometric normalized structures having these invariants,
				\item [-] If $\chi\neq 0$ or $\chi=0$ and $\kappa\geq 0$ then the structures are locally isometric if, and only if their Lie algebras are isomorphic.
			\end{itemize}
			As a byproduct of this classification, it turns out that there exist non isomorphic Lie groups with locally isometric sub-Riemannian structures, as it is the case of 
			$A(\R)\oplus \R$ and $SL(2)$ with elliptic type killing metric, with the sub-Riemannian structure defined by $\chi=0$, $\kappa<0$.
		\end{theorem}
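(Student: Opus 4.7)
Left-invariance of the structure means that choosing a left-invariant orthonormal frame $\fu, \fd$ for $\Delta$ and setting $\fz \doteq [\fu, \fd]$ produces a global frame whose structure constants
\[
[\fu,\fd]=\au\fu+\ad\fd+\fz,\qquad [\fu,\fz]=\atu\fu+\atd\fd,\qquad [\fd,\fz]=\aqu\fu+\aqd\fd
\]
encode the full sub-Riemannian geometry on a neighbourhood of the identity. Two such structures are locally isometric if and only if their frames are related, after an inner automorphism and a left-translation, by a rotation of $(\fu,\fd)\in SO(2)$, since this is the residual gauge preserving orthonormality of the horizontal frame and the orientation of $\Delta$ (a change of orientation amounts to $\chi\mapsto\chi$, $\kappa\mapsto\kappa$, so it is harmless). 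Classification up to local isometry therefore becomes a classification of $6$-tuples of structure constants modulo this residual $SO(2)$-action.

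The first step is to write $\chi$ and $\kappa$ explicitly as polynomials in the $c^k_{ij}$, using the intrinsic formulas that emerge from the expansion of the sub-Riemannian exponential map (Agrachev 1996); being invariants of the structure, they are constants on $M$ in the left-invariant case. One then analyses how a rotation by angle $\theta$ acts on the six structure constants: the pair $(\atu,\atd)$ rotates with weight one, the pair $(\aqu,\aqd)$ with weight one (in fact packaged with $(\atu,\atd)$ into weight-one combinations), and the components of $\au\fu+\ad\fd$ split into an invariant trace and a weight-two traceless part. Choosing $\theta$ to kill the weight-two phase puts the tuple in a normal form depending on at most $4$ real parameters, which one can then show are functions of $\chi$, $\kappa$ together with finitely many discrete sign choices.

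In the normal form, the assertion $\chi=\kappa=0$ forces all non-trivial brackets except $[\fu,\fd]=\fz$ to vanish, yielding the Heisenberg algebra and proving the first bullet. When $\chi^2+\kappa^2>0$ the normalisation leaves at most a finite set of sign ambiguities, and a direct case-by-case inspection shows that the normalised tuple realises at most three non-isomorphic Lie algebras from the list $\Heis_3,\ A(\R)\oplus\R,\ SOLV^\pm,\ SE(2),\ SH(2),\ SL(2),\ SU(2)$, whose distinguishing invariants (signature of the Killing form, dimension and type of the derived algebra) can be read off from the normal form. This simultaneously establishes the bound in the second bullet and, by matching isomorphism classes of Lie algebras to normalised tuples, the ``only if'' direction of the third bullet. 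The byproduct on $A(\R)\oplus\R$ versus $SL_e(2)$ at $\chi=0$, $\kappa<0$ appears because precisely in this regime two of the normalised tuples coming from non-isomorphic algebras coincide.

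\textbf{Main obstacle.} The delicate point is the normalisation step together with its sharpness: one must verify that after exhausting the $SO(2)$-gauge no further ambiguity remains other than the discrete signs, and that those signs do in fact index distinct isomorphism classes of Lie algebras (rather than being absorbed by some outer Lie-algebra automorphism that is not a rotation). Equivalently, one has to show by hand that the map \emph{(structure constants modulo $SO(2)$)}$\to (\chi,\kappa,\text{signs})$ is injective, and to count the fibres to get the sharp bound of three. The count is tight exactly when $\chi=0$, $\kappa<0$, which is the content of the byproduct; away from this locus the count drops and the third bullet follows. Everything else is bookkeeping once this normal-form analysis is in place.
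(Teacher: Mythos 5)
First, a point of reference: the paper does not prove Theorem \ref{thm:1.1} at all --- it is quoted verbatim from \cite{AgrachevBarilari12} as background, and the only trace of its proof inside this paper is the canonical-frame machinery recalled in Propositions \ref{prop:2.3} and \ref{prop:2.4}. So your proposal has to be judged against the argument in that reference, whose strategy (structure constants of a left-invariant frame modulo the residual $SO(2)$ gauge, then classification by $(\chi,\kappa)$) is indeed the one you describe. At the level of strategy you are on the right track.

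However, what you have written is a plan, not a proof, and the gap sits exactly where the theorem's content is. You never produce the normal form: the explicit reduction to the canonical frame, the verification that the surviving parameters are functions of $(\chi,\kappa)$ plus discrete data, the count showing the fiber has at most three elements, and the identification of which Lie algebras from the list are realised for each $(\chi,\kappa)$ are all asserted and then deferred in your ``Main obstacle'' paragraph --- but those steps \emph{are} the three bullets of the theorem. There is also a concrete error in the gauge bookkeeping that the normalisation hinges on: under a rotation of the horizontal frame by $\theta$, the vector $(\au,\ad)$ (coming from $[\fd,\fu]=\fz+\au\fu+\ad\fd$) transforms with weight one and cannot be killed by a constant rotation, while it is the traceless symmetric part of the matrix $(c_{i0}^j)$ --- equivalently of the quadratic form $\{h,h_0\}$ --- that rotates with weight two and is what one normalises to reach $\{h,h_0\}=2\chi h_1h_2$ as in Proposition \ref{prop:2.3}. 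You have these two roles swapped, so ``choosing $\theta$ to kill the weight-two phase'' as you describe it would not produce the canonical frame. Finally, note that $\fz\doteq[\fu,\fd]$ is not the Reeb field unless $\au=\ad=0$; the Reeb field is only the transverse component of $[\fd,\fu]$, which matters for the structure equations you write down. To turn this into a proof you would need to carry out the normal-form computation explicitly and perform the case analysis --- precisely what is done in \cite{AgrachevBarilari12}.
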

		The aim of this paper is to carry out a similar classification task if we assume that we may act on a three dimensional contact manifold $\contact$ using the sub-Riemannian conformal group,
		that is we allow all those smooth maps which just have to preserve angles on the distribution $\Delta$, not necessarily distances (i.e. we may multiply the sub-Riemannian metric
		by smooth functions $e^{2\vphi}$, $\vphi\in C^\infty(M)$); notice that this further degree of freedom permits to normalize
		one additional equation describing locally $\contact$, hence it is natural to expect the existence of just one functional conformal invariant.\newline Using the same approach of \cite{CastroMontgomery08} we construct on
		$\contact$ its associated Fefferman metric, which provides us with the right conformal setting to work with since the beginning; in particular, by explicit calculations,
		the conformal invariant associated to a generic three dimensional contact manifold is found and presented as the ratio of the only two functionally independent entries of the Weyl tensor relative to this Lorentz pseudo-metric. This is indeed the main result of sections \ref{S_Fefferman} and \ref{S_Conformal_Invariants}.\newline
		Section \ref{S_Flatness} is devoted to the investigation of the locally conformally flat left-invariant sub-Riemannian structures. The unimodular and the non-unimodular situations are treated separately.
		
		\begin{definition}
			\label{def:1.2}
			We say that a contact three dimensional sub-Riemannian manifold $\contact$ is locally conformally flat if, fixed any point $q\in M$ there exists a neighborhood
			$U$ of $q$ and a function $\vphi:U\to\R$ so that the rescaled sub-Riemannian structure $\contactp$ becomes locally isometric in $U$ to the Heisenberg group.
			If $\vphi$ can be defined globally on $M$ we say that the manifold is conformally flat.
		\end{definition}
		\begin{definition}
			\label{def:1.3}
			We say that a connected Lie group $M$ is unimodular if 
			\[
			\text{trace}\left(\text{ad}(m)\right)=0,\;\forall m\in\mathfrak{m},
			\]
			where $\mathfrak{m}$ denotes the Lie Algebra of $M$.
		\end{definition}
		By a direct computation based upon the realization of explicit models for such structures, we will show that the local conformal flatness of the Fefferman metric associated to $\contact$ (i.e. the vanishing of its Weyl tensor) is equivalent to the local conformal flatness of the contact sub-Riemannian manifold itself.\newline In section \ref{S_Classification} we study the so called \emph{chains' equation} for the Fefferman metric (\cite{Farris86}, \cite{Lee86}, \cite{Montgomery06}),
		that is the zero level set for the Hamiltonian flow generated by this pseudo-metric. Chains, considered as a set of unparametrized curves, are invariant under conformal
		rescalings of $\contact$. We will explicitly integrate their flow and, consequently, compute the tangent space to the chains' set; this will turn to be dependent on the metric invariants $\chi$ and $\kappa$. As any conformal map must preserve it, we will be able to deduce that as soon as the Fefferman metric is not locally conformally flat, then the conformal classification of a contact sub-Riemannian manifold $\contact$ is forced to coincide with the metric one.
		\begin{theorem}
			\label{thm:1.2}
			Let $\contact$ be a left-invariant 3D sub-Riemannian contact manifold. If the Fefferman metric associated to $\contact$ is not locally conformally flat, then its conformal classification is uniquely determined by the pair $\chi$ and $\kappa$; in particular it coincides with the metric one.
		\end{theorem}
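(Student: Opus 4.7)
The plan is to exploit the conformal invariance of chains of the Fefferman metric in order to recover the metric invariants $\chi$ and $\kappa$ from purely conformal data. Recall that chains are the projections to $M$ of null geodesics of the Fefferman pseudo-metric and that, as unparametrized curves, they depend only on the conformal class of $\mathbf{g}$. By Theorem~\ref{thm:1.1}, the pair $(\chi,\kappa)$ together with the Lie-algebraic type uniquely determines a left-invariant structure up to local isometry; thus to conclude it suffices to show that any local conformal equivalence preserves $\chi$ and $\kappa$, at which point the conformal classification must collapse onto the metric one.

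First I would write the Fefferman metric $\mathfrak{g}$ on the circle bundle $\pi:N\to M$ in the left-invariant coframe dual to $(\fz,\fu,\fd)$, augmented by the fiber coordinate $\theta$. The associated Hamiltonian $H$ on $T^*N$ then becomes an explicit polynomial in the dual momenta $(p_0,p_1,p_2,p_\theta)$ whose coefficients are built out of the structure constants $c_{ij}^k$ of the underlying Lie algebra and of the scalars $\chi,\kappa$ carried by the Fefferman construction. The chains' equation is the restriction of the Hamiltonian flow of $H$ to the null level set $\{H=0\}$.

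The central technical step is the integration of this flow, or at least the computation of the Taylor expansion of its projections to $M$, in order to describe the germ at a base point $q\in M$ of the set of chains through $q$. Projecting the solution, I obtain a parametrized family $\gamma_\xi(t)$ of chains with initial null covector $\xi$, and I would expand $\gamma_\xi$ in powers of $t$ in the left-invariant frame $(\fz,\fu,\fd)$. The horizontal part of the first jet is free, reflecting the residual gauge of reparametrization and of the $S^1$-action on the fiber; the higher jets in the direction of $\fz$, after quotienting by this gauge, carry non-trivial invariants. A direct computation should identify the first surviving coefficients as explicit functions of $\chi(q)$ and $\kappa(q)$. This is precisely where the non-flatness hypothesis enters: by the results of Section~\ref{S_Flatness}, the Weyl tensor of $\mathfrak{g}$ does not vanish, and one shows that the very terms in the chain expansion that distinguish different values of $(\chi,\kappa)$ are themselves non-degenerate expressions in the Weyl components, so that the assignment $(\chi,\kappa)\mapsto(\text{germ of chain locus at }q)$ is injective outside the conformally flat stratum. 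Isolating this injectivity in a clean algebraic form is the main obstacle I anticipate.

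Once this is in place, the conclusion is formal. Given any local conformal equivalence $F:\contact\to (M',\Delta',\mathbf{g}')$, the differential $dF_q$ must send the chain locus at $q$ to the chain locus at $F(q)$, and therefore the previous step forces $\chi(q)=\chi'(F(q))$ and $\kappa(q)=\kappa'(F(q))$. Invoking Theorem~\ref{thm:1.1}, the source and target structures are locally isometric, which is exactly the statement that, away from the conformally flat locus, the conformal classification of left-invariant $3$D sub-Riemannian contact structures coincides with the metric one and is determined by the pair $(\chi,\kappa)$.
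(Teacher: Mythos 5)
Your overall strategy --- chains are conformal invariants of the Fefferman metric, so if one can read $\chi$ and $\kappa$ off the chain structure then any conformal equivalence preserves them and the classification collapses onto the metric one --- is exactly the strategy of the paper. But the technical heart of your argument is missing, and you say so yourself (``Isolating this injectivity in a clean algebraic form is the main obstacle I anticipate''). The paper does not Taylor-expand the projected chains in configuration space. It works entirely in momentum coordinates $(h_0,h_1,h_2,h_\infty)$ on $T^*Z$: it identifies first integrals of the chain flow coming from the Lie--Poisson structure (the Casimir $I=h_0^2-(\chi-\kappa)h_1^2+(\chi+\kappa)h_2^2$ in the unimodular case, and the Patera--Sharp--Winternitz--Zassenhaus invariants $J,K,L$ of the relevant solvable algebras in the $\mathfrak{solv}^\pm$ cases), proves that conformal rescalings preserve the algebra of integrals of motion on $\{H=0\}$ and leave $h_\infty$ untouched, and then evaluates the tangent distribution of the resulting foliation of the $\{h_1,h_2\}$-plane at the specific point $(1,1)$, showing its entries are explicit functions of $\chi$ and $\kappa$. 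Your jet-expansion route is plausible (it is closer in spirit to Castro--Montgomery), but as written it contains no computation, so the claimed recovery of $(\chi,\kappa)$ from the germ of the chain locus is an unsupported assertion rather than a proof.

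A second, more substantive problem is how you use the non-flatness hypothesis. You assert that the chain-expansion coefficients distinguishing $(\chi,\kappa)$ are ``non-degenerate expressions in the Weyl components,'' which is not how the hypothesis enters and is not obviously true. In the paper, $\alpha\neq 0$ plays two roles: it excludes the conformally flat stratum, where the conclusion is actually \emph{false} (e.g.\ $\Heis_3$ is conformally equivalent to non-isometric unimodular structures with $\chi=0$, $\kappa\neq 0$), and it guarantees that the particular combinations recovered from the foliation (e.g.\ $\kappa+8\chi$ together with the relation $\delta=0$, or $\chi\pm\tfrac12\kappa$ in the unimodular case with $\kappa\chi\neq0$) suffice to pin down $\chi$ and $\kappa$ individually. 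Any completed version of your argument must confront the same degeneracies: you need to verify that on the flat stratum the invariants you extract genuinely fail to separate structures, and that off it they do. Finally, your appeal to Theorem~\ref{thm:1.1} correctly notes that $(\chi,\kappa)$ determines the structure only together with the Lie-algebraic type, but you never argue that a conformal equivalence preserves that type; the paper's case-by-case treatment ($\mathfrak{solv}^{\pm}$ versus unimodular, with differently shaped invariants in each case) is what implicitly handles this.
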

		Lastly, section \ref{S_Conformal_Group_Heisenberg} is devoted to the study of the conformal group $Conf(\Heis_3)$, of the three dimensional Heisenberg sub-Riemannian structure.\newline We start with its explicit computation using the Hamiltonian viewpoint introduced in \cite{Book}.
		\begin{definition}
			\label{def:1.6}
			Given $X\in\Vector(M)$, we say that $X$ is a sub-Riemannian conformal vector field if the flow generated by $X$ is a conformal map, i.e. preserves the conformal class of the sub-Riemannian metric $\mathbf{g}$.
		\end{definition}
		
		Our calculations generalize previous results like those in \cite{Figueroa99} where the group of isometries of $\Heis_3$ was presented.\newline We will also show that the Lie algebra of conformal vector fields, $\mathfrak{conf}(\Heis_3)$,
		is in fact isomorphic to $\mathfrak{su}(2,1)$. This proposition combines our calculations with the purely algebraic Tanaka's prolongation procedure (\cite{Tanaka79},\cite{Zelenko09}).\newline Since locally conformally
		equivalent structures have locally isomorphic conformal groups, we deduce that a locally conformally flat manifold $\contact$ not only must have a locally conformally flat associated Fefferman metric,
		but also its conformal group has to be locally isomorphic to $SU(2,1)$.This in turn will complete the classification task which motivated the paper.\newline
		The main result of the last section is the following:
		\begin{theorem}
			\label{thm:1.7}
			Let $\contact$ be a left-invariant three dimensional contact manifold. Then $\contact$ is locally conformally flat if and only if its associated Fefferman metric is locally conformally flat; if this is the case, then
			\[
			\mathfrak{conf}\contact\cong\mathfrak{su}(2,1).
			\] 
		\end{theorem}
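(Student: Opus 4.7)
The theorem splits naturally into a biconditional for local conformal flatness and an identification of $\mathfrak{conf}\contact$ with $\mathfrak{su}(2,1)$ under that assumption. My plan is to treat these in sequence, leveraging the groundwork from the preceding sections.

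For the biconditional, the forward implication---if $\contact$ is locally conformally flat, then its Fefferman metric is locally conformally flat---is essentially formal, following from the naturality of the construction recalled in Section \ref{S_Fefferman}: a conformal rescaling $\mathbf{g}\mapsto e^{2\vphi}\mathbf{g}$ of the sub-Riemannian metric induces a conformal rescaling of the Lorentz pseudo-metric on the associated circle bundle, so a local isometry $\contactp\cong\Heis_3$ lifts to a local conformal equivalence of the two Fefferman metrics; since the Fefferman metric of $\Heis_3$ itself has vanishing Weyl tensor (a direct calculation), the implication follows. For the converse, I would proceed along the lines already outlined for Section \ref{S_Flatness}: on each left-invariant model from Theorem \ref{thm:1.1} I would fix a convenient orthonormal frame, compute the Fefferman metric and its Weyl tensor, and identify when its two functionally independent entries---whose ratio gives the conformal invariant of Section \ref{S_Conformal_Invariants}---both vanish. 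In each such case I would exhibit an explicit conformal factor $\vphi$ producing a local isometry with $\Heis_3$, treating the unimodular and non-unimodular families separately as announced in Section \ref{S_Flatness}.

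For the identification $\mathfrak{conf}\contact\cong\mathfrak{su}(2,1)$, the biconditional just established places us, locally, in conformal equivalence with $\Heis_3$. Because Definition \ref{def:1.6} characterizes conformal vector fields in terms of a flow-level condition invariant under conformal rescalings, this local equivalence produces a Lie algebra isomorphism
\[
\mathfrak{conf}\contact\cong\mathfrak{conf}(\Heis_3).
\]
It then remains to identify $\mathfrak{conf}(\Heis_3)$ with $\mathfrak{su}(2,1)$, which is the task of Section \ref{S_Conformal_Group_Heisenberg}: one first uses the Hamiltonian characterization of conformal vector fields on $\Heis_3$ to compute the algebra as a concrete finite-dimensional Lie algebra of vector fields, and then Tanaka's prolongation procedure matches the abstract structure so obtained with $\mathfrak{su}(2,1)$.

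The main obstacle, in my view, is the converse direction of the biconditional. The forward direction and the isomorphism with $\mathfrak{su}(2,1)$ both reduce either to naturality or to structured computations on a single model; by contrast, concluding that vanishing Weyl of the Fefferman metric genuinely implies sub-Riemannian conformal flatness (rather than just the vanishing of a single scalar ratio) requires, for each Lie group in the classification, the explicit production of a conformal factor realizing the equivalence with Heisenberg, and it is precisely this case-by-case computational task that occupies the core of Section \ref{S_Flatness}.
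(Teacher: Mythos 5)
Your proposal is correct and follows essentially the same route as the paper: the forward implication via the conformal naturality of the Fefferman construction together with the vanishing of $\alpha$ and $\beta$ for $\Heis_3$, the converse via the case-by-case exhibition of explicit rescaling functions $\vphi$ for the unimodular and non-unimodular models (Theorems \ref{thm:5.5} and \ref{thm:5.8}), and the final isomorphism by transporting $\mathfrak{conf}(\Heis_3)$ through the local conformal equivalence and identifying it with $\mathfrak{su}(2,1)$ via the explicit Hamiltonian computation and Tanaka prolongation of Section \ref{S_Conformal_Group_Heisenberg}. You also correctly locate the real computational burden in the converse direction.
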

		
		Some of the results presented in this work were already investigated in \cite{Falbel94,FalbelGorodski95,FalbelGorodski96}; in particular they also present the conformal invariant and pay special attention to flat structures. To be more precise, the same problems are addressed especially in \cite{FalbelGorodski96}, where Section $3$ is devoted to a metric classification via curvature-like invariants and the conformal one is carried out in section $4$, also for the $4$-dimensional case; finally an analogous classification for higher dimensional contact structures is pursued and achieved in \cite{FalbelGorodski95}.
		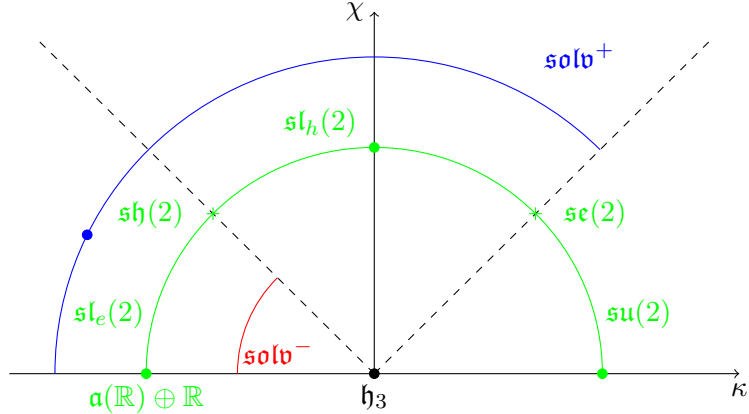
\begin{figure}[h!]
			\centering
			\begin{tikzpicture}[scale=1.2]
			\draw[->] (-4,0) -- (4,0) node[below] {$\kappa$};
			\draw[->] (0,0) -- (0,4) node[left] {$\chi$};
			\draw (0,0) node[below]{$\mathfrak h_3$};
			
			\draw[blue] (0,0) ++ (-3.5,0) arc (180:45:3.5);
			\draw[blue] (50:3.5) node[above=20 pt] {$\mathfrak{solv}^+$};                                                                                                                                              
			\coordinate (A) at (154:3.5);
			\filldraw[blue] (A) circle(1.5 pt);
			
			\draw[red] (0,0)++ (-1.5,0) arc (180:135:1.5);
			\draw[red] (124:1) node[below=20 pt,left] {$\mathfrak{solv}^-$};                                                                                                                                              
			\coordinate (B) at (154:1.5);
			
			\draw[green] (0,0)++ (2.5,0) arc(0:180:2.5) ;
			
			\coordinate(C) at (-3.7,+3.7);
			\coordinate(D) at (+3.7,+3.7);
			\filldraw[green] (135:2.5) node[above,left=7pt]{$\mathfrak{sh}(2)$};
			\filldraw[green] (45:2.5) node[above,right=7pt]{$\mathfrak{se}(2)$};
			
			\draw[dashed] (0,0) --(C);
			\draw[dashed] (0,0) --(D);
			
			\draw[green] (158:2.5) node[left=20pt,below]{$\mathfrak{sl}_e(2)$} ;
			\draw[green] (22:2.5) node[right=20pt,below]{$\mathfrak{su}(2)$} ;
			\draw[green] (90:2.5) node[left=20pt,above]{$\mathfrak{sl}_h(2)$} ;
			\filldraw[green] (-2.5,0) circle(1.5 pt);
			\draw[green] (180:2.5) node[below]{$\mathfrak{a}(\mathbb R)\oplus \mathbb R$} ;
			
			\draw[green] plot [only marks, mark=+] coordinates {(45:2.5)};
			\draw[green] plot [only marks, mark=+] coordinates {(135:2.5)};
			\filldraw[green] (2.5,0) circle(1.5 pt);
			\filldraw[green] (0,2.5) circle(1.5 pt);
			
			\filldraw (0,0) circle(1.5 pt);
			
			\end{tikzpicture}
			\caption{Conformal classification of 3D left-invariant structures. Points on different circles denotes different classes of
				equivalence of structures under the normalization condition $\chi^2+\kappa^2=1$. Circled structures are locally conformally flat. Unimodular structures are those in the middle circle.}
			\label{fig:1.1}
		\end{figure}
		
		\section{Basic definitions}
		\label{S_definitions}
		
		We start by recalling the definition of a sub-Riemannian structure.
		\begin{definition}
			\label{def:2.1}
			A \emph{sub-Riemannian} structure is a triplet $\contact$, where
			\begin{itemize}
				\item [i)] $M$ is a $n$-dimensional smooth connected manifold,
				\item [ii)] $\Delta$ is a smooth distribution in $TM$ of constant rank $k<n$, that is a smooth map which associates to any $q\in M$
				a $k$-dimensional subspace $\Delta_q\subset T_qM$,
				\item [iii)] $\mathbf{g}_q$ is an Euclidean metric on $\Delta_q$, which varies smoothly with respect to the base point $q\in M$.
			\end{itemize}
		\end{definition}
		The set of smooth sections of the distribution
		\[
		\bar{\Delta}\doteq \{f\in\Vector(M)\;|\; f(q)\in\Delta_q,\;\forall q\in M\}\subset \Vector(M)
		\]
		is a subspace of the set of smooth vector fields on $M$ and its elements are called \emph{horizontal} vector fields.\newline A sub-Riemannian
		structure is said to be \emph{bracket generating} if the Lie algebra generated by the horizontal vector fields spans at every point
		the tangent space to the manifold, i.e.
		\[
		\text{Lie}\left(\bar{\Delta}\right)_q=T_qM,\;\forall q\in M.
		\]
		A Lipschitz curve $\gamma:[0,T]\to M$ is said to be \emph{admissible} if its derivative is a.e. horizontal, i.e. if 
		\[
		\dot\gamma(t)\in\Delta_{\gamma(t)}\quad\text{for a.e.}\quad t\in [0,T].
		\]
		For an admissible curve $\gamma$, its length will be
		\[
		l(\gamma)=\int_0^T\sqrt{\mathbf{g}_{\gamma(t)}(\dot\gamma(t),\dot\gamma(t))}.
		\]
		Let $S_{pq}$ denote the set of all the admissible curves joining the points $p$ and $q$; the \emph{Carnot-Carath\'eodory} distance
		between any two points $p$ and $q$ in $M$ will then be computed as
		\[
		d(p,q)\doteq \inf_{\gamma\in S_{pq}}l(\gamma).
		\]
		Under the bracket generating assumption, by the classical Chow-Rashevsky theorem \cite{Book}, \cite{Rashevsky38} it is well known that $d$ becomes a well defined
		metric on $M$, which induces on the manifold its original topology.
		\begin{definition}
			\label{def:2.2}
			Assume that $\Dim(M)=2m+1$. A sub-Riemannian structure on $M$ is said to be \emph{contact} if $\Delta$ is locally a contact structure, that is $\Delta=\Ker(\omega)$,
			where $\omega\in \Lambda^1(M)$ satisfies the \emph{contact condition} $\bigwedge^m\ud\omega\wedge \omega\neq 0$.
			Notice that a contact structure is forced to be bracket generating.
		\end{definition}
		If $M$ is a Lie group a sub-Riemannian structure is said to be \emph{left-invariant} if 
		\begin{itemize}
			\item [i)] $\Delta_{sh}=L_{s^*}\Delta_h$,
			\item [ii)] $\mathbf g_h(v,w)=\mathbf g_{sh}( L_{s^*}v,L_{s^*}w),\;\forall s,h \in M$,
		\end{itemize}
		where $L_s$ denotes the left multiplication map on $M$. In particular, to define a left-invariant structure it is sufficient to fix a
		subspace of the Lie algebra $\mathfrak m$ of $M$ and an inner product on it.\newline 
		From here onwards we will restrict our attention to the three dimensional case, i.e. let $\Dim(M)=3$.\newline
		The contact condition is equivalent to say that $\ud\omega$ is a symplectic form when restricted on the contact
		planes $\Delta_q$. Any scalar multiple of $\omega$ also defines a contact form, that is $f\omega$ is a contact form for any nonzero real-valued function $f$; 
		a natural normalization can be given by the requirement that $\ud\omega|_{\Delta_q}$ coincides with the area form on $\Delta_q$,
		and the ambiguity on the sign can be avoided by assuming that an orientation is assigned on the contact planes.\newline 
		Fix a point $\bar q\in M$. It is always possible to find a neighborhood $U$ of $\bar q$ in $M$ where $\Delta$ can be described
		in terms of an orthonormal frame $\{\fu,\fd\}$, that is 
		\[
		\Delta_q=\Span\{\fu(q), \fd(q)\}\quad\forall q\in U;
		\]
		in the left-invariant case the frame can be defined globally so that the equality above holds everywhere on the manifold.\newline Naturally
		associated to a contact structure there is the so called \emph{Reeb vector field} $\fz$ which is uniquely characterized via the
		requirements
		\[
		\iota_{\fz}\omega=1,\quad \iota_{\fz}\ud\omega=0;
		\]
		the contact condition implies that $\{\fz,\fu,\fd\}$ is a local basis for the tangent space $TM$ (global in the left-invariant case).\newline
		We define the sub-Riemannian Hamiltonian as follows: it is a smooth function $h\in\ C^\infty(T^*M)$ such that
		\begin{equation}
			\label{eqn:2.1}
			h(\lambda)\doteq \max_{u\in\Delta_q}\left\{\langle \lambda,u\rangle-\frac12|u|^2\right\},\quad \lambda\in T^*M,q=\pi(\lambda)\footnotemark.
		\end{equation}
		\footnotetext{Here $\langle\cdot,\cdot\rangle$ stands for the usual duality between the tangent and the cotangent space.}
		Consider, for $i=0,1,2$, the linear on fibers functions $h_i\in C^\infty(T^*M)$ defined as
		\[
		h_i(\lambda)\doteq \langle \lambda, f_i(q)\rangle,\quad \lambda\in T^*M, q=\pi(\lambda);
		\]
		the sub-Riemannian Hamiltonian can be rewritten as 
		\begin{equation}
			\label{eqn:2.2}
			h(\lambda)=\frac12\left(h_1^2(\lambda)+h_2^2(\lambda)\right).
		\end{equation}
		Notice that all the information about the sub-Riemannian structure is encoded in the sub-Riemannian Hamiltonian: it is indeed independent
		on the chosen orthonormal frame, and from \eqref{eqn:2.2} we can reconstruct the Riemannian metric $\mathbf{g}_q$ on $\Delta_q$ and the
		annihilator $\Delta_q^\perp\subset T^*_qM$ which is nothing but the kernel of $h$ restricted to the fiber $T^*_qM$.\newline Let $\{\nuz, 
		\nuu, \nud\}$ be the coframe dual to $\{\fz,\fu,\fd\}$; since this is a basis of one forms on $T^*M$, we have
		\begin{align}
			\label{eqn:2.3}
			&\ud\nuz=\nuu\wedge\nud,\crcr
			&\ud\nuu=\atu\nuz\wedge\nuu+\aqu\nuz\wedge\nud+\au\nuu\wedge\nud,\crcr
			&\ud\nud=\atd\nuz\wedge\nuu+\aqd\nuz\wedge\nud+\ad\nuu\wedge\nud,
		\end{align}
		for some $c_{ij}^k$'s smooth functions on $M$.\newline 
		Via Cartan's formula $\ud\omega(X,Y)=X\omega(Y)-Y\omega(X)-\omega([X,Y])$, we have the dual version of the equations above, which are
		commonly referred to as \emph{structural equations} for the contact structure $\contact$ and read
		
		\begin{equation}
			\label{eqn:2.4}
			\left\{
			\begin{alignedat}{6}    
				&[\fd,\fu]&=&\fz+\au\fu+\ad\fd,\\
				&[\fu,\fz]&=&\atu\fu+\atd\fd,\\
				&[\fd,\fz]&=&\aqu\fu+\aqd\fd.
			\end{alignedat}
			\right.  
		\end{equation}
		Contact three dimensional structures possess two metric scalar invariants $\chi$ and $\kappa$, which are independent on the orthonormal
		frame chosen for $\Delta$. Consider the Poisson bracket $\{h, h_0\}_q$: it can be shown that this is a traceless, quadratic on fibers map on the cotangent space $T^*_qM$.
		The first invariant 
		\[
		\chi(q)=\sqrt{-\det\{h, h_0\}_q}=\sqrt{-\atu\aqd+\left(\frac{\atd+\aqu}{2}\right)^2}
		\]
		is zero if and only if the the one-parametric flow generated by $\fz$ is a sub-Riemannian isometry;
		the second invariant
		\[
		\kappa(q)=\fd(\au)-\fu(\ad)-(\au)^2-(\ad)^2+\frac12(\atd-\aqu)
		\]
		is the analogue of the curvature for a two-dimensional Riemannian surface, and appears as a term in the asymptotic expansion of the cut
		locus of the geodesics of our structure; they are found and studied in grater detail in \cite{Agrachev96}. Left-invariant three dimensional
		contact structures can be classified through these invariants, see \cite{AgrachevBarilari12}. We only remark here the possibility of
		choosing a \emph{canonical frame}: we have to distinguish two cases
		\begin{proposition}
			\label{prop:2.3}
			Let $M$ be a three dimensional contact sub-Riemannian manifold and $q\in M$. If $\chi(q)\neq 0$, then there exists a local frame such that there
			holds
			\[
			\{h,h_0\}=2\chi h_1h_2;
			\]
			in particular, in the Lie group case endowed with a left-invariant structure, there exists a unique (up to sign) canonical frame $\{\fz,\fu,\fd\}$
			such that \eqref{eqn:2.4} becomes
			\[
			\left\{
			\begin{alignedat}{6}
			&[\fd,\fu]&=& \fz+\au\fu+\ad\fd,\\
			&[\fu,\fz]&=& \atd\fd,\\
			&[\fd,\fz]&=& \aqu\fu.
			\end{alignedat}
			\right.
			\]
		\end{proposition}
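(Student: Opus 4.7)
The strategy is to view $\{h,h_0\}$ as a symmetric bilinear form on $\Delta^*$ and use a rotation of the orthonormal frame within $\Delta$ to bring it into off-diagonal normal form. First, I would compute $\{h,h_0\}$ directly: writing $h=\tfrac12(h_1^2+h_2^2)$ and using the Leibniz rule,
\[
\{h,h_0\}=h_1\{h_1,h_0\}+h_2\{h_2,h_0\}.
\]
Since $\{h_X,h_Y\}(\lambda)=\langle\lambda,[X,Y]\rangle$ and the Reeb property $\lie_{\fz}\omega=0$ forces $[\fu,\fz],[\fd,\fz]\in\Delta$, the structural equations \eqref{eqn:2.4} produce
\[
\{h,h_0\}=\atu h_1^2+(\atd+\aqu)\,h_1h_2+\aqd h_2^2,
\]
a quadratic form purely in the $\Delta^*$-coordinates $(h_1,h_2)$. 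Next I would verify tracelessness, $\atu+\aqd=0$: applying Cartan's formula together with $\lie_{\fz}\omega=0$ gives $\lie_{\fz}\ud\omega=0$, and a direct evaluation of $(\lie_{\fz}\ud\omega)(\fu,\fd)$ using $\ud\omega|_\Delta=\nuu\wedge\nud$ (the area form of $\mathbf g$, by the normalization of $\omega$) yields exactly $\atu+\aqd=0$.

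Second, a traceless symmetric $2\times 2$ matrix can always be rotated to annihilate its diagonal entries: one solves
\[
\cos(2\theta)\,\atu+\sin(2\theta)\,\tfrac{\atd+\aqu}{2}=0
\]
for $\theta$, which is always possible. Applying this rotation to $\{\fu,\fd\}$ produces a frame in which $\{h,h_0\}=\alpha h_1 h_2$ for some $\alpha$, and the identity $\chi=\sqrt{-\det\{h,h_0\}}$ combined with the tracelessness established above forces $|\alpha|=2\chi$. Since $\chi(q)\neq 0$ by hypothesis, $\alpha\neq 0$, and a reflection or swap among the two frame vectors fixes the sign, giving $\{h,h_0\}=2\chi h_1 h_2$.

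Third, in the left-invariant case all the $c_{ij}^k$ are constants on $M$, hence the rotation angle $\theta$ can be chosen constant and the new frame remains left-invariant. Uniqueness up to sign then follows because, when $\chi\neq 0$, the stabilizer in $SO(2)$ of the non-degenerate normal form $2\chi h_1 h_2$ is $\{\pm\mathrm{Id}\}$. In the canonical frame so obtained, $\atu=\aqd=0$ by construction, so the brackets reduce exactly to $[\fu,\fz]=\atd\fd$ and $[\fd,\fz]=\aqu\fu$, as in the statement. The only delicate point I anticipate is the bookkeeping of sign conventions---between the Poisson bracket on $T^*M$, the identity $\{h_X,h_Y\}=\pm h_{[X,Y]}$, and the signed square root defining $\chi$---so that the normalization lands on $+2\chi h_1 h_2$ rather than its negative. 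Beyond this, the argument reduces to the linear algebra of traceless symmetric $2\times 2$ matrices together with the single geometric input provided by the Reeb normalization of $\omega$.
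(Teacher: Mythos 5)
The paper does not actually prove this proposition: it is recalled from Agrachev--Barilari (\cite{AgrachevBarilari12}) without argument, so there is nothing to compare against. Your proof is correct and is the standard one: the identification of $\{h,h_0\}$ with the quadratic form $\atu h_1^2+(\atd+\aqu)h_1h_2+\aqd h_2^2$, tracelessness via $\lie_{\fz}\ud\omega=0$ and the normalization $\ud\omega(\fu,\fd)=1$, the rotation annihilating the diagonal, and the $\{\pm\mathrm{Id}\}$ stabilizer for uniqueness are all sound. The only point you might make explicit for the general (non-left-invariant) local claim is that the rotation angle $\theta$ is then a function of the base point: it can be chosen smoothly near $q$ because $\chi\neq 0$ keeps the traceless form nonzero there, and a non-constant $\theta$ does not spoil the computation since the extra terms $\mp\fz(\theta)$ it introduces into the brackets $[f_i,\fz]$ are antisymmetric and cancel in $\{h,h_0\}$, so the form still transforms by pure conjugation.
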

		\begin{proposition}
			\label{prop:2.4}
			Let $M$ be a three dimensional contact sub-Riemannian manifold such that $\chi=0$ around a given point $q\in M$. Then there exists a rotation
			of the frame associated to $\contact$ such that
			\eqref{eqn:2.4} locally becomes
			\[
			\left\{
			\begin{alignedat}{6}
			&[\fd,\fu]&=& \fz,\\
			&[\fu,\fz]&=& \kappa\fd,\\	    	
			&[\fd,\fz]&=& -\kappa\fu.
			\end{alignedat}
			\right.
			\]
		\end{proposition}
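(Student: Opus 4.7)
The plan splits into an algebraic consequence of $\chi=0$ and a subsequent gauge choice for the horizontal frame. First, recall that $\{h,h_0\}$ is traceless and quadratic on $T^*_qM$, so its matrix in the basis $(h_1,h_2)$ is the symmetric traceless $2\times 2$ matrix with diagonal entries $\atu$ and $\aqd=-\atu$ and off-diagonal entry $\tfrac{\atd+\aqu}{2}$; its determinant equals $-\chi^2(q)$. A symmetric traceless $2\times 2$ matrix with vanishing determinant must itself vanish, so $\chi(q)=0$ forces $\atu=\aqd=0$ and $\atd+\aqu=0$. Setting $\alpha:=\atd=-\aqu$, the last two brackets of \eqref{eqn:2.4} already read $[\fu,\fz]=\alpha\fd$ and $[\fd,\fz]=-\alpha\fu$, as claimed.

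The remaining task is to make $\au=\ad=0$ by a smooth rotation $\tilde\fu=\cos\theta\,\fu+\sin\theta\,\fd$, $\tilde\fd=-\sin\theta\,\fu+\cos\theta\,\fd$ of the horizontal frame, with $\theta\in C^\infty(U)$ to be determined (the Reeb vector field $\fz$ is intrinsically attached to the contact form and remains unchanged). A direct Leibniz computation gives
\[
[\tilde\fd,\tilde\fu]=[\fd,\fu]+\fu(\theta)\fu+\fd(\theta)\fd,
\]
and re-expressing the right-hand side in the basis $(\fz,\tilde\fu,\tilde\fd)$ shows that the $\fz$-coefficient is preserved, while the requirement $\aut=\adt=0$ reduces, after inverting the rotation matrix, to the Pfaffian system $\fu(\theta)=-\au$, $\fd(\theta)=-\ad$.

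The main obstacle is thus the local solvability of this last system. Write $\ud\theta=-\au\,\nuu-\ad\,\nud+\lambda\,\nuz$ and impose $\ud^2\theta=0$ using \eqref{eqn:2.3}. The Jacobi identities coming from \eqref{eqn:2.4} under $\chi=0$, namely $\fu(\alpha)+\fz(\au)+\alpha\ad=0$ and $\fd(\alpha)+\fz(\ad)-\alpha\au=0$, absorb most of the resulting terms: the $\nuu\wedge\nud$-component forces $\lambda=\alpha-\kappa$, while the $\nuz\wedge\nuu$ and $\nuz\wedge\nud$ components collapse to $\fu(\kappa)=0$ and $\fd(\kappa)=0$. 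In the left-invariant setting relevant to the paper all structure coefficients (and hence $\kappa$) are constant, so these compatibility conditions hold trivially and the Poincar\'e lemma produces $\theta$ locally. Finally, the defining formula for $\kappa$ evaluated in the normalized frame with $\au=\ad=0$ gives $\kappa=\tfrac12(\atd-\aqu)=\alpha$, which identifies the auxiliary parameter $\alpha$ with the invariant $\kappa$ and concludes the argument.
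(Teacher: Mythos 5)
The paper does not actually prove Proposition~\ref{prop:2.4}: it is recalled from \cite{AgrachevBarilari12} without argument, so your write-up supplies a proof where the paper offers none. Your route is the standard one and is essentially correct: the traceless symmetric matrix of $\{h,h_0\}$ has determinant $-\chi^2$, so $\chi=0$ forces $\atu=\aqd=0$ and $\atd=-\aqu=:\alpha$; the rotation identity $[\tilde\fd,\tilde\fu]=[\fd,\fu]+\fu(\theta)\fu+\fd(\theta)\fd$ is right; the Pfaffian system $\fu(\theta)=-\au$, $\fd(\theta)=-\ad$ with $\fz(\theta)=\alpha-\kappa$ is exactly what closes $\ud\theta$, and the residual integrability conditions do collapse, via the Jacobi identities, to $\fu(\kappa)=\fd(\kappa)=0$.

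Two remarks. First, your closing sentence is imprecise: the rotation does change the coefficient of $[\fu,\fz]$, from $\alpha$ to $\alpha-\fz(\theta)$, so in general $\alpha\neq\kappa$. What saves you is precisely your own choice $\fz(\theta)=\alpha-\kappa$, which makes the new coefficient equal to $\alpha-(\alpha-\kappa)=\kappa$; equivalently, since $\chi$ is frame-independent, the skew form $[\tilde\fu,\fz]=\tilde\alpha\,\tilde\fd$, $[\tilde\fd,\fz]=-\tilde\alpha\,\tilde\fu$ persists in the new frame, and the formula for $\kappa$ evaluated there (where $\aut=\adt=0$) forces $\tilde\alpha=\kappa$. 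Either phrasing is fine, but as written the identification ``$\alpha=\kappa$'' is not what happens. Second, you have correctly isolated the real content of the statement: the conditions $\fu(\kappa)=\fd(\kappa)=0$ are \emph{not} a consequence of $\chi\equiv 0$ for a general contact sub-Riemannian manifold (circle bundles over a surface of non-constant Gaussian curvature have $\chi\equiv 0$ and $\kappa$ equal to the pulled-back curvature, and the Jacobi identity applied to the claimed normal form forces $\fu(\kappa)=\fd(\kappa)=0$). So the proposition as stated for arbitrary contact manifolds implicitly requires $\kappa$ to be horizontally constant; your restriction to the left-invariant setting, which is all this paper ever uses, is therefore not a cosmetic shortcut but a genuinely needed hypothesis, and it is to your credit that the proof makes this visible.
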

		\begin{remark}
			\label{rem:2.5}
			The meaning of the canonical frames is to be intended as follows: we may describe our sub-Riemannian structure as an $SO(2)$-principal
			bundle over the base manifold $M$. The $SO(2)$-action on the fiber over $q$ is given through a rotation of an orthonormal frame for
			$\Delta_q$. A canonical frame is therefore the choice of a preferred section on this bundle.
		\end{remark}
		\begin{remark}
			\label{rem:2.6}
			Under the choice of the canonical frame, it turns out that
			\[
			\chi=\frac{|\atd+\aqu|}{2};
			\]
			by possibly changing the orientation on the contact planes, we can always assume that $\atd+\aqu\geq 0$, so that the absolute
			value can be avoided in the previous formula. On the other hand, it is easy to see that $\kappa$ remains unaffected by this operation.
		\end{remark}
		
		To conclude this introduction on contact structures, we recall the possibility of defining an intrinsic volume form on them. This form
		is called \emph{Popp's volume} and in terms of a (local) coframe it may be expressed as 
		\[
		\Vol=\nuz\wedge\nuu\wedge\nud. 
		\]
		For an extensive discussion
		on this subject we refer the reader to \cite{BarilariRizzi13}.
		
		\section{The Fefferman Metric}
		\label{S_Fefferman}
		The Fefferman ambient metric has been introduced by Fefferman and Graham in \cite{FeffermanGraham12}. The approach we will follow
		in this paper is however due to Farris \cite{Farris86} and Lee \cite{Lee86} who developed an intrinsic construction which works also
		for abstract (i.e. not embedded) CR manifolds; we also recommend \cite{CastroMontgomery08} for the application of this construction
		to the present situation.\newline Assume we are given a manifold $M$ of dimension $n$. Then a CR structure modelled
		on $M$ is the datum of a subbundle $L$ of the complexified tangent bundle $\C TM\doteq \C\otimes_\R TM$ which is
		\begin{itemize}
			\item [$i)$] Formally integrable: $[L,L]\subseteq L$,
			\item [$ii)$] $\bar{L}\cap L=\{0\}$.
		\end{itemize}
		\begin{definition}
			\label{def:3.1}
			The codimension of the CR structure is $k\doteq n-(2\dim L)$ and in particular if $k=1$ we say that our structure is of
			\emph{hypersurface type}.
		\end{definition} 
		\begin{remark}
			\label{rem:3.2}
			If the underlying manifold is three dimensional, a CR structure on $M$ is of hypersurface type.
		\end{remark}
		\begin{definition}
			\label{def:3.3}
			A strictly pseudoconvex CR structure on a three dimensional manifold $M$ consists of a contact structure $\Delta$ defined on
			$M$ together with an almost complex map $J$ defined on the contact planes $\{\Delta_q\}_{q\in M}.$
		\end{definition}
		An equivalent definition, which is however more practical for our future exposition, is the following:
		\begin{definition}
			\label{def:3.4}
			A strictly pseudoconvex CR structure on a three dimensional manifold $M$ consists of an oriented contact structure $\Delta$ defined
			on $M$ together with a conformal class of metrics defined on the contact planes $\{\Delta_q\}_{q\in M}$
		\end{definition}
		Too see the equivalence between these two definitions we can proceed as follows: starting from the first definition, one may
		choose a contact form $\theta$ for the contact structure by the requirement $\Delta=\ker\theta$ and then construct the associated
		Levi form by the position
		\begin{equation}
			\label{eqn:3.1}
			L_\theta(v,w)\doteq \ud\theta(v,Jw),\quad\forall v,w\in\Delta.
		\end{equation}
		The condition of nondegeneracy of $\ud\theta$ restricted to $\Delta$ implies that the Levi form is either positive or negative
		definite. If the Levi form is negative definite, then we may work with $-\theta$ instead of $\theta$. We insist therefore that
		the choice of the contact form and the almost complex map $J$ is made so that the Levi form is positive definite, which is the
		same as to require that the orientation induced on the contact planes $\Delta_q$ by $\theta$ and $J$ coincide. We notice that if
		we multiply the contact form by a factor $e^{2\vphi},\;\vphi:M\to\R$, then the Levi form rescales as
		\begin{equation}
			\label{eqn:3.2}
			L_{e^{2\vphi}\theta}=e^{2\vphi}L_\theta,
		\end{equation}
		that is to say that the definition of the Levi form depends only on the conformal class of the metric and not on the (oriented)
		contact form $\theta$.\newline To go the other way round, we build the almost complex map $J$ by declaring how it behaves on an 
		orthonormal frame. Then we may choose two orthonormal vectors with respect to a particular metric in the conformal class defined
		on the contact planes $\Delta_q$. Call these two vectors $\fu$ and $\fd$ and say that
		\begin{equation}
			\label{eqn:3.3}
			J(f_1)=f_2,\; J(f_2)=-f_1.
		\end{equation}
		We have therefore seen that it is possible to define a CR structure either through the choice of an oriented 
		contact form, up to a conformal scale factor, or through the definition of an almost complex map $J$ on the contact planes, also
		up to a conformal choice; these two aspects are linked one to the other through the Levi form equation \eqref{eqn:3.1}.\newline
		Another way to see a CR structure in three dimensions is the following one:
		\begin{definition}
			\label{def:3.5}
			A strictly pseudoconvex CR structure defined on a three dimensional manifold $M$ is the datum of a complex line field, that is a
			rank one subbundle of $\C TM$ which is nowhere real.
		\end{definition}
		We extend now the almost complex map $J$ to the whole of $\C TM$ by complex linearity. Then we may define the \emph{holomorphic}
		directions as the eigenspace relative to the eigenvalue $i$ of the extended map. In dimension three this eigenspace is spanned
		just by a single complex vector field, which is easily seen to be $f_1-if_2$. Analogously, we define the anti-holomorphic directions
		as the eigenspace relative to $-i$, which is spanned by $f_1+if_2$. Dually, one may also define holomorphic and anti-holomorphic
		forms on $\C T^*M$: we declare a one form $\lambda$ to be holomorphic if it annihilates the anti-holomorphic directions, i.e. if
		and only if $\lambda(f_1+if_2)=0$. Anti-holomorphic forms are defined in the obvious way.\newline The almost complex map $J$ induces
		therefore a splitting in the space of complex differential forms; we may speak of a $(p,q)$ form, as to indicate a complex form 
		which has degree $p$ holomorphic part and degree $q$ anti-holomorphic part.\newline In the present situation we may choose an orthonormal frame
		for $\Delta$, say $\{f_1,f_2\}$ and we build $J$ as indicated in \eqref{eqn:3.3}. Then the complex holomorphic one forms are spanned
		over $\C$ by $\nuz$ and $\nuu+i\nud$.\newline We extend the Levi form to the whole complexified tangent space $\C TM$ by insisting that
		\[
		L_{\nuz}(\fz,v)=0,\;\forall v\in TM; 
		\]
		by an abuse of notation we will continue to write $L_{\nuz}$ for this extended form. We warn the reader that all the results of this section are local; this will be always implicitly assumed, but not constantly repeated so to keep the exposition fluid.\newline 
		Let $\pi:Z\to M$ be a circle bundle over $M$ and let $\sigma$ be a one form on $Z$ such that it is non zero on vertical vectors,
		i.e. $\sigma\big|_{\ker(\ud\pi)}$ is non zero. Then we try to build a Lorentz pseudo-metric $g=g_{\nuz}$ on $Z$ in such a way that
		\begin{equation}
			\label{eqn:3.4}
			g_{\nuz}\doteq \pi^* L_{\nuz}+4\left(\pi^*\nuz\right)\odot\sigma
		\end{equation}
		and so that a conformal rescaling of the contact form $\nuz\mapsto e^{2\vphi}\nuz$ induces a conformal rescaling of the pseudo-metric
		\begin{equation}
			\label{eqn:3.5}
			g_{e^{2\vphi}\nuz}=e^{2\vphi}g_{\nuz}.
		\end{equation}
		\begin{remark}
			\label{rem:3.6}
			The convention here is that $\odot$ denotes the symmetric product of one forms; namely, for any pair
			of one forms $\nu,\eta$, we denote $\nu\odot\eta\doteq \frac12(\nu\otimes\eta+\eta\otimes\nu)$.
		\end{remark}
		The holomorphic $(2,0)$-forms on $M$, which are spanned over $\C$ by $\nuz\wedge(\nuu+i\nud)$, if considered pointwise, form a
		complex line bunde which we will denote by $K$; if we forget about the zero section, this is commonly referred to as the \emph{canonical bundle}. The circle
		bundle over $M$ will be a ray projectivization of the canonical bundle, that is 
		\begin{equation}
			\label{eqn:3.6}
			Z\doteq (K\setminus\{\text{zero section}\})/\R^+.
		\end{equation}
		We now want to build the one form $\sigma$ on $Z$ in a canonical way, i.e.
		just dependent on the choice of the contact form $\nuz$.
		\begin{definition}[Volume normalization equation]
			\label{def:3.7}
			Fix a contact form $\nuz$ on $M$. Then the volume normalization equation reads
			\begin{equation}
				\label{eqn:3.7}
				i\nuz\wedge\iota_{\fz}\zeta\wedge\iota_{\fz}\bar\zeta=\nuz\wedge\ud\nuz=\nuz\wedge\nuu\wedge\nud.
			\end{equation}
		\end{definition} 
		We look for holomorphic solutions to \eqref{eqn:3.7}: in the right hand side of this equation there is the intrinsic volume form associated to a contact sub-Riemannian structure,
		that is Popp's volume; in the left hand side the two form $\zeta$ is the unknown to be determined. Notice that the equation depends only
		on the conformal class of metrics, and that it is quadratic in $\zeta$, which means that a conformal multiplication $\zeta\mapsto f\zeta,\; f>0$,
		implies that the equation is rescaled by a factor $|f|^2$. It is then evident that the volume normalization equation is defined
		up to complex unit multiple
		\[
		\zeta\mapsto e^{i\gamma}\zeta.
		\]
		To put it in other words, equation \eqref{eqn:3.7} defines in fact a section
		\[
		s_{\nuz}:Z\to K,
		\]
		since once we have decided the complex phase of $\zeta$, then it uniquely determines the real scale factor.\newline Fix now a
		solution to the volume normalization equation, i.e. a smoothly varying family of solutions $\zeta_0:M\to K$. This determines a trivialization of $Z$, since once $\zeta_0$ is chosen, then we may identify any point $z\in Z$ as a pair $(q,\gamma)$, with
		$q=\pi(z)$, and 
		\begin{equation}
			\label{eqn:3.8}
			s_{\nuz}(z)=e^{i\gamma}\zeta_0(\pi(z)).
		\end{equation}
		On $Z\cong M\times S^1$ we may define the following two form:
		\begin{equation}
			\label{eqn:3.9}
			\zeta(q,\gamma)=e^{i\gamma}\zeta_0(q);
		\end{equation}
		what we have to check is that it depends just on the choice of the contact form $\nuz$ and that is therefore, up to this choice,
		intrinsic to the bundle $Z$.\newline Now $K$, as any total space constructed as a bundle of differential two-forms on $M$, possesses a tautological
		two form $\Sigma$, which may be described as follows: choose any point in $K$, say $k=(q,\eta)$, where $q\in M$ and
		$\eta\in \bigwedge^{(2,0)}T_qM$ is a holomorphic $(2,0)$ form; then
		\begin{equation}
			\label{eqn:3.10}
			\Sigma(q,\eta)\doteq \pi^*_q(\eta);
		\end{equation}
		by means of its reproducing property, using $s_{\nuz}$ we may pull back the tautological form $\Sigma$ back on $Z$ and define therefore
		\begin{equation}
			\label{eqn:3.11}
			\zeta\doteq s_{\nuz}^*\Sigma.
		\end{equation}
		The global trivialization on $Z$ is induced by $\zeta_0$, and depends only on the choice of the contact form $\nuz$;
		consequently we may express $\zeta$ as 
		\begin{equation}
			\label{eqn:3.12}
			\zeta=\frac{1}{\sqrt 2}e^{i\gamma}\left(\nuz\wedge(\nuu+i\nud)\right),
		\end{equation}
		where the factor $\frac{1}{\sqrt 2}$ comes from the volume normalization equation \eqref{eqn:3.7}.\newline The following proposition is the fundamental
		step in order to build the one form $\sigma$ needed in the definition of the Fefferman metric \eqref{eqn:3.4}. Since its proof is quite technical
		we refer the reader to the paper of Lee \cite{Lee86}.
		\begin{proposition}
			\label{prop:3.8}
			Fix the contact form $\nuz$ for the contact manifold $M$. Let $\fz$ be the associated Reeb vector field and let $\zeta$ be the 
			two form on $Z$ constructed as in \eqref{eqn:3.11}. Then
			\begin{itemize}
				\item [i)] There exists a complex valued one form $\eta$ on $Z$, uniquely determined by the conditions
				\begin{align}
					\label{eqn:3.13}
					&\zeta=\nuz\wedge\eta\footnotemark\crcr
					&\iota_v\eta=0,\quad \forall v\in TZ\,:\,\pi_*(v)=f_0.
				\end{align}
				\footnotetext{Here we are implicitly identifying $\nuz$ with $\pi^*(\nuz)$ via the projection $\pi:Z\to M$.}
				\item [ii)] With $\eta$ chosen as in i) there exists a unique real valued one form $\sigma$ on $Z$ determined by the equations
				\begin{align}
					\label{eqn:3.14}
					&\ud\zeta=3i\sigma\wedge\zeta,\\
					\label{eqn:3.15}
					&\sigma\wedge\ud\eta\wedge\bar\eta=\Tr(\ud\sigma)i\sigma\wedge\nuz\wedge\eta\wedge\bar\eta.
				\end{align}
				Here the meaning of $\Tr$ is to be intended as follows: any solution $\sigma$ to \eqref{eqn:3.14} has the property that $\ud\sigma$
				is the pullback of a two form on $M$, which by an abuse of notation we still denote by $\ud\sigma$. There exist a one 
				form $\beta$ on $M$ and a real valued function $f$ on $M$ for which the following decomposition holds true
				\[
				\ud\sigma=f\ud\nuz+\beta\wedge\nuz.
				\]
				We therefore set $\Tr(\ud\sigma)=f$.
				\item[iii)] The form $\sigma$ uniquely determined by the two points above is the form which is needed in the definition of
				the Fefferman metric \eqref{eqn:3.4}.
			\end{itemize}
		\end{proposition}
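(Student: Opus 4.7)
The plan is to work throughout in the global trivialization $Z\cong M\times S^1$ induced by $\zeta_0$, so that every unknown becomes a combination of functions on $M\times S^1$ and coframe forms pulled back from $M$; all three assertions then reduce to linear algebra. For part (i), $\zeta$ is given explicitly by \eqref{eqn:3.12}, and the natural candidate
\[
\eta\doteq\frac{1}{\sqrt{2}}e^{i\gamma}(\nuu+i\nud)
\]
trivially satisfies $\zeta=\nuz\wedge\eta$; since $\nuu+i\nud$ is pulled back from $M$ and $\nuu(\fz)=\nud(\fz)=0$, it annihilates any lift of $\fz$ to $Z$, yielding the second condition of \eqref{eqn:3.13}. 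Uniqueness is immediate: any other candidate $\eta'$ satisfies $\nuz\wedge(\eta-\eta')=0$, hence $\eta-\eta'=g\,\nuz$ for some complex function $g$, and the contraction condition applied to a lift of $\fz$ forces $g=0$.

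For part (ii), I would tackle \eqref{eqn:3.14} first. Using the ansatz $\sigma=a\,\pi^*\nuz+b\,\pi^*\nuu+c\,\pi^*\nud+e\,d\gamma$ with real coefficients and computing $d\zeta$ via the structural equations \eqref{eqn:2.3}, coefficient matching yields $e=\tfrac{1}{3}$, $b=-\tfrac{\au}{3}$, $c=-\tfrac{\ad}{3}$, while $a$ is left undetermined because $\pi^*\nuz\wedge\zeta=0$. To fix $a$ via \eqref{eqn:3.15} one first has to make sense of $\Tr(d\sigma)$, which requires showing that $d\sigma$ descends to a two form on $M$. This is the main technical obstacle; it follows from differentiating \eqref{eqn:3.14}, since combining $d^{2}\zeta=0$ with $\sigma\wedge\sigma=0$ produces the algebraic constraint $d\sigma\wedge\zeta=0$, which together with the explicit form of the $a$-independent part of $d\sigma$ rules out surviving $d\gamma$-components and produces the decomposition $d\sigma=f\,d\nuz+\beta\wedge\nuz$ anticipated in the proposition.

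With the trace well defined, substituting into \eqref{eqn:3.15} and expanding by means of $\eta\wedge\bar\eta=-i\,\nuu\wedge\nud$ (so that $i\sigma\wedge\nuz\wedge\eta\wedge\bar\eta$ is a nonzero multiple of the volume form $d\gamma\wedge\nuz\wedge\nuu\wedge\nud$) reduces \eqref{eqn:3.15} to a single real linear equation in the sole remaining unknown $a$, whose leading coefficient is nonzero because shifting $a$ by a constant shifts $\Tr(d\sigma)$ by the same amount up to a nonvanishing factor. This pins down $\sigma$ uniquely. Finally, part (iii) amounts to the compatibility check that the $\sigma$ so produced is the correct one for the Fefferman metric \eqref{eqn:3.4}: tracking the rescaling $\nuz\mapsto e^{2\vphi}\nuz$ through the volume normalization \eqref{eqn:3.7} and through the two defining equations for $\sigma$ confirms the conformal covariance \eqref{eqn:3.5}, which is precisely what singles these normalizations out.
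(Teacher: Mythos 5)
The paper does not actually prove Proposition \ref{prop:3.8}: it defers entirely to Lee \cite{Lee86} and only records the outcome of the computation in \eqref{eqn:3.16}--\eqref{eqn:3.18}. Your direct verification in the trivialization $Z\cong M\times S^1$ is therefore a genuinely different and more self-contained route, and the parts you carry out are correct: the candidate $\eta=\tfrac{1}{\sqrt2}e^{i\gamma}(\nuu+i\nud)$ and the uniqueness argument in (i) are fine, and the coefficient matching for \eqref{eqn:3.14} correctly forces $\sigma=\tfrac{\ud\gamma}{3}-\tfrac{\au}{3}\nuu-\tfrac{\ad}{3}\nud+a\,\nuz$ with only $a$ free. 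Since $\eta\wedge\bar\eta=-i\,\nuu\wedge\nud$ and only the $\ud\gamma$-part of $\sigma$ survives on the right of \eqref{eqn:3.15}, that equation is affine in $a$ with slope $-\tfrac43\neq0$, which reproduces \eqref{eqn:3.18} and gives uniqueness. (If you expand this step, note that the left side of \eqref{eqn:3.15} is automatically real because $\atu+\aqd=0$, a consequence of $\ud(\nuu\wedge\nud)=\ud^2\nuz=0$.)

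The one step that does not work as you state it is the descent of $\ud\sigma$ to $M$. Differentiating \eqref{eqn:3.14} gives $\ud\sigma\wedge\zeta=0$, which kills the $\ud\gamma\wedge\nuu$ and $\ud\gamma\wedge\nud$ components of $\ud\sigma$ but says nothing about the $\ud\gamma\wedge\nuz$ component, which is exactly $(\partial_\gamma a)\,\ud\gamma\wedge\nuz$; a solution of \eqref{eqn:3.14} with $\gamma$-dependent $a$ therefore does \emph{not} have $\ud\sigma$ pulled back from $M$. The fix is cheap: the trace only reads off the coefficient of $\ud\nuz=\nuu\wedge\nud$ modulo terms of the form $\beta\wedge\nuz$, so the offending component is irrelevant to $\Tr(\ud\sigma)$; once \eqref{eqn:3.15} is solved, $a$ is expressed through the structural functions $c_{ij}^k$ on $M$, hence is $\gamma$-independent, and $\ud\sigma$ descends a posteriori. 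Finally, be upfront that part (iii) --- the conformal covariance \eqref{eqn:3.5} of the resulting metric, which is the real content justifying these normalizations --- is only gestured at in your last sentence: it requires the transformation rules of Lemmas \ref{lemma:4.3} and \ref{lemma:4.5} (or Lee's argument) and is precisely the technical portion the paper chose to omit.
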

		Specializing the construction pointed out in the above proposition to the present situation, what we get is
		\begin{align}
			\label{eqn:3.16}
			&\sigma=\frac{\ud\gamma}{3}-\frac{\au}{3}\nuu-\frac{\ad}{3}\nud+f\nuz,\\
			\label{eqn:3.17}
			&\Tr(\ud\sigma)=\frac{\fd(\au)}{3}-\frac{\fu(\ad)}{3}-\frac{(\au)^2}{3}-\frac{(\ad)^2}{3}+f,\\
			\label{eqn:3.18}
			&f=\frac34\left(\frac{\atd-\aqu}{6}-\frac{\fd(\au)-\fu(\ad)-(\au)^2-(\ad)^2}{9}\right);
		\end{align}
		if we substitute the expression for $f$ in \eqref{eqn:3.17}, it turns out that
		\begin{equation}
			\label{eqn:3.19}
			\Tr(\ud\sigma)=\frac14\left(\fd(\au)-\fu(\ad)-(\au)^2-(\ad)^2+\frac{\atd-\aqu}{2}\right)=\frac14\kappa.
		\end{equation}
		We are finally in the position to give the explicit form of the Fefferman metric: under the choice of the orthonormal frame 
		$\{\fz,\fu,\fd,\finf\}$ dual to the orthonormal basis of $T^*Z$ $\{\nuz,\nuu,\nud,\nuinf=\ud\gamma\}$ its matrix is
		\begin{equation}
			\label{eqn:3.20}
			g_{ij}=\left(
			\begin{array}{cccc}
				\frac{\atd-\aqu}{2}+\frac{(\au)^2+(\ad)^2+\fu(\ad)-\fd(\au)}{3} & -\frac{2\au}{3} & -\frac{2\ad}{3} & \frac23 \\
				\\
				-\frac{2\au}{3} & 1 & 0 & 0\\
				\\
				-\frac{2\ad}{3} & 0 & 1 & 0\\
				\\
				\frac23 & 0 & 0 & 0
			\end{array}
			\right)
		\end{equation}
		For future purposes, we also write the matrix expression for the inverse metric
		\begin{equation}
			\label{eqn:3.21}
			g^{ij}=\left(
			\begin{array}{cccc}
				0 & 0 & 0 & \frac32\\
				\\
				0 & 1 & 0 & \au\\
				\\
				0 & 0 & 1 & \ad\\
				\\
				\frac32 & \au & \ad & -\frac94\left(-\frac{(\au)^2+(\ad)^2}{9}+\frac{\atd-\aqu}{2}+\frac{\fu(\ad)-\fd(\au)}{3}\right)
			\end{array}
			\right)
		\end{equation}
		\begin{remark}
			\label{rem:3.10}
			We emphasize the fact that the Fefferman metric constructed in this section is independent on the choice of the orthonormal frame.
		\end{remark}
		\begin{remark}
			\label{rem:3.11}
			Here and in what follows we adopt the following convention: all the structural coefficients which appear in the definition of
			the Fefferman metric are to be intended as lifts of the original structural coefficients on $M$; their extension to the product
			manifold $Z\cong M\times S^1$ is constant on $S^1$.
		\end{remark}
		
		\section{Conformal invariants}
		\label{S_Conformal_Invariants}
		
		The aim of this section is to determine the scalar conformal differential invariants associated to a three dimensional contact sub-Riemannian manifold.
		Heuristically speaking it is natural to expect that there will be just one of them. Any such structure is indeed specified by giving two vector fields, that is by
		six scalar equations; on the other hand five of these can be normalized: three by the choice of a system of coordinates,
		one by the choice of a rotation $\theta$ of the frame and one by the rescaling function $\vphi$.\newline Since the conformal invariant
		has to be independent on these choices, we conclude that it must be defined by a single scalar function, i.e. it is unique.\newline For a pseudo-Riemannian
		metric, the conformal information is contained in its Weyl tensor; we start by constructing the Levi Civita connection for the
		Fefferman metric via the usual Koszul formula:
		\begin{align*}
			&\nabla_{\fz}\fz=\frac{1}{12}\left(-8\au\atu-8\ad\atd-8\fz(\au)-2\fu(\fu(\ad))+2\fu(\fd(\au))\right.\\
			&\left.-4\au\fu(\au)-4\ad\fu(\ad)-3\fu(\atd)+3\fu(\aqu)\right)\fu\\
			&+\frac{1}{12}\left(-8\au\aqu-8\ad\aqd-8\fz(\ad)-2\fd(\fu(\ad))+2\fd(\fd(\au))\right.\\
			&\left.-4\au\fd(\au)-4\ad\fd(\ad)-3\fd(\atd)+3\fd(\aqu)\right)\fd\\
			&+\frac{1}{24}\left(-16(\au)^2\atu-16\au\ad\atd-16\au\ad\aqu-16(\ad)^2\aqd\right.\\
			&\left.+6\fz(\fu(\ad))-6\fz(\fd(\au))-4\au\fz(\au)-4\ad\fz(\ad)+9\fz(\atd)\right.\\
			&\left.-9\fz(\aqu)-4\au\fu(\fu(\ad))+4\au\fu(\fd(\au))-8(\au)^2\fu(\au)\right.\\
			&\left.-8\au\ad\fu(\ad)-8\au\ad\fd(\au)\right.\\
			&\left.-6\au\fu(\atd)+6\au\fu(\aqu)-4\ad\fd(\fu(\ad))+4\ad\fd(\fd(\au))\right.\\
			&\left.-8(\ad)^2\fd(\ad)-6\ad\fd(\atd)+6\ad\fd(\aqu)\right)\finf,\\
			&\nabla_{\fz}\fu=\frac{1}{12}\left(-2(\au)^2-2(\ad)^2-3\atd+3\aqu-2\fu(\ad)+2\fd(\au)\right)\fd\\
			&+\frac{1}{24}\left(-4(\au)^2\ad-4(\ad)^3+24\au\atu+18\ad\atd+6\ad\aqu+6\fu(\fu(\ad))\right.\\
			&\left.-6\fu(\fd(\au))+12\au\fu(\au)+8\ad\fu(\ad)+9\fu(\atd)-9\fu(\aqu)\right.\\
			&\left.+4\ad\fd(\au)\right)\finf,\\
			&\nabla_{\fz}\fd=\frac{1}{12}\left(2(\au)^2+2(\ad)^2+3\atd-3\aqu+2\fu(\ad)-2\fd(\au)\right)\fu\\
			&+\frac{1}{24}\left(4(\au)^3+4\au(\ad)^2+6\au\atd+18\au\aqu+24\ad\aqd+4\au\fu(\ad)\right.\\
			&\left.+6\fd(\fu(\ad))-6\fd(\fd(\au))+8\au\fd(\au)+12\ad\fd(\ad)+9\fd(\atd)\right.\\
			&\left.-9\fd(\aqu)\right)\finf,\\
		\end{align*}
		\begin{align*}
			&\nabla_{\fz}\finf=0,\\
			&\nabla_{\fu}\fu=\frac{\au}{3}\fd+\frac16\left(2\au\ad-9\atu-6\fu(\au)\right)\finf,\\
			&\nabla_{\fu}\fd=-\frac12\fz-\frac{2\au}{3}\fu-\frac{\ad}{3}\fd\\
			&+\frac{1}{12}\left(-2(\au)^2+2(\ad)^2-9\atd-9\aqu-6\fu(\ad)-6\fd(\au)\right)\finf,\\               
			&\nabla_{\fu}\finf=\frac13\fd+\frac{\ad}{3}\finf,\\  
			&\nabla_{\fd}\fd=-\frac{\ad}{3}\fu+\frac16\left(-2\au\ad-9\aqd-6\fd(\ad)\right)\finf,\\
			&\nabla_{\fd}\finf=-\frac13\fu-\frac{\au}{3}\finf. 
		\end{align*}
		The remaining covariant derivatives are filled by means of the commutator rules imposed by the structural equations \eqref{eqn:2.4} in $M$, extended
		to $Z$ by declaring that $\finf$ commutes with all the other vector fields: 
		\[
		[\finf,f_i]=0,\: i=0,1,2
		\]
		\begin{remark}
			\label{rem:4.1}
			In all the subsequent calculations one has to extensively use the two following nontrivial relations between the structural coefficients:
			differentiating the second and the third relation in \eqref{eqn:2.3} we see that
			\begin{eqnarray}
				\label{eqn:4.2}
				\ad\atu-\au\atd+\fz(\ad)-\fu(\aqd)+\fd(\atd)&=&0\\
				\label{eqn:4.3}
				\ad\aqu-\au\aqd-\fz(\au)+\fu(\aqu)-\fd(\atu)&=&0.
			\end{eqnarray}
		\end{remark}
		Starting from the Riemann tensor
		\begin{align}
			\label{eqn:4.4}
			&R(X,Y)W=[\nabla_X,\nabla_Y]W-\nabla_{[X,Y]}W,\nonumber\\
			&(R^i)_{jkl}\doteq\langle R(f_k,f_l)f_j,\nu^i\rangle,
		\end{align}
		we compute the Weyl tensor associated to the Fefferman metric
		\begin{equation}
			\label{eqn:4.5}
			W_{ijkl}=R_{ijkl}-\frac12\left(g_{ik}R_{jl}-g_{il}R_{jk}-g_{jk}R_{il}+g_{jl}R_{ik}\right)+\frac{R}{6}\left(g_{ik}g_{jl}-g_{il}g_{jk}\right),
		\end{equation}
		where $R_{ij}$ is the Ricci tensor and $R$ is the scalar curvature, that is
		\[
		R_{ij}\doteq R^k_{ikj},\quad  R\doteq R^i_i.
		\]
		Notice the following interesting relation
		\[
		R=\frac32\left(\fd(\au)-\fu(\ad)-(\au)^2-(\ad)^2+\frac12(\atd-\aqu)\right)=\frac32\kappa.
		\]
		\begin{remark}
			\label{rem:4.2}
			Also $\chi$ can be recovered considering complete contractions of the Riemann tensor: in particular
			it turns out that
			\[
			\chi^2=\frac{9}{16}\left\|\nabla_{\finf} R_{ijkl}\right\|^2.
			\]
		\end{remark}  
		There are just two nonzero linearly independent entries in the Weyl tensor, and each of them rescales by a
		factor $e^{-4\vphi}$; their ratio is the desired conformal invariant associated to the contact three dimensional structure.
		\begin{align*}
			&\alpha=-\frac{(\au)^2\atd}{12}+\frac{(\ad)^2\atd}{12}-\frac{3(\atd)^2}{8}+\frac{(\au)^2\aqu}{12}-\frac{(\ad)^2\aqu}{12}+\frac{3(\aqu)^2}{8}\\
			&-\frac{\fz(\fu(\au))}{3}+\frac{\fz(\fd(\ad))}{3}+\frac{\ad\fz(\au)}{6}+\frac{\au\fz(\ad)}{6}-\frac{\fz(\atu)}{2}+\frac{\fz(\aqd)}{2}\\
			&-\frac{\fu(\fu(\fu(\ad)))}{12}+\frac{\fu(\fu(\fd(\au)))}{12}-\frac{\au\fu(\fu(\au))}{6}-\frac{\ad\fu(\fu(\ad))}{12}\\
			&+\frac{\fu(\fu(\aqu))}{8}-\frac{\ad\fu(\fd(\au))}{12}+\frac{\au\ad\fu(\au)}{6}-\frac{2\atu\fu(\au)}{3}-\frac{\fu(\au)^2}{6}\\
			&+\frac{(\ad)^2\fu(\ad)}{6}-\frac{7\atd\fu(\ad)}{12}+\frac{\aqu\fu(\ad)}{12}-\frac{\fu(\ad)^2}{6}-\frac{\au\fu(\atu)}{3}\\
			&+\frac{\ad\fu(\aqu)}{24}+\frac{\au\fu(\aqd)}{6}+\frac{\au\fd(\fu(\ad))}{12}+\frac{\fd(\fd(\fu(\ad)))}{12}\\
			&+\frac{\au\fd(\fd(\au))}{12}+\frac{\ad\fd(\fd(\ad))}{6}+\frac{\fd(\fd(\atd))}{8}-\frac{\fd(\fd(\aqu))}{8}\\
			&-\frac{\atd\fd(\au)}{12}+\frac{7\aqu\fd(\au)}{12}+\frac{\fd(\au)^2}{6}+\frac{\au\ad\fd(\ad)}{6}+\frac{2\aqd\fd(\ad)}{3}\\
			&-\frac{\fu(\fu(\atd))}{8}-\frac{5\ad\fu(\atd)}{24}-\frac{\fd(\fd(\fd(\au)))}{12}+\frac{(\au)^2\fd(\au)}{6}\\
			&-\frac{\ad\fd(\atu)}{6}-\frac{\au\fd(\atd)}{24}+\frac{5\au\fd(\aqu)}{24}+\frac{\ad\fd(\aqd)}{3}+\frac{\fd(\ad)^2}{6},\\
			\\
			\\
			&\beta=-\frac{\au\ad\atd}{6}-\frac{\atu\atd}{8}+\frac{\au\ad\aqu}{6}-\frac{7\atu\aqu}{8}-\frac{7\atd\aqd}{8}-\frac{\aqu\aqd}{8}\\
			&-\frac{\fz(\fd(\au))}{3}-\frac{\au\fz(\au)}{6}+\frac{\ad\fz(\ad)}{6}-\frac{\fz(\atd)}{2}-\frac{\fu(\au)\fd(\au)}{3}\\
			&-\frac{\fu(\fd(\fu(\ad)))}{12}+\frac{\fu(\fd(\fd(\au)))}{12}-\frac{\au\fu(\fd(\au))}{12}-\frac{\ad\fu(\fd(\ad))}{6}\\
			&+\frac{\fu(\fd(\aqu))}{8}-\frac{(\au)^2\fu(\au)}{6}-\frac{2\aqu\fu(\au)}{3}-\frac{\au\ad\fu(\ad)}{6}-\frac{\atu\fu(\ad)}{12}\\
			&-\frac{7\aqd\fu(\ad)}{12}-\frac{\au\fu(\atd)}{8}-\frac{3\au\fu(\aqu)}{8}-\frac{\ad\fu(\aqd)}{6}-\frac{\fd(\fu(\fu(\ad)))}{12}\\
			&+\frac{\fd(\fu(\fd(\au)))}{12}-\frac{\au\fd(\fu(\au))}{6}-\frac{\ad\fd(\fu(\ad))}{12}-\frac{\fd(\fu(\atd))}{8}
		\end{align*}
		\begin{align*}
			&-\frac{\ad\fd(\fd(\au))}{12}+\frac{\au\ad\fd(\au)}{6}-\frac{7\atu\fd(\au)}{12}-\frac{\aqd\fd(\au)}{12}-\frac{\fz(\aqu)}{2}\\
			&+\frac{(\ad)^2\fd(\ad)}{6}-\frac{2\atd\fd(\ad)}{3}-\frac{\fu(\ad)\fd(\ad)}{3}-\frac{\au\fd(\atu)}{6}-\frac{3\ad\fd(\atd)}{8}\\
			&-\frac{\ad\fd(\aqu)}{8}-\frac{\fz(\fu(\ad))}{3}-\frac{\au\fu(\fu(\ad))}{12}-\frac{\fu(\fd(\atd))}{8}+\frac{\fd(\fu(\aqu))}{8}.
		\end{align*}
		The calculations needed to verify that $\alpha$ and $\beta$ rescale as claimed are long and tedious, nevertheless they
		can be carried out with the aid of \eqref{eqn:4.2}, \eqref{eqn:4.3}, and the two following lemmas:
		\begin{lemma}
			\label{lemma:4.3}
			Let $\bar q\in M$ and let $U$ be a neighborhood of $\bar q$. Let $\vphi:U\to\R$ be any smooth rescaling function for the sub-Riemannian metric i.e. assume
			$\Delta_q=\Span\{e^{-\vphi}\fu(q),e^{-\vphi}\fd(q)\},\:\forall q\in U$. Then the local frame of $T_qM$, $q\in U$ transforms as
			\begin{equation}
				\label{eqn:4.6}
				\left\{
				\begin{aligned}
					\fzp&=e^{-2\vphi}\left(\fz+2\fd(\vphi)\fu-2\fu(\vphi)\fd\right),\crcr
					\fup&=e^{-\vphi}\fu,\crcr
					\fdp&=e^{-\vphi}\fd.
				\end{aligned}
				\right.
			\end{equation}
		\end{lemma}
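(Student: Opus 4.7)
The plan is to read off the new frame from the defining properties of the Reeb vector field and the normalization of the contact form. The scalings of $\fup$ and $\fdp$ are immediate: since the rescaled metric is $e^{2\vphi}\mathbf{g}$, the vectors $e^{-\vphi}\fu$ and $e^{-\vphi}\fd$ have unit $e^{2\vphi}\mathbf{g}$-length and still span $\Delta$, so they form an orthonormal frame for the new structure, yielding $\fup = e^{-\vphi}\fu$ and $\fdp = e^{-\vphi}\fd$.

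The real work is identifying the rescaled Reeb vector $\fzp$. First I would determine the new (normalized) contact form $\nuzp$. Recall from the discussion after Definition~2.2 that the contact form is normalized so that $\ud\omega|_\Delta$ equals the area form on $\Delta$. In the original structure $\ud\nuz = \nuu\wedge\nud$ is the area form for $\{\fu,\fd\}$; under the rescaling, the dual of $\{e^{-\vphi}\fu, e^{-\vphi}\fd\}$ on $\Delta$ is $\{e^{\vphi}\nuu, e^{\vphi}\nud\}$, so the new area form on $\Delta$ is $e^{2\vphi}\nuu\wedge\nud$. Taking $\nuzp = e^{2\vphi}\nuz$ and restricting to $\Delta$ (where $\nuz$ vanishes) gives
\[
\ud\nuzp\big|_\Delta = e^{2\vphi}\ud\nuz\big|_\Delta = e^{2\vphi}\nuu\wedge\nud,
\]
which is the required normalization.

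Now I would expand $\ud\vphi = \fz(\vphi)\nuz + \fu(\vphi)\nuu + \fd(\vphi)\nud$ and compute
\[
\ud\nuzp = e^{2\vphi}\bigl(2\ud\vphi\wedge\nuz + \nuu\wedge\nud\bigr) = e^{2\vphi}\bigl(-2\fu(\vphi)\,\nuz\wedge\nuu - 2\fd(\vphi)\,\nuz\wedge\nud + \nuu\wedge\nud\bigr).
\]
Writing the unknown $\fzp = a\fz + b\fu + c\fd$, the condition $\iota_{\fzp}\nuzp = 1$ fixes $a = e^{-2\vphi}$, while the interior product $\iota_{\fzp}\ud\nuzp = 0$ yields after matching coefficients of $\nuz,\nuu,\nud$ that $b = 2e^{-2\vphi}\fd(\vphi)$ and $c = -2e^{-2\vphi}\fu(\vphi)$ (with the $\nuz$-coefficient vanishing automatically). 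Assembling these three scalars gives exactly the formula in the statement.

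The computation is essentially linear algebra once $\nuzp = e^{2\vphi}\nuz$ has been identified; the one subtle point, and therefore the main thing to get right, is justifying this normalization of the rescaled contact form from the intrinsic condition that $\ud\omega|_\Delta$ coincide with the area form on $\Delta$ (with orientation fixed). Everything else reduces to a one-line solution of a $3\times 3$ linear system arising from the Reeb conditions.
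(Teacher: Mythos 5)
Your proposal is correct and follows essentially the same route as the paper: both determine the rescaled contact form $\nuzp=e^{2\vphi}\nuz$ from the requirement that $\ud\nuzp\big|_\Delta$ be the area form of the new orthonormal frame, compute $\ud\nuzp=e^{2\vphi}\left(2\ud\vphi\wedge\nuz+\ud\nuz\right)$, and then solve the Reeb conditions $\iota_{\fzp}\nuzp=1$, $\iota_{\fzp}\ud\nuzp=0$ for $\fzp$. The coefficient matching and the automatic vanishing of the $\nuz$-component check out, so no gaps.
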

		\begin{proof}
			Let $\nuzp\doteq h\nuz,\: C^\infty(U)\ni h:U\to \R$ denote the rescaled contact form. Since $\ud\nuzp\big|_{\Delta}$ is normalized to be the area form on $\Delta$ and it is uniquely fixed by this
			requirement, we may determine $h$ by the following two facts, namely $\ud\nuzp(e^{-\vphi}\fu,e^{-\vphi}\fd)=1$ and
			\begin{align}
				\label{eqn:4.7}
				\ud\nuzp\big|_{\Delta}&=\ud(h\nuz)\big|_{\Delta}\crcr
				&=\left(\ud(h)\wedge\nuz+h\ud\nuz\right)\big|_{\Delta}\crcr
				&=h\ud\nuz\crcr
				&=h\nuu\wedge\nud,
			\end{align}
			where the equality between the second and the third line follows because $\nuz\big|_{\Delta}\equiv 0$.
			Then $h=e^{2\vphi}$. Moreover, since
			\[
			\begin{split}
			\ud\nuzp&=e^{2\vphi}\left(2\ud(\vphi)\wedge\nuz+\ud\nuz\right)\\
			&=e^{2\vphi}\left(-2\fu(\vphi)\nuz\wedge \nuu-2\fd(\vphi)\nuz\wedge \nud+\nuu\wedge\nud\right),
			\end{split} 
			\]
			using the definition of Reeb vector field $\iota_{\fzp} \nuzp=1$ and $\iota_{\fzp}\ud\nuzp=0$, we find the expression claimed
			in \eqref{eqn:4.6}.
		\end{proof}
		\begin{remark}
			\label{rem:4.4}
			The rescaled dual frame is completed by
			\[
			\begin{aligned}
			\nuup&=e^{\vphi}\nuu-2e^{\vphi}\fd(\vphi)\nuz,\\
			\nudp&=e^{\vphi}\nud+2e^{\vphi}\fu(\vphi)\nuz.
			\end{aligned}
			\]
			As expected,
			\[
			\nuup\wedge\nudp\big|_\Delta=e^{2\vphi}\nuu\wedge\nud=\ud\nuzp\big|_\Delta.
			\]
		\end{remark}
		
		\begin{lemma}
			\label{lemma:4.5}
			Let $\fzp,\fup,\fdp$ be the rescaled frame as in lemma \ref{lemma:4.3}. Then the structural coefficients transform according to:
			\begin{equation}
				\label{eqn:4.8}
				\left\{
				\begin{aligned}
					\aup&=e^{-\vphi}\left(\au-3\fd(\vphi)\right),\\
					\adp&=e^{-\vphi}\left(\ad+3\fu(\vphi)\right),\\
					\atup&=e^{-2\vphi}\left(-4\fu(\vphi)\fd(\vphi)+\atu+2\fu(\fd(\vphi))+2\au\fu(\vphi)+\fz(\vphi)\right),\\
					\atdp&=e^{-2\vphi}\left(4\fu(\vphi)^2+\atd-2\fu(\fu(\vphi))+2\ad\fu(\vphi)\right),\\
					\aqup&=e^{-2\vphi}\left(-4\fd(\vphi)^2+\aqu+2\fd(\fd(\vphi))+2\au\fd(\vphi)\right),\\
					\aqdp&=e^{-2\vphi}\left(4\fu(\vphi)\fd(\vphi)+\aqd+2\ad\fd(\vphi)-2\fd(\fu(\vphi))+\fz(\vphi)\right).
				\end{aligned}
				\right.
			\end{equation}
		\end{lemma}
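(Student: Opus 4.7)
\smallskip

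My plan is to read off the rescaled structural coefficients directly from the three Lie brackets $[f_{2,\vphi}, f_{1,\vphi}]$, $[f_{1,\vphi}, f_{0,\vphi}]$, $[f_{2,\vphi}, f_{0,\vphi}]$, using the explicit expressions for the rescaled frame from Lemma \ref{lemma:4.3} together with the original structural equations \eqref{eqn:2.4}. The basic tool is the Leibniz-type identity $[hX, kY] = hk[X,Y] + h(Xk)Y - k(Yh)X$, applied to every factored bracket; after expansion, one reassembles the result in the rescaled basis $\{\fzp, \fup, \fdp\}$ and compares with the structural equations written for $\contactp$.

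First I would handle the easy case $[\fdp,\fup] = [e^{-\vphi}\fd, e^{-\vphi}\fu]$. A single application of the Leibniz identity and of the first relation of \eqref{eqn:2.4} gives
\[
[\fdp,\fup] = e^{-2\vphi}\fz + e^{-2\vphi}(\au - \fd(\vphi))\fu + e^{-2\vphi}(\ad + \fu(\vphi))\fd.
\]
Using $\fz = e^{2\vphi}\fzp - 2\fd(\vphi)f_1 + 2\fu(\vphi)f_2$ to substitute the $\fz$ term and converting $f_1, f_2$ to $\fup, \fdp$ by the factors $e^{\vphi}$ produces the claimed $\aup$ and $\adp$. This already confirms the first two formulas of \eqref{eqn:4.8} and, as a consistency check, the $\fzp$-coefficient comes out to exactly $1$, matching the $\fz$-coefficient in \eqref{eqn:2.4}.

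For the two harder brackets $[\fup,\fzp]$ and $[\fdp,\fzp]$ I would proceed symmetrically. Writing $\fzp = e^{-2\vphi}(\fz + 2\fd(\vphi)\fu - 2\fu(\vphi)\fd)$ and expanding $[e^{-\vphi}\fu, \fzp]$ by bilinearity yields three kinds of contributions: (i) the bracket $[\fu,\fz]$, which is reduced by \eqref{eqn:2.4} to $\atu\fu + \atd\fd$; (ii) brackets of the form $[\fu,\fu], [\fu,\fd]$ multiplied by $2\fd(\vphi)$ and $-2\fu(\vphi)$, the latter producing a $\fz$-term via the first equation of \eqref{eqn:2.4}; (iii) derivatives falling on the exponential factors and on $\fu(\vphi), \fd(\vphi)$, which produce the iterated derivatives $\fu(\fd(\vphi))$, $\fu(\fu(\vphi))$ and the cross products of first derivatives. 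A key internal consistency point is that the $\fz$-coefficient of the result must vanish (since $[\fu,\fz]$ in the rescaled structure has no $\fzp$-part): the term $2\fu(\vphi)\fz$ produced by $[\fu,-2\fu(\vphi)\fd]$ is cancelled precisely by the term coming from the action of the vector field $\fzp$ on the exponential factor $e^{-\vphi}$, and the remaining scalar $\fz(\vphi)$ (which survives because $\fu(\vphi)\fd(\vphi) - \fd(\vphi)\fu(\vphi) = 0$) ends up as the $\fz(\vphi)$ term in $\atup$. An analogous computation with $\fup \leftrightarrow \fdp$ yields $\aqup, \aqdp$.

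The only real obstacle is bookkeeping: each bracket produces a dozen or so terms, and one must carefully track signs, the coefficients $2$ and $4$ coming from the doubling in $\fzp$, and the conversion factors $e^{\vphi}$ when passing back to $\{\fup,\fdp\}$. No nontrivial identity is needed beyond the structural equations \eqref{eqn:2.4} themselves; in particular, Remark \ref{rem:4.1} is not invoked here, since those come into play only at the level of the second-order invariants. The arithmetic is mechanical and I would organize it by writing each bracket as $(\text{scalar}_0)\fz + (\text{scalar}_1)\fu + (\text{scalar}_2)\fd$, checking the vanishing of scalar$_0$ in the mixed brackets, and reading $c^k_{ij,\vphi}$ off the remaining two scalars after multiplication by $e^{\vphi}$.
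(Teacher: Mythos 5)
Your proposal is correct and is exactly the computation the paper has in mind: the paper's entire proof of Lemma \ref{lemma:4.5} is the single sentence that it is a straightforward computation using Lemma \ref{lemma:4.3} and the structural equations \eqref{eqn:2.4}, which is precisely your Leibniz-rule expansion of the three brackets followed by re-expression in the rescaled frame (and your verified formulas do match \eqref{eqn:4.8}). One small narrative slip worth noting: in the $\fz$-coefficient of $[\fup,\fzp]$, the term $+2\fu(\vphi)\fz$ arising from $-2\fu(\vphi)[\fu,\fd]$ is cancelled by the $\fz$-component of $\fu\!\left(e^{-2\vphi}\right)\left(\fz+2\fd(\vphi)\fu-2\fu(\vphi)\fd\right)$, i.e.\ the derivative along $\fu$ of the prefactor $e^{-2\vphi}$ of $\fzp$, not by the action of $\fzp$ on the factor $e^{-\vphi}$ in $\fup$ (that term is proportional to $\fu$ and instead contributes the surviving $\fz(\vphi)$ to $\atup$); your componentwise bookkeeping of the three scalar coefficients would catch this in any case.
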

		\begin{proof}
			It is a straightforward computation using lemma \ref{lemma:4.3} and the structural equations \eqref{eqn:2.4}.
		\end{proof}
		\begin{remark}
			\label{rem:4.6}
			In the case of a left-invariant contact three dimensional sub-Riemannian structure, under the choice of the canonical frame of either proposition
			\ref{prop:2.3} or \ref{prop:2.4}, the equalities \eqref{eqn:4.2} and \eqref{eqn:4.3} become
			\begin{equation}
				\label{eqn:4.9}
				\begin{split}
					\au\atd&=0,\\
					\ad\aqu&=0.
				\end{split}
			\end{equation}
			On the other hand, in this case $\alpha$ and $\beta$ read
			\begin{align*}
				\alpha&=\frac{(\ad)^2\atd}{12}-\frac{3(\atd)^2}{8}+\frac{(\au)^2\aqu}{12}+\frac{3(\aqu)^2}{8},\\
				\beta&=0.
			\end{align*}
			Therefore, when well-defined, the ratio $\frac\beta\alpha$ is equal to $0$.
		\end{remark}
		
		\section{Local conformal flatness of left-invariant structures}
		\label{S_Flatness}
		A pseudo-Riemannian metric $g$ is said to be locally conformally flat at $q\in M$
		if there exist a neighborhood $U$ of $q$ and a local rescaling function $\vphi: M\supseteq U\to \R$ so that the Riemann tensor of $\tilde g\doteq e^{2\vphi}g$
		is zero: this is equivalent to say that a pseudo-Riemannian manifold $M$ is (locally) conformally flat at $q$ if, and only if,
		after a suitable rescaling, it becomes locally isometric to its tangent space $T_qM$;
		this second formulation can indeed be used to define flatness in the sub-Riemannian setting.
		\begin{remark}
			\label{rem:5.1}
			It is known, see \cite{AgrachevBarilariBoscain12}, that the right notion of tangent space in the sense of Gromov for the sub-Riemannian case is that of the
			\emph{nilpotent approximation} introduced by Mitchell \cite{Mitchell85}. For a three dimensional contact structure the nilpotent approximation is unique
			and it is precisely the Heisenberg structure $\Heis_3$.
		\end{remark}
		\begin{definition}
			\label{def:5.2}
			A contact three dimensional sub-Riemannian structure is locally conformally flat if, fixed any point $q\in M$ there exist a neighborhood
			$U$ of $q$ and a function $\vphi:U\to\R$ so that the rescaled structure $\contactp$ becomes locally isometric to the Heisenberg group.
			If $\vphi$ can be defined globally on $M$ we say that our structure is conformally flat.
		\end{definition} 
		We know from the previous section that the scalar conformal invariant associated to a left-invariant contact three dimensional structure is zero,
		nonetheless a necessary condition for local conformal flatness is $\alpha=0$; in particular we will analyze all the situations
		in which this actually occurs.
		\begin{remark}
			\label{rem:5.3}
			A contact three dimensional sub-Riemannian structure is locally isometric to the Heisenberg group if and only if both its metric invariants
			$\chi$ and $\kappa$ are zero. If we denote with $\kappa_\vphi$ and $\chi_\vphi$ the metric invariants of the rescaled
			structure, around any point $q\in M$ there have to be a neighborhood $U$ of $q$ and a suitable rescaling $\vphi$ such that
			$\kappa_\vphi=\chi_\vphi=0$ holds.
		\end{remark}
		
		We deal first with non unimodular structures.\newline
		By the equalities in \eqref{eqn:4.9}, the analysis have to take into consideration 
		two different situations:
		\begin{itemize}
			\item [$i)$] $\mathfrak{solv}^+$, that is structures satisfying $\au=\aqu=0$, 
			\item [$ii)$] $\mathfrak{solv}^-$, that is structures satisfying $\ad=\atd=0$.
		\end{itemize}
		Notice that the choice of the canonical frame forces $\chi\neq 0$; therefore $\atd\neq 0$ in case $i)$, and $\aqu\neq 0$ in $ii)$.\newline Observe that 
		$\alpha$ reads
		\[
		\begin{alignedat}{5}
		&\mathfrak{solv}^+):\quad&&\alpha=-\frac\chi 6(\kappa+8\chi),\\
		&\mathfrak{solv}^-):\quad&&\alpha=-\frac\chi 6(\kappa-8\chi).
		\end{alignedat} 
		\]
		The main result for non unimodular structures is the following:
		\begin{theorem}
			\label{thm:5.5}
			Let $\contact$ be a three dimensional left-invariant non unimodular contact structure. Then $\contact$ is locally conformally flat
			if and only if its canonical frame satisfies one of the following set of structural equations
			\begin{equation}
				\label{eqn:5.1}
				\text{i)}\;  \left\{\begin{alignedat}{5}
					&[\fd,\fu]&=&\fz+\ad\fd,\\\\
					&[\fu,\fz]&=&\frac29\left(\ad\right)^2\fd,\\\\
					&[\fd,\fz]&=& 0
				\end{alignedat}
				\right.\qquad\text{ii)}\;   \left\{
				\begin{alignedat}{5}
					&[\fd,\fu]&=&\fz+\au\fu,\\\\
					&[\fd,\fz]&=&-\frac29\left(\au\right)^2\fd,\\\\
					&[\fu,\fz]&=& 0.
				\end{alignedat}
				\right.
			\end{equation}
			In particular an admissible function $\vphi$ can be chosen
			via the requirements
			\[
			\text{i)}\;\left\{
			\begin{alignedat}{5}
			&\fu(\vphi)&=&-\frac{\ad}{3}\\
			&\fd(\vphi)&=& 0
			\end{alignedat}
			\right.\qquad \text{ii)}\;\left\{
			\begin{alignedat}{5}
			\fd(\vphi)&=&\frac{\au}{3}\\
			\fu(\vphi)&=& 0.
			\end{alignedat}
			\right.
			\]
		\end{theorem}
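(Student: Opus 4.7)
My plan is to prove the two implications separately, working out case~i) in detail and deducing case~ii) by the analogous argument with the roles of $\fu$ and $\fd$ interchanged.

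For the sufficiency direction, I would first verify that the prescription $\fu(\vphi)=-\ad/3$, $\fd(\vphi)=0$ can be integrated to a smooth local function $\vphi$. Because $\ad$ is a structural constant, $\fu(\ad)=\fd(\ad)=0$, so evaluating the bracket $[\fd,\fu]=\fz+\ad\fd$ of~(\ref{eqn:5.1})~i) on $\vphi$ forces the compatibility $\fz(\vphi)=0$; on the other hand, under~(\ref{eqn:5.1})~i) we have $\ud\nuu=0$, so $\ud\vphi=-\frac{\ad}{3}\nuu$ is closed and $\vphi$ exists locally. With such a $\vphi$ in hand, I would substitute into the six transformation formulas in~(\ref{eqn:4.8}) of Lemma~\ref{lemma:4.5}. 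The first two immediately yield $\aup=\adp=0$ by construction; the key point is that the precise numerical identity $\atd=\frac{2}{9}(\ad)^2$ encoded in~(\ref{eqn:5.1})~i) is exactly what forces the remaining four coefficients $\atup,\atdp,\aqup,\aqdp$ to collapse to zero as well. At that stage $\contactp$ has all structural constants equal to zero, hence is locally isometric to $\Heis_3$.

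For the necessity direction, I would exploit the conformal rescaling property~(\ref{eqn:3.5}) of the Fefferman pseudo-metric. If $\contact$ rescales locally to $\Heis_3$, then its Fefferman metric is conformally equivalent to the (locally flat) Fefferman metric of $\Heis_3$ and therefore has vanishing Weyl tensor, forcing $\alpha=\beta=0$. Remark~\ref{rem:4.6} already gives $\beta\equiv 0$ for any left-invariant structure, so the only content is the equation $\alpha=0$. In the non-unimodular canonical frame, the relations~(\ref{eqn:4.9}) reduce $\alpha$ to $-\frac{\chi}{6}(\kappa+8\chi)$ in $\mathfrak{solv}^+$ and to $-\frac{\chi}{6}(\kappa-8\chi)$ in $\mathfrak{solv}^-$; since $\chi\neq 0$ in the canonical frame, this forces $\kappa=\mp 8\chi$. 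Expanding $\chi$ and $\kappa$ in terms of the surviving structural coefficients then produces exactly the algebraic relation $\atd=\frac{2}{9}(\ad)^2$ in $\mathfrak{solv}^+$ (equivalently case~i) of~(\ref{eqn:5.1})), and the analogous relation $\aqu=-\frac{2}{9}(\au)^2$ in $\mathfrak{solv}^-$ (case~ii)).

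The main technical obstacle I anticipate is the bookkeeping in the sufficiency direction: verifying the simultaneous vanishing of all six rescaled coefficients in~(\ref{eqn:4.8}) requires careful substitution of both the bracket relations~(\ref{eqn:5.1}) and the prescribed values of the derivatives of $\vphi$, and it is precisely the coefficient $2/9$ that makes the identities close up. This numerical rigidity is what pins down the theorem and explains why only a specific one-parameter family of non-unimodular left-invariant structures is locally conformally flat.
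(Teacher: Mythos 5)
Your proposal is correct and takes essentially the same route as the paper: necessity by observing that local conformal flatness forces the Weyl entry $\alpha$ to vanish, which with $\chi\neq 0$ gives $\kappa+8\chi=0$, i.e.\ $\atd=\tfrac29(\ad)^2$ (and the analogous relation for $\mathfrak{solv}^-$); sufficiency by integrating $\ud\vphi=-\tfrac{\ad}{3}\nuu$ via Poincar\'e's lemma and checking through Lemma~\ref{lemma:4.5} that all rescaled structural coefficients vanish. You merely make explicit what the paper leaves implicit, namely that only the coefficient $\atdp$ actually uses the identity $\atd=\tfrac29(\ad)^2$.
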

		\begin{remark}
			\label{rem:5.6}
			The canonical frame in \emph{ii)} does not satisfy $\atd+\aqu\geq 0$; if we change the orientation on the contact planes, that is 
			$\tilde{\fu}=\fd,\;\tilde{\fd}=\fu$, we recover the structure in \emph{i)}.
		\end{remark}
		\begin{proof}[Of theorem \ref{thm:5.5}] We may assume that either $\ad\neq 0$ in the first case or $\au\neq 0$ in the second. 
			Notice first that the structural equations in \eqref{eqn:5.1} are necessary conditions for a non-unimodular structure to
			satisfy $\alpha=0$. On the other hand the requirements on $\vphi$ in both cases imply $\fz(\vphi)=0$; such a function $\vphi$ exists at least locally
			if and only if the following one forms are closed:
			\begin{align*}
				i)\quad\gamma_1&=-\frac{\ad}{3}\nuu,\\
				ii)\quad\gamma_2&=\frac{\au}{3}\nud.
			\end{align*}
			It is easy to see that the integrability conditions for the two equations above come in both cases from the structural equations \eqref{eqn:4.9}; the existence of $\vphi$ is then locally
			guaranteed by Poincar\'e's lemma. By means of lemma \ref{lemma:4.5} we further see that under this choice for $\vphi$ all the structural coefficients are 
			indeed sent to zero. This proves the theorem.
		\end{proof}
		\begin{remark}
			\label{rem:5.7}
			It is possible to give an explicit coordinate description of $\vphi$. We go through all the details just for case \emph{i)}, the other 
			being entirely analogous. It is known (see \cite{BarilariRizzi13}) that for three dimensional contact structures the sub-Laplacian is given by the formula
			\[
			L_{sr}(\vphi)=\fu(\fu(\vphi))+\fd(\fd(\vphi))+\ad\fu(\vphi)-\au\fd(\vphi);
			\]
			in our case $\vphi$ satisfies $L_{sr}(\vphi)+\frac{(\ad)^2}{3}=0$.\newline Conversely, if $\vphi$ is such that $L_{sr}(\vphi)+\frac{(\ad)^2}{3}=0$
			and $\fz(\vphi)=0$, we see that
			\[
			0=(\fu+i\fd+\ad)\left(\fu(\vphi)-i\fd(\vphi)+\frac{\ad}{3}\right),
			\]
			that is 
			\[
			\psi\doteq\fu(\vphi)-i\fd(\vphi)+\frac{\ad}{3}\in \ker(\fu+i\fd+\ad).
			\]
			Then $\psi$ is an eigenfunction for the complex degree one differential operator $\fu+i\fd+\ad$. The kernel of the latter
			is indeed one dimensional, and is spanned over $\C$ by the exponential function. However, since $\ad\neq 0$ and therefore $\psi$ has a constant non trivial part,
			the only possibility not to fall into a contradiction is that $\psi$ is in fact the zero function.\footnote{Having in mind the usual identification from complex geometry
				$\partial_x+i\partial_y=\partial_{\overline w}$, this becomes the eigenvalue problem $\partial_{\overline w}f=-\ad f$, whose solutions are of the form
				$f=Ae^{-\ad\overline w}$} But this is equivalent to say that
			\[
			\mathfrak{Re}(\psi)=0,\quad \mathfrak{Im}(\psi)=0,
			\]
			which leads to 
			\begin{equation}
				\label{eqn:5.2}
				\fu(\vphi)=-\frac{\ad}{3},\quad \fd(\vphi)=0.
			\end{equation}
			Now, since $[\fd,\fz]=0$, there exists a diffeomorphism (see \cite{Book})
			\begin{equation}
				\label{eqn:5.3}
				\Phi:M\ni O_q\to O_0\in \R^3, \quad q\in M
			\end{equation}
			such that 
			\begin{align}
				\label{eqn:5.4}
				\Phi_*\left(\fz\right)=\pz,\;\; \Phi_*\left(\fu\right)=\px+a_1(x,y,z)\py+a_2(x,y,z)\pz,\;\; \Phi_*\left(\fd\right)=\py\crcr
			\end{align}
			From the structural equations it follows
			\[
			\Phi_*\left(\fu\right)=\px+\left(\ad y-\frac29\left(\ad\right)^2z+b_1(x)\right)\py+(y+b_2(x))\pz.
			\]
			If now we write in coordinates $L_{sr}+\frac{(\ad)^2}{3}=0$ subject to the conditions $\py\vphi=\pz\vphi=0$ it is easy to conclude
			that such a $\vphi$ is of the form
			\[
			\vphi(x)=A_1+A_2e^{-\ad x}-\frac{\ad}{3}x,\quad A_1, A_2\in \R.
			\]
			Reasoning in the same manner even for case $ii)$, we see that $\vphi$ must be of the form
			\[
			\vphi(y)=B_1+B_2e^{\au y}-\frac{\au}{3}y,\quad B_1, B_2\in \R.
			\]
		\end{remark}
		
		We deal now with the unimodular case. By definition \ref{def:1.3} a three dimensional left-invariant sub-Riemannian manifold is unimodular
		if and only if $\au=\ad=0$. It holds
		\[
		\alpha=0\Leftrightarrow \frac38(\atd+\aqu)(\atd-\aqu)=\frac32 \kappa\chi=0,
		\]
		that is if and only if either $\kappa=0$ or $\chi=0$.\newline The strategy used in the non unimodular case does not apply here; we have to verify directly whether or not the system of equations
		\begin{alignat}{5}
			\label{eqn:5.5}
			\kappa_\vphi&=-4\fu(\fu(\vphi))-4\fd(\fd(\vphi))-4\fu(\vphi)^2-4\fd(\vphi)^2+\kappa&=0,\crcr
			\chi_\vphi&=2\fu(\vphi)^2-\fu(\fu(\vphi))-2\fd(\vphi)^2+\fd(\fd(\vphi))+\chi=0,
		\end{alignat}
		admits a local solution.\newline In what follows we will always assume the normalization $\kappa^2+\chi^2=1$.\newline Our first proposition concerns the realization of explicit models for such structures; we recall that we have to analyze three different cases: 
		\[
		i)\:\kappa=-1,\chi=0,\quad ii)\:\kappa=1,\chi=0,\quad iii)\:\kappa=0,\chi=1,\quad
		\]
		\begin{proposition}
			\label{prop:5.7}
			Let $\contact$ be a left-invariant three dimensional unimodular contact structure whose canonical normalized frame satisfies one of the following set of structural equations
			\begin{equation}
				\label{eqn:5.6}
				\begin{alignedat}{9}
					&i)\quad& \left\{
					\begin{alignedat}{7}
						&[\fd,\fu]&&=&&\fz,\\
						&[\fu,\fz]&&=&&-\fd,\\
						&[\fd,\fz]&&=&&\fu.
					\end{alignedat}
					\right.\quad& ii)\quad& \left\{
					\begin{alignedat}{7}
						&[g_2,g_1]&&=&& g_0,\\
						&[g_1,g_0]&&=&& g_2,\\
						&[g_2,g_0]&&=&&-g_1,
					\end{alignedat}
					\right.\quad& iii)\quad&\left\{
					\begin{alignedat}{7}
						&[l_2,l_1]&&=&& l_0,\\
						&[l_1,l_0]&&=&& l_2,\\
						&[l_2,l_0]&&=&& l_1.
					\end{alignedat}
					\right.
				\end{alignedat}
			\end{equation}
			An explicit model for each of these structures is given by:
			\begin{alignat}{10}
				& i)\quad && \left\{
				\begin{aligned}
					\label{eqn:5.7}
					\fz&=\pz\crcr
					\fu&=e^y\cos(z)\px-\sin(z)\py+\cos(z)\pz,\crcr
					\fd&=-e^y\sin(z)\px-\cos(z)\py-\sin(z)\pz;
				\end{aligned}
				\right.\crcr
				& ii)\quad && \left\{
				\begin{aligned}
					g_0&=\pz\crcr
					g_1&=ie^y\cos(z)\px-i\sin(z)\py+i\cos(z)\pz,\crcr
					g_2&=ie^y\sin(z)\px+i\cos(z)\py+i\sin(z)\pz;
				\end{aligned}
				\right.\crcr 
				& iii)\quad && \left\{
				\begin{aligned}
					l_0&=-i\pz\crcr
					l_1&=-ie^y\sin(z)\px-i\cos(z)\py-i\sin(z)\pz,\crcr
					l_2&=e^y\cos(z)\px-\sin(z)\py+\cos(z)\pz.
				\end{aligned}
				\right.       
			\end{alignat}
		\end{proposition}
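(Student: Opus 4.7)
The plan is to verify the three explicit models by direct computation of the Lie brackets in the given coordinates, since the proposition is essentially an assertion that the exhibited vector fields realize the required structural equations. In every case the Reeb field $\fz$ (respectively $g_0$, $l_0$) is a constant-coefficient multiple of $\pz$, so the two brackets involving $\fz$ reduce to $\pm\pz$ applied to the coefficients of the horizontal fields. Since these coefficients depend on $z$ only through $\sin(z)$ and $\cos(z)$, the resulting expressions are immediately identified with $\pm\fd$ and $\pm\fu$ (or their analogues), with precisely the signs prescribed by \eqref{eqn:5.6}.

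The only genuinely nontrivial bracket to compute is $[\fd,\fu]$ (and its analogues $[g_2,g_1]$, $[l_2,l_1]$). Expanding via the standard formula $[X,Y]=X(Y^j)\partial_j-Y(X^j)\partial_j$, the $\px$-contributions involve derivatives of the $e^y$-factor and combine through the identity $\sin^2(z)+\cos^2(z)=1$ to produce pairs of cancelling terms; the $\py$-contributions cancel by the same identity; and the $\pz$-contributions assemble into the constant $1$, yielding $\pz=\fz$. Cases (ii) and (iii) run by the same mechanism, the imaginary units and the swap between $\sin$ and $\cos$ being designed precisely to change the sign of $[g_1,g_0]$ or to realize the point $(\chi,\kappa)=(1,0)$ rather than $(0,\pm 1)$ on the normalization circle $\chi^2+\kappa^2=1$.

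As a conceptual guide to where these formulas come from, one may first impose $\fz=\pz$ and then read the structural relations $[\fu,\fz]=-\fd$ and $[\fd,\fz]=\fu$ as the ODE system $\pz\fu=\fd$, $\pz\fd=-\fu$; this forces $\fu=A(x,y)\cos(z)+B(x,y)\sin(z)$ and $\fd=-A(x,y)\sin(z)+B(x,y)\cos(z)$ for vector fields $A,B$ depending only on $x$ and $y$. The remaining bracket $[\fd,\fu]=\pz$ then becomes a differential constraint on $A$ and $B$ whose simplest solution is $A=e^y\px+\pz$, $B=-\py$; the analogous procedure in the other two cases, with appropriate sign adjustments encoded by the factor $i$, yields the stated formulas.

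The main obstacle is not conceptual but purely computational: the cross cancellations in $[\fd,\fu]$ involve several trigonometric and exponential terms that must be tracked carefully, and in cases (ii) and (iii) the signs introduced by the imaginary unit require particular attention. Once this bookkeeping is completed the verification is elementary, and the contact condition together with the identification of $\fz$ as the Reeb vector field then follow automatically from the structural equations themselves.
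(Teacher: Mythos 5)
Your proposal is correct and matches the paper's argument in substance: the proposition is proved by directly verifying that the exhibited frame in case (i) satisfies the stated brackets (with the $\partial_x$- and $\partial_y$-components cancelling and the $\partial_z$-component summing to $1$ via $\sin^2 z+\cos^2 z=1$), exactly as you describe. The only organizational difference is that the paper disposes of cases (ii) and (iii) not by recomputing brackets but by exhibiting the explicit isomorphisms $g_0\mapsto \fz$, $g_1\mapsto i\fu$, $g_2\mapsto -i\fd$ and $l_0\mapsto -i\fz$, $l_1\mapsto i\fd$, $l_2\mapsto \fu$ onto case (i) --- which is precisely the mechanism you invoke informally when you note that the factors of $i$ and the $\sin$/$\cos$ swap are designed to flip the relevant signs.
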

		\begin{proof}
			Notice that the structures ii) and iii) are isomorphic to the first one, the isomorphism being given by
			\[
			ii)\quad
			\begin{aligned}
			g_0&\mapsto \fz,\crcr
			g_1&\mapsto i\fu,\crcr
			g_2&\mapsto -i\fd,
			\end{aligned}
			\quad iii)\quad
			\begin{aligned}
			l_0&\mapsto -i\fz,\crcr
			l_1&\mapsto i\fd,\crcr
			l_2&\mapsto \fu.
			\end{aligned}
			\]
			It is then sufficient to observe that the model of i) presented in the claim of the proposition satisfies the same structural equations as in \eqref{eqn:5.6} to conclude.
		\end{proof}
		
		Having the model at hand, it is now immediate to prove the following
		
		\begin{theorem}
			\label{thm:5.8}
			Let $\contact$ be a left-invariant three dimensional unimodular structure whose associated Fefferman metric is locally conformally flat, that is let $\contact$ be of type i), ii) or iii) in proposition \ref{prop:5.7}. Then $\contact$ is locally conformally flat. Choosing the model representation of $\contact$ as in \eqref{eqn:5.7}, an admissible rescaling function $\vphi$, solution to \eqref{eqn:5.5}, is given requiring that
			\begin{alignat*}{10}
				& i),\: ii)\quad &&\px\vphi=0,\quad&& \py\vphi=\frac12,\quad&& \pz\vphi=0,\\
				& iii)\quad &&\px\vphi=0,\quad&& \py\vphi=-1,\quad&& \pz\vphi=0.
			\end{alignat*}
		\end{theorem}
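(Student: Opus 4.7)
\medskip

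\noindent\textbf{Proof proposal.} The plan is to verify the claim by a direct substitution into the system \eqref{eqn:5.5}, exploiting the explicit coordinate models furnished by Proposition \ref{prop:5.7}. A preliminary simplification is afforded by the isomorphisms $g_0\mapsto\fz,\ g_1\mapsto i\fu,\ g_2\mapsto -i\fd$ and $l_0\mapsto -i\fz,\ l_1\mapsto i\fd,\ l_2\mapsto \fu$ used in the proof of Proposition \ref{prop:5.7}: under these identifications the formulas for $\kappa_\vphi$ and $\chi_\vphi$ in \eqref{eqn:5.5} transform in a controlled way, so it is enough to carry out the verification in one case and then check that the rescaled structural coefficients obtained via Lemma \ref{lemma:4.5} are indeed pulled back to zero in the remaining two situations. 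The normalization $\chi^2+\kappa^2=1$ fixes $\kappa=-1,\ \kappa=1,\ \chi=1$ in cases i), ii), iii) respectively.

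First I would handle case i). With the model \eqref{eqn:5.7} and $\vphi=y/2$, the only nontrivial coordinate derivative of $\vphi$ is $\py\vphi=\tfrac12$, so
\[
\fu(\vphi)=-\tfrac12\sin z,\qquad \fd(\vphi)=-\tfrac12\cos z.
\]
Applying $\fu$ and $\fd$ once more, every term involving $\px$ and $\py$ annihilates $\vphi$ or its first derivative, and only the $\pz$-parts of $\fu$ and $\fd$ survive, yielding
\[
\fu(\fu(\vphi))=-\tfrac12\cos^2 z,\qquad \fd(\fd(\vphi))=-\tfrac12\sin^2 z.
\]
Substituting into \eqref{eqn:5.5} with $\kappa=-1,\ \chi=0$, the two equations collapse to $2-1-1=0$ and to a cancellation of $\sin^2 z$ and $\cos^2 z$ terms. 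This shows that the prescribed $\vphi$ solves the system.

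Next I would dispatch cases ii) and iii). For each, the same short calculation can be repeated essentially verbatim with the vector fields $g_1,g_2$ or $l_1,l_2$ in place of $\fu,\fd$; the factors of $i$ present in the explicit models only affect overall signs, and the proposed $\vphi$ (namely $\py\vphi=1/2$ in case ii) and $\py\vphi=-1$ in case iii)) is tuned precisely so that the trigonometric terms cancel, while the constant term matches the corresponding value of $\kappa$ or $\chi$. Alternatively, since the Lie algebra isomorphism from Proposition \ref{prop:5.7} maps the canonical frame of case i) to that of the other two cases up to multiplication by $i$, one may read off the required $\vphi$ in cases ii) and iii) from the formulas in case i) by tracking the effect of those rescalings on the rescaling law \eqref{eqn:4.8}.

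I do not expect any genuine obstacle: the verification is a finite computation once the model \eqref{eqn:5.7} is in hand. The only delicate point is bookkeeping, in particular keeping the isomorphisms in Proposition \ref{prop:5.7} consistent with the signs of $\kappa$ and $\chi$ and confirming that the formal factors of $i$ appearing in the models of ii) and iii) do not affect the real-valued solvability of \eqref{eqn:5.5}. Once both equations in \eqref{eqn:5.5} are checked to vanish identically under the proposed choice of $\vphi$, Remark \ref{rem:5.3} concludes that $\contact$ is locally isometric, after rescaling, to $\Heis_3$, i.e.\ locally conformally flat in the sense of Definition \ref{def:5.2}.
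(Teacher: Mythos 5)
Your proposal is correct and follows essentially the same route as the paper: a direct substitution of the explicit model \eqref{eqn:5.7} into the system \eqref{eqn:5.5}, computing $\fu(\vphi)$, $\fd(\vphi)$ and their second derivatives for case i) (your values $-\tfrac12\cos^2 z$, $-\tfrac12\sin^2 z$, $\tfrac14\sin^2 z$, $\tfrac14\cos^2 z$ match the paper's) and then treating ii) and iii) analogously. The paper's proof is equally brief on cases ii) and iii), so no gap is introduced.
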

		\begin{proof}
			It is just a matter of computations. Notice that in case i)
			our choice of $\vphi$ solves \eqref{eqn:5.5} since the following relations hold true
			\begin{alignat*}{5}
				&\fu(\fu(\vphi))&=-\frac{1}2\cos^2(z),\quad&\fu(\vphi)^2&=\frac{1}4\sin^2(z),\\
				&\fd(\fd(\vphi))&=-\frac{1}2\sin^2(z),\quad&\fd(\vphi)^2&=\frac{1}4\cos^2(z).\\
			\end{alignat*}
			Case ii) and iii) are handled similarly, and this completes the proof.
		\end{proof}
		
		\section{Classification of non flat left-invariant structures}
		\label{S_Classification}
		
		In this section we will carry out a complete classification of non flat left-invariant three dimensional structures under the action
		of the conformal group. Possibly changing the orientation on the contact planes, we will assume $\atd+\aqu\geq 0$. Although the analysis can be 
		carried out along the same lines both for unimodular and non unimodular structures, we prefer to split the presentation into two parts since the
		calculations involved are quite different. Also, for non unimodular structures we will furnish all the details just for those in $\mathfrak{solv}^+$,
		since the handling of the $\mathfrak{solv}^-$ case is almost identical.
		\begin{remark}
			\label{rem:6.1}
			By the choice of a canonical frame for a $\mathfrak{solv}^+$ structure, that is
			\begin{equation}
				\label{eqn:6.1}
				\left\{
				\begin{alignedat}{6}
					&[\fd,\fu]&=&\fz+\ad \fd,\\
					&[\fu,\fz]&=&\atd \fd,\\
					&[\fd,\fz]&=& 0,
				\end{alignedat}
				\right.
			\end{equation}
			the condition $2\chi=\atd\neq 0$ holds. Further, by possibly change the signs both for $\fu$ and $\fd$, we may also assume $\sqrt{\chi-\kappa}=\ad>0$.\newline Likewise, under the previous assumptions,
			the canonical frame for a unimodular structure can be expressed as
			\begin{equation}
				\label{eqn:6.2}
				\left\{
				\begin{alignedat}{6}
					&[\fd,\fu]&=&\fz,\\
					&[\fu,\fz]&=&(\chi+\kappa)\fd,\\
					&[\fd,\fz]&=&(\chi-\kappa)\fu.
				\end{alignedat}
				\right.
			\end{equation}
		\end{remark}
		Return for a moment to the Fefferman metric; as for any pseudo-Riemannian metric, its geodesics
		can be characterized as solutions to Hamilton's equation for the Hamiltonian defined by inverting the metric, and they can be treated
		as quadratic on fibers functions on the cotangent bundle (\cite{Arnold89}), that is (cfr. \eqref{eqn:2.2})
		\begin{equation}
			\label{eqn:6.3}
			H(q,h)\doteq \frac12\sum_{i,j}g^{ij} h_i h_j,\quad h_i(\lambda)=\langle \lambda,f_i(q)\rangle,\;q=\pi(\lambda)
		\end{equation}
		where $g^{ij}$ is the matrix in \eqref{eqn:3.21}. In particular, we are interested just in light-like geodesics, the so called \emph{chains}, that is the set
		of solutions to $H(q,h)=0$.\newline Chains are conformally invariant: any rescaling of the metric on the contact planes $\{\Delta_q\}_{q\in M}$ induces a rescaling
		for the Fefferman metric $\tilde{g^{ij}}=e^{-2\vphi}g^{ij}$; accordingly, the new Hamiltonian geodesic field is related to the
		old one on the level set $\{H=0\}$ by
		\[
		\Vec{\tilde H}=e^{-2\vphi} \Vec H;
		\]
		this proportionality relation says that light-like geodesics for the two metrics are the same if considered as sets of
		unparametrized curves. Here we recall that the Hamiltonian lift of an Hamiltonian function is defined via the requirement
		\begin{equation}
			\label{eqn:6.4}
			\iota_{\Vec{H}}\sigma\doteq -\ud H,
		\end{equation}
		where $\sigma=\ud s$ denotes the symplectic form of the cotangent bundle $T^*Z$ and 
		\[
		s=h_0\nuz+h_1\nuu+h_2\nud+h_{\infty}\nu^{\infty}
		\]
		is its tautological one form. Notice also that the left-invariance of our structures imply that $H$ is actually independent on the
		base point, that is $H(q,h)=H(h)$.
		\begin{remark}
			\label{rem:6.2}
			It is worth recalling that the momentum scaling property \[H(q,\lambda h)=\lambda^2H(q,h),\quad h\in T^*_qZ,\] corresponds to the fact that the geodesic $\tilde\gamma(t)$
			with initial conditions $(q,\lambda h)$ is the same as the geodesic $\gamma(t)$ as represented by the initial conditions $(q,h)$, but parametrized at a different speed,
			that is $\tilde\gamma(t)=\gamma(\lambda t)$.
		\end{remark}
		The chain equation reads for unimodular structures
		\begin{equation}
			\label{eqn:6.5}
			H(h)=3h_0h_\infty+h_1^2+h_2^2-\frac94 \kappa h_\infty^2=0,
		\end{equation}
		while for $\mathfrak{solv}^+$ structures it becomes
		\begin{equation}
			\label{eqn:6.6}
			H(h)=3h_0h_\infty+h_1^2+h_2^2+2\sqrt{\chi-\kappa}h_2h_\infty-\frac14\left(\kappa+8\chi\right)h_\infty^2=0.
		\end{equation}
		It is well-known (see \cite{Arnold89}, \cite{Book}) that geodesics are solutions
		of the ODE
		\[
		\lambda(t)=e^{t\Vec H}\lambda(0);
		\]
		differentiating with respect to time both sides of this equation, it follows
		\begin{equation}
			\label{eqn:6.7}
			\dot\lambda(t)=\Vec{H}e^{t\Vec H}\lambda(0)=\Vec H\lambda(t)=\{H,\lambda(t)\},
		\end{equation}
		from which it is immediate to read in fiber coordinates $\{h_0,h_1,h_2,h_\infty\}$ that $\dot h_\infty\equiv 0$, i.e. the fourth coordinate is constant along the chains.
		By the scaling momentum property recalled above, we may distinguish between the cases $h_\infty\equiv 0$ or $h_\infty\equiv 1$; only the second instance will matter to us.\newline Let us begin our analysis with
		$\mathfrak{solv}^+$ structures: these may be interpreted as the semidirect product of the two dimensional Lie algebra spanned by $\fz$ and $\fd$ by the one dimensional 
		Lie algebra spanned by $\fu$, where the action of $\fu$ is given through the matrix
		\[
		\text{ad}_{\fu}=\left(
		\begin{array}{ll}
		0 & -1 \\
		\atd & -\ad
		\end{array}
		\right)
		\]
		\begin{proposition}
			\label{prop:6.3}
			Let $\contact$ be a $\mathfrak{solv}^+$ left-invariant three dimensional contact structure. There exists a frame
			$\tilde{\fz}, \tilde{\fu}, \tilde{\fd}\;$ for its Lie algebra satisfying the following relations, depending on whether $\delta=0$,
			$\delta>0$ or $\delta<0$ respectively:
			\begin{alignat}{5}
				\label{eqn:6.8}
				i)& \left\{\begin{alignedat}{4} [\tilde{\fd},\tilde{\fu}]& = && \tilde{\fz}+\tilde{\fd},\\ 
					[\tilde{\fz},\tilde{\fu}]& = && \tilde{\fz},\\
					[\tilde{\fd},\tilde{\fz}]& = && 0;\end{alignedat}\right.\qquad & 
				ii)& \left\{\begin{alignedat}{4} [\tilde{\fd},\tilde{\fu}]& = && \tilde{\fd},\\ 
					[\tilde{\fz},\tilde{\fu}]& = && a\tilde{\fz},\\
					[\tilde{\fd},\tilde{\fz}]& = && 0; \end{alignedat}\right.\qquad & 
				iii)& \left\{\begin{alignedat}{4} [\tilde{\fd},\tilde{\fu}]& = && \tilde{\fz}+b\tilde{\fd},\\ 
					[\tilde{\fz},\tilde{\fu}]& = && b\tilde{\fz}-\tilde{\fd}.\\
					[\tilde{\fd},\tilde{\fz}]& = && 0; \end{alignedat}\right. &                                         
			\end{alignat}
			where
			\[
			\delta=(\ad)^2-4\atd=-\kappa-7\chi,\quad 0<a=\frac{\ad-\sqrt{\delta}}{\ad+\sqrt{\delta}}<1,\quad 0<b=\frac{\ad}{\sqrt{-\delta}}<+\infty.
			\]
		\end{proposition}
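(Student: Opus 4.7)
The structural equations \eqref{eqn:6.1} show that $\mathfrak{a}\doteq\Span\{\fz,\fd\}$ is a two-dimensional abelian ideal of the Lie algebra, since $[\fd,\fz]=0$ and both $[\fu,\fz]$ and $[\fu,\fd]$ stay in $\mathfrak{a}$. Hence the whole Lie algebra is a semidirect product $\mathbb{R}\fu\ltimes\mathfrak{a}$, and its isomorphism class is completely determined, up to linear change of basis in $\mathfrak{a}$ and positive rescaling $\tilde{\fu}=c\fu$, by the conjugacy class of the operator $A\doteq\text{ad}_{\fu}|_{\mathfrak{a}}$ modulo positive scaling. The proposition is therefore really a statement about putting a $2\times 2$ real matrix into a real normal form.

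In the basis $\{\fz,\fd\}$ one has
\[
A=\begin{pmatrix} 0 & -1\\ \atd & -\ad\end{pmatrix},
\]
with characteristic polynomial $\lambda^{2}+\ad\,\lambda+\atd$ and discriminant $\delta=(\ad)^{2}-4\atd$. The identification $\delta=-\kappa-7\chi$ then follows at once from the canonical-frame relations $\atd=2\chi$ and $(\ad)^{2}=\chi-\kappa$, which are immediate consequences of the formulas for $\chi$ and $\kappa$ once $\au=\aqu=0$ and the constancy of $\ad$ are imposed.

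I would then split into the three cases according to the sign of $\delta$ and, in each, exhibit a Jordan basis for $A$ together with the rescaling of $\fu$ that normalizes the eigenvalues. If $\delta=0$, the repeated eigenvalue is $-\ad/2$; choosing $c=2/\ad$ and taking $\tilde{\fz}$ a genuine eigenvector with $\tilde{\fd}$ a generalized eigenvector completing the Jordan block produces form $i)$. If $\delta>0$, both eigenvalues $(-\ad\pm\sqrt{\delta})/2$ are real and negative, and picking $c=2/(\ad+\sqrt{\delta})$ with $\tilde{\fd}$ the eigenvector for the largest-modulus eigenvalue and $\tilde{\fz}$ the other gives form $ii)$ with $a=(\ad-\sqrt{\delta})/(\ad+\sqrt{\delta})\in(0,1)$. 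If $\delta<0$, the eigenvalues $(-\ad\pm i\sqrt{-\delta})/2$ are complex conjugate; the real Jordan form of $A$ is a rotation-with-drift matrix, and taking $c=2/\sqrt{-\delta}$ so as to normalize the imaginary part to $\pm 1$ yields form $iii)$ with $b=\ad/\sqrt{-\delta}>0$.

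The main obstacle is pure bookkeeping: one must check that the ordering of $\tilde{\fz}$ and $\tilde{\fd}$ in the Jordan basis matches the sign pattern of \eqref{eqn:6.8}, so that in case $i)$ the vector $\tilde{\fz}$ sits in the eigenvector slot (and not in the generalized one), in case $ii)$ the computation yields $a\in(0,1)$ rather than its reciprocal, and in case $iii)$ the two real vectors span the correct invariant plane with the right orientation. Once these labellings are correctly fixed the eigenvalue computation produces exactly the expressions claimed for $a$ and $b$, and the proposition follows.
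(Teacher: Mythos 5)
Your proposal is correct and follows essentially the same route as the paper: the paper's proof likewise reduces the statement to bringing $-\text{ad}_{\fu}$ restricted to the abelian ideal $\Span\{\fz,\fd\}$ into real Jordan form and then rescaling $\fu$, splitting into cases by the sign of the discriminant $\delta$; the only difference is that the paper records the resulting change of basis explicitly in \eqref{eqn:6.9} rather than describing it via eigenvectors. The sign/ordering bookkeeping you flag is exactly the residual content of the paper's ``it is just a matter of computations.''
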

		\begin{proof}
			It is just a matter of computations to bring the matrix $A=-\text{ad}_{\fu}$ into its real canonical Jordan form, and subsequently to rescale $\fu$ to normalize
			the structural equations. The explicit change of basis is given by
			\begin{alignat}{5}
				\label{eqn:6.9}
				i) & \left\{\begin{alignedat}{4} \tilde{\fz}& = && \fz+\frac{\ad}{2}\fd,\\
					\tilde{\fu}& = && \frac{2}{\ad}\fu,\\
					\tilde{\fd}& = && \fz;\end{alignedat}\right.\quad\; & 
				ii) & \left\{\begin{alignedat}{4} \tilde{\fz}& = && \fz+\frac{\ad-\sqrt{\delta}}{2}\fd,\\
					\tilde{\fu}& = && \frac{2}{\ad+\sqrt{\delta}}\fu,\\
					\tilde{\fd}& = && \fz+\frac{\ad+\sqrt{\delta}}{2}\fd;\end{alignedat}\right.\quad\; &  
				iii) & \left\{\begin{alignedat}{4} \tilde{\fz}& = && 2\fz+\ad\fd,\\
					\tilde{\fu}& = && \frac{2}{\sqrt{-\delta}}\fu,\\
					\tilde{\fd}& = && \sqrt{-\delta}\fd.\end{alignedat}\right.&                                         
			\end{alignat}
		\end{proof}
		\begin{remark}
			\label{rem:6.4}
			The Lie algebras in the previous proposition correspond respectively to the Lie algebras $A_{3,2}$, $A_{3,5}^{a}$ and $A_{3,7}^b$ in \cite{Patera76}; we recover from there
			the invariants for the Lie algebra generated by $\{h_0,h_1,h_2,h_\infty\}$, endowed with the Poisson brackets. In terms of the 
			metric invariants $\chi$ and $\kappa$ these read
			\begin{alignat}{2}
				\label{eqn:6.10}
				\delta=0)\quad & J=\left(h_0+\frac{\sqrt{\chi-\kappa}}{2}h_2\right)\exp\left(\frac{2h_0}{2h_0+\sqrt{\chi-\kappa}h_2}\right)\crcr
				\delta>0)\quad & K=\left(h_0+\frac{\sqrt{\chi-\kappa}-\sqrt{-\kappa-7\chi}}{2}h_2\right)\times\crcr
				&\hphantom{K} \times\left(h_0+\frac{\sqrt{\chi-\kappa}+\sqrt{-\kappa-7\chi}}{2}h_2\right)^{-\frac{\sqrt{\chi-\kappa}-\sqrt{-\kappa-7\chi}}{\sqrt{\chi-\kappa}+\sqrt{-\kappa-7\chi}}}\crcr
				\delta<0)\quad & L=\left((2h_0+\sqrt{\chi-\kappa}h_2)^2+(\kappa+7\chi)h_2^2\right)\times\crcr
				&\hphantom{L}\times\left(\frac{2h_0+\sqrt{\chi-\kappa}h_2+i\sqrt{\kappa+7\chi}h_2}{2h_0+\sqrt{\chi-\kappa}h_2-i\sqrt{\kappa+7\chi}h_2}\right)^{i\frac{\sqrt{\chi-\kappa}}{\sqrt{\kappa+7\chi}}}
			\end{alignat}
		\end{remark}
		To understand the meaning of these invariants, recall that if $W\in C^\infty(T^*Z)$ is any element belonging to the Lie algebra spanned by the fiber coordinates $h_i$, 
		then for any $\psi:\R\to\C$ there holds
		\[
		\{h_i,\psi(W)\}=\psi'(W)\{h_i,W\}.
		\]
		In particular we have
		\[
		\begin{aligned}
		i)\quad & \{h_i,W^a\}=aW^{a-1}\{h_i,W\},\quad \forall a\in \C\\
		ii)\quad & \{h_i,\exp(W)\}=\exp(W)\{h_i,W\}.
		\end{aligned} 
		\] 
		On the other hand, if $\contact$ is a unimodular structure, from the expression for the canonical frame \eqref{eqn:6.2} it is easy to deduce that
		\begin{equation}
			\label{eqn:6.11}
			I\doteq h_0^2-(\chi-\kappa) h_1^2+(\chi+\kappa)h_2^2 
		\end{equation}
		is a central element in the universal enveloping algebra of the Lie algebra generated by $\{h_0,h_1,h_2,h_\infty\}$ endowed with
		Poisson brackets, that is a \emph{Casimir}.
		\begin{lemma}
			\label{lemma:6.5}
			Let $\contact$ be a left-invariant three dimensional contact structure. Then any conformal rescaling on $\contact$ preserves the algebra of integral of motions
			for the chains' flow on the level set $\{H=0\}$, where $H$ is the Hamiltonian relative to the Fefferman metric built on $\contact$.
		\end{lemma}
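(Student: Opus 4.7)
The plan is to leverage the conformal covariance of the Fefferman construction that the excerpt has already recorded: a rescaling $\mathbf{g}\mapsto e^{2\vphi}\mathbf{g}$ of the sub-Riemannian metric forces $\tilde H = e^{-2\vphi}H$ on the Fefferman Hamiltonian, so the two zero level sets $\{H=0\}$ and $\{\tilde H = 0\}$ coincide (since $e^{-2\vphi}>0$). The corresponding Hamiltonian vector fields on $T^*Z$ are related by the standard Leibniz identity
\[
\vec{\tilde H}\;=\;\vec{e^{-2\vphi}H}\;=\;e^{-2\vphi}\vec{H} + H\,\vec{e^{-2\vphi}},
\]
which on $\{H=0\}$ reduces to the proportionality $\vec{\tilde H}|_{\{H=0\}} = e^{-2\vphi}\vec{H}|_{\{H=0\}}$ quoted before the statement. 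Geometrically this says that the chain flows of $H$ and $\tilde H$ differ only by a positive reparametrization, so first integrals are intuitively shared; the proof will simply verify this algebraically.

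Given an integral of motion $F\in C^\infty(T^*Z)$ of the chains of $H$, namely a function satisfying $\{H,F\}|_{\{H=0\}}=0$, the derivation property of the Poisson bracket yields
\[
\{\tilde H,\,F\}\;=\;\{e^{-2\vphi}H,\,F\}\;=\;e^{-2\vphi}\{H,F\}\,+\,H\,\{e^{-2\vphi},F\}.
\]
On $\{H=0\}$ the first summand vanishes by hypothesis while the second vanishes because of the explicit factor $H$. Hence $\{\tilde H,F\}\equiv 0$ on $\{\tilde H=0\}=\{H=0\}$, so $F$ is a first integral of the rescaled chain flow as well. Swapping the roles of $\vphi$ and $-\vphi$ yields the reverse implication, so the two spaces of first integrals coincide as subsets of $C^\infty(T^*Z)$; since the ambient Poisson bracket on $T^*Z$ is itself unaffected by the rescaling, the algebra structure is transferred automatically, and the claim follows.

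I expect essentially no analytic obstruction here: the lemma is the formal observation that the conserved quantities of a Hamiltonian system depend only on the positive conformal factor in front of the Hamiltonian, once one has restricted to its zero level. The only point requiring mild care is that ``integral of motion on $\{H=0\}$'' must be read in the weak sense $\{H,F\}|_{\{H=0\}}=0$ rather than as the global identity $\{H,F\}\equiv 0$; this is precisely what allows the term $H\,\{e^{-2\vphi},F\}$ to be discarded even though $\{e^{-2\vphi},F\}$ need not vanish. It is this weak formulation that will be exploited in the next sections to promote the invariants $J$, $K$, $L$ of Remark \ref{rem:6.4} and $I$ of \eqref{eqn:6.11} to conformal invariants of the chains.
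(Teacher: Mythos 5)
Your proposal is correct and follows essentially the same route as the paper: the key step is the identical Leibniz decomposition $\{e^{-2\vphi}H,F\}=e^{-2\vphi}\{H,F\}+H\{e^{-2\vphi},F\}$, with both summands vanishing on $\{H=0\}$. Your added remarks (coincidence of the zero level sets, and the reverse inclusion via $-\vphi$) are harmless refinements of what the paper leaves implicit.
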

		\begin{proof}
			The proof relies on the observation that if $P$ is such that 
			\[
			\{P,H\}\big|_{H=0}=0,
			\]
			since a conformal rescaling $\vphi$ on $\contact$ maps $H$ into $e^{-2\vphi}H$, then
			\[
			\{P,e^{-2\vphi}H\}\big|_{H=0}=\{P,e^{-2\vphi}\}H\big|_{H=0}+e^{-2\vphi}\{P,H\}\big|_{H=0}=0\footnotemark,
			\]
			\footnotetext{The way $\vphi$ acts on the level sets of $P$ may be nontrivial, i.e. $\{P,e^{-2\vphi}\}\neq 0$; this is irrelevant if we restrict to the zero level set of $H$.}
			where as usual we are identifying the function $e^{-2\vphi}$ on the manifold $M$ with its pullback $\pi^*(e^{-2\vphi})$ as a constant on fiber function on $T^*Z$.
		\end{proof}
		\begin{remark}
			\label{rem:6.6}
			A similar statement holds true even for the algebra of the elements commuting with $h_\infty$. The argument in this case is much simpler, since $h_\infty$ is not
			affected by conformal rescalings on $\contact$.
		\end{remark}
		\begin{corollary}
			\label{cor:6.7}
			Let $\contact$ be a left-invariant three dimensional contact structure. Assume that $P$ is a smooth function on $T^*Z\cong T^*(M\times S^1)$ commuting both with $H$ and
			$h_\infty$ on the set $\{H=0\}$. Then any conformal rescaling on $\contact$ fiberwise preserves the foliation induced on the $\{h_1,h_2\}$-plane generated by the intersection
			of $H=0$, $h_\infty=1$ and $P$.
		\end{corollary}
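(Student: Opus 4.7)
\textbf{Proof plan for Corollary \ref{cor:6.7}.} The corollary is essentially a bookkeeping consequence of Lemma \ref{lemma:6.5} together with Remark \ref{rem:6.6}, and the plan is to verify that each of the three conditions $H=0$, $h_\infty = 1$, and $P = c$ cutting out a leaf of the foliation is jointly preserved under an arbitrary conformal rescaling $\vphi$ of $\contact$.

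First, as recalled immediately before \eqref{eqn:6.5}, the rescaling takes the Fefferman Hamiltonian $H$ into $H_\vphi = e^{-2\vphi}H$, so that $\{H = 0\} = \{H_\vphi = 0\}$: the zero level set is preserved even though $H$ is not. Second, the vertical direction $\finf = \partial/\partial\gamma$ on $Z\cong M\times S^1$ is built into the bundle structure and is not affected by a conformal change of metric on the base, so $h_\infty$ is the same fiber-linear function on $T^*Z$ both before and after the rescaling; in particular the hyperplane $\{h_\infty = 1\}$ is invariant. Third, $P$ is the same smooth function on $T^*Z$ in both pictures, so the subsets $\{P=c\}$ for $c\in\R$ are unchanged tautologically.

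The nontrivial content is then provided by Lemma \ref{lemma:6.5} and Remark \ref{rem:6.6}, which say respectively that on $\{H=0\}$ one still has $\{P,H_\vphi\}=0$ and $\{P,h_\infty\}=0$. This ensures that, for the rescaled metric as well, the family $\{H_\vphi=0,\,h_\infty=1,\,P=c\}_{c\in\R}$ is genuinely a foliation of the hypersurface $\{H=0,\,h_\infty=1\}$, invariant under the chain flow $\Vec{H}_\vphi$. In each fiber $T^*_zZ$ the two constraints $H=0$, $h_\infty=1$ generically express $h_0$ as a function of $(h_1,h_2)$, so the 2D slice projects diffeomorphically onto the $(h_1,h_2)$-plane and the foliation descends to it.

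Since all three ingredients — the zero set $\{H=0\}$, the coordinate $h_\infty$, and the function $P$ — are individually preserved under the rescaling, the induced foliation on the $(h_1,h_2)$-plane is preserved fiberwise, which is the claim. The only delicate point in this argument is to be careful about which quantities are frame-dependent and which are not: the coordinates $h_1,h_2$ themselves do transform nontrivially under rescaling according to Lemma \ref{lemma:4.3}, but the set-theoretic intersection defining each leaf is intrinsic, and it is this intrinsic invariance that makes the statement meaningful. Beyond this, no further computation is required.
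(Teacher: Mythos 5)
Your argument is correct and follows essentially the same route as the paper's own proof: observe that the rescaling sends $H$ to $e^{-2\vphi}H$ so that $\{H=0\}$ is unchanged, that $h_\infty$ is untouched (and normalizable to $1$ by momentum scaling), and then invoke Lemma \ref{lemma:6.5} and Remark \ref{rem:6.6} to conclude the foliation is preserved. Your additional remarks on how the constraints let the foliation descend to the $\{h_1,h_2\}$-plane and on the frame-dependence of $h_1,h_2$ are sensible elaborations of points the paper leaves implicit, but they do not change the substance of the argument.
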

		\begin{proof}
			Let $\vphi$ be any conformal rescaling of $\contact$ and let us fix a fiber $T^*_{(q,\gamma)}Z$. $H$ rescales as $e^{-2\vphi(q)}H$, therefore its zero level set remains
			unchanged; moreover $h_\infty$ is not affected by $\vphi$, hence via momentum scaling can always be renormalized to be equal to one. Using lemma \ref{lemma:6.5} and remark
			\ref{rem:6.6} we conclude that the foliation generated on the $\{h_1,h_2\}$-plane by $P$, $H=0$ and $h_\infty=1$ must be preserved by $\vphi$.
		\end{proof}
		\begin{remark}
			\label{rem:6.8}
			In general it is not true that each leaf will be preserved by $\vphi$; in fact it may very well happen that leaves, which were originally indexed along the level sets
			$P=\text{const}$, are now mixed according to some smooth automorphism $a:\C\to \C$ induced by the rescaling $\vphi$ and are accordingly labelled as level sets $a(P)=\text{const}$.  
			In any case, since any conformal rescaling of $\contact$ must preserve the foliation induced by $P$ on the set $\{H=0\}\cap\{h_\infty=1\}$, we deduce that it must also
			preserve the (normalized) distribution $\frac{\nabla P}{|\nabla P|}$ attached to any of these points.
			This will provide us with an effective tool in order to carry out the classification of left-invariant three dimensional contact
			structures.
		\end{remark}
		\begin{theorem}
			\label{thm:6.9}
			Let $\contact$ be a left-invariant three dimensional contact structure.\newline If $\contact$ is unimodular, then the distribution associated with the foliation
			generated by $H=0$, $h_\infty=1$ and $I$ on the $\{h_1,h_2\}$-plane and attached to the point $(1,1)$, is functionally dependent on
			\[
			\chi+\frac12\kappa\quad\text{and}\quad \chi-\frac12\kappa.
			\]
			If $\contact$ is a $\mathfrak{solv}^+$ structure, then the distribution associated with the foliation
			generated by $H=0$, $h_\infty=1$ and either $J$, $K$ or $L$ (depending on the sign of $\delta$, cfr. \eqref{eqn:6.10}) on the $\{h_1,h_2\}$-plane and attached to the point $(1,1)$, 
			is functionally dependent on 
			\begin{alignat*}{10}
				& (\delta=0)\quad && \sqrt{\chi-\kappa}\quad && \text{and}\quad  \kappa+8\chi\\
				& (\delta<0)\quad && \sqrt{\chi-\kappa},\quad&& \sqrt{-\kappa-7\chi}\quad  \text{and}\quad  \kappa+8\chi\\
				& (\delta>0)\quad && \sqrt{\chi-\kappa},\quad&& \sqrt{\kappa+7\chi}\quad  \text{and}\quad  \kappa+8\chi.
			\end{alignat*}
		\end{theorem}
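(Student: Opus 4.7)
The plan is to make the distribution $\nabla P/|\nabla P|$ of Corollary~\ref{cor:6.7} explicit in each case and verify by direct computation that its components at $(h_1,h_2)=(1,1)$ depend only on the invariants listed.

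First I would restrict to $\{H=0\}\cap\{h_\infty=1\}$. In the unimodular case \eqref{eqn:6.5} is linear in $h_0$ once $h_\infty=1$, so
\[
h_0 \;=\; -\tfrac{1}{3}\bigl(h_1^2+h_2^2\bigr)+\tfrac{3}{4}\kappa;
\]
substituting this into the Casimir $I$ of \eqref{eqn:6.11} yields a concrete function $\tilde I(h_1,h_2)$. A short calculation gives
\[
\partial_{h_1}\tilde I\big|_{(1,1)} \;=\; \tfrac{8}{9}-2\bigl(\chi-\tfrac{1}{2}\kappa\bigr),\qquad
\partial_{h_2}\tilde I\big|_{(1,1)} \;=\; \tfrac{8}{9}+2\bigl(\chi+\tfrac{1}{2}\kappa\bigr),
\]
so that the normalized gradient at $(1,1)$ is a function of $\chi\pm\tfrac{1}{2}\kappa$ alone, as claimed.

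For a $\mathfrak{solv}^+$ structure the chain equation \eqref{eqn:6.6} with $h_\infty=1$ is again linear in $h_0$ and yields
\[
h_0(h_1,h_2) \;=\; -\tfrac{1}{3}\!\left(h_1^2+h_2^2+2\sqrt{\chi-\kappa}\,h_2-\tfrac{1}{4}(\kappa+8\chi)\right).
\]
I would then substitute this into the appropriate conserved quantity $J$, $K$ or $L$ from \eqref{eqn:6.10} according to the sign of $\delta=-\kappa-7\chi$, compute the two partial derivatives at $(1,1)$, and simplify. In each subcase the remaining parameters are precisely $\sqrt{\chi-\kappa}$, the square root of $\pm(\kappa+7\chi)$, and the combination $\kappa+8\chi$ inherited from $h_0$, i.e. exactly the invariants in the statement.

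The main obstacle is the algebraic bookkeeping in the $K$ and $L$ cases: $K$ is a product of a linear form with a transcendental power whose exponent itself depends on $\chi$ and $\kappa$, and $L$ carries an imaginary exponent, so the chain-rule expansion of $\partial_{h_j}$ produces many terms. What makes it tractable is that at $(h_1,h_2)=(1,1)$ the cumbersome factors inside the exponents reduce to constants expressible in the listed invariants alone; after this simplification the rational parts of the expressions collapse in the same way. Once the three $\mathfrak{solv}^+$ subcases and the unimodular case are verified, Corollary~\ref{cor:6.7} forces any conformal rescaling of $\contact$ to preserve the computed directions, so the conformal classification is determined by $(\chi,\kappa)$, which is the content of Theorem~\ref{thm:1.2}.
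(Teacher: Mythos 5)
Your proposal follows the same route as the paper: restrict to $\{H=0\}\cap\{h_\infty=1\}$, solve the chain equation for $h_0$ (your substituted expression for $I$ reproduces the paper's formula exactly, as do the two partial derivatives at $(1,1)$), then plug into $J$, $K$ or $L$ and check which metric constants the gradient at $(1,1)$ depends on. This is essentially identical to the paper's proof, including the reliance on Corollary~\ref{cor:6.7} and Remark~\ref{rem:6.8} to convert the computation into the conformal rigidity statement.
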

		\begin{proof}
			Let $\contact$ be a unimodular structure. The foliation presented in the statement of the theorem is given by
			\[
			I=\frac{9}{16}\kappa^2-\left(\chi-\frac12\kappa\right)h_1^2+\left(\chi+\frac12\kappa\right)h_2^2+\frac19(h_1^2+h_2^2)^2.
			\]
			We then evaluate
			\[
			\left(
			\begin{array}{c}
			\partial_{h_1} I\\
			\\
			\partial_{h_2} I
			\end{array}
			\right)_{(1,1)} = \left(
			\begin{array}{c}
			\frac89-2\left(\chi-\frac\kappa 2\right)\\
			\\
			\frac89+2\left(\chi+\frac\kappa 2\right);
			\end{array}
			\right)
			\]
			and this proves the first part of the theorem.\newline If instead $\contact$ is a $\mathfrak{solv}^+$ structure, the foliation is given considering the level sets of either
			$J=\text{const}$, $K=\text{const}$ or $L=\text{const}$ (according to the sign of $\delta$), subject to the constraints
			\[
			h_\infty=1\quad \text{and}\quad h_0=-\frac13(h_1^2+h_2^2+2\sqrt{\chi-\kappa}h_2)+\frac{1}{12}(\kappa+8\chi).
			\]
			Unlike in the unimodular case, calculations are quite messy for this class of structures; nonetheless the explicit computation of
			\[
			\nabla X_{h_1,h_2}\big |_{(1,1)},\quad X=\{J,K,L\},
			\]
			shows that its entries are functionally dependent on the metric constants claimed in the statement of the theorem, and this completes the proof.
		\end{proof}
		\begin{corollary}
			\label{cor:6.10}
			Let $\contact$ be a left-invariant three dimensional contact structure satisfying $\alpha\neq 0$, i.e. let the Fefferman metric associated to $\contact$ not be locally conformally flat. The conformal classification of $\contact$ is then determined by the metric
			invariants $\chi$ and $\kappa$; in particular, once a normalization is fixed, it coincides with the metric one.
		\end{corollary}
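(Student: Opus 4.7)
The plan is to leverage Theorem \ref{thm:6.9} together with the metric classification of Theorem \ref{thm:1.1}. The starting observation, recorded in Corollary \ref{cor:6.7} and Remark \ref{rem:6.8}, is that any conformal rescaling of $\contact$ must preserve the normalized gradient $\nabla P/|\nabla P|$ attached to the point $(1,1)$ in the $\{h_1,h_2\}$-plane, where $P$ is $I$ in the unimodular case or one of $J,K,L$ (according to the sign of $\delta$) in the $\mathfrak{solv}^+$ case. Theorem \ref{thm:6.9} then reads off the functional dependence of this gradient on prescribed combinations of $\chi$ and $\kappa$: namely on $\chi\pm\tfrac12\kappa$ in the unimodular case and on $\sqrt{\chi-\kappa}$, $\kappa+8\chi$, together with $\sqrt{|\kappa+7\chi|}$ (as appropriate) in the $\mathfrak{solv}^+$ case. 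These combinations are therefore themselves conformal invariants, modulo the overall scaling absorbed into the chosen normalization.

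Second, I would verify that once a normalization such as $\chi^2+\kappa^2=1$ is fixed, the invariants listed above recover the pair $(\chi,\kappa)$ uniquely. In the unimodular case this is a trivial $2\times 2$ linear inversion from $\chi\pm\tfrac12\kappa$. In the $\mathfrak{solv}^+$ case one inspects each Jordan subcase of Proposition \ref{prop:6.3}: for $\delta=0$ the pair $(\sqrt{\chi-\kappa},\,\kappa+8\chi)$ already suffices, and for $\delta\ne 0$ the additional square-root invariant only adds redundancy. Throughout the analysis, the standing assumption $\alpha\neq 0$ guarantees that no denominators collapse and that one stays on the correct branch. The $\mathfrak{solv}^-$ case is handled by the symmetry observed at the start of Section \ref{S_Classification} and introduces nothing new.

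Third, I would close the argument by invoking Theorem \ref{thm:1.1}. Under $\alpha\ne 0$ we are, in each case, inside the regime where the metric invariants $(\chi,\kappa)$ by themselves determine the local isometry class: in the non-unimodular setting $\chi\ne 0$ holds automatically under the choice of the canonical frame, while in the unimodular setting Remark \ref{rem:4.6} gives $\alpha=\tfrac32\chi\kappa$, so $\alpha\ne 0$ forces both $\chi\ne 0$ and $\kappa\ne 0$, excluding the pathological region $\chi=0,\kappa<0$ where several non-isomorphic Lie algebras share the same invariants. Combining the two directions — conformally equivalent structures share $(\chi,\kappa)$, and equal $(\chi,\kappa)$ forces local isometry, while local isometry is trivially a conformal equivalence — one obtains the coincidence of the two classifications.

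The main obstacle I expect is the verification in the second step: one has to check, in each of the three $\mathfrak{solv}^+$ Jordan subcases, that the tuple of invariants extracted from Theorem \ref{thm:6.9} is genuinely an injective function of $(\chi,\kappa)$ on the admissible normalized parameter set, i.e.\ that no two distinct $(\chi,\kappa)$ produce accidentally the same conformal data. This amounts to a finite, slightly tedious case analysis rather than to a conceptual difficulty, but it is where the bookkeeping is heaviest.
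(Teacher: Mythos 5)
Your first two steps are essentially the paper's own proof: the author likewise combines Corollary \ref{cor:6.7} and Remark \ref{rem:6.8} with Theorem \ref{thm:6.9} to conclude that any conformal map must preserve the tangent distribution to the foliation at $(1,1)$, hence the listed combinations of $\chi$ and $\kappa$, and then inverts these (using the normalization $\sqrt{\chi-\kappa}=1$ in the $\mathfrak{solv}^+$ case, splitting on $\delta=0$ versus $\delta\neq 0$, and $\chi^2+\kappa^2=1$ in the unimodular case) to see that $\chi$ and $\kappa$ themselves are preserved. So the core of your argument matches the paper.

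One caveat on your third step, which the paper leaves implicit: your assertion that ``equal $(\chi,\kappa)$ forces local isometry'' is not what Theorem \ref{thm:1.1} says. That theorem allows up to \emph{three} non-isometric normalized structures sharing the same invariants (distinguished by their Lie algebras), and the region $\chi=0,\ \kappa<0$ that you single out as pathological is in fact the one place where non-isomorphic groups ($A(\R)\oplus\R$ and $SL_e(2)$) carry locally \emph{isometric} structures --- i.e.\ the opposite of an obstruction. The identification of the conformal and metric classifications should instead be read family by family (unimodular, $\mathfrak{solv}^+$, $\mathfrak{solv}^-$), where the normalized canonical frame, and hence the local isometry class, is uniquely determined by $(\chi,\kappa)$; this is how the paper's per-family argument closes the loop. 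With that correction your outline is sound and follows the same route as the paper.
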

		\begin{proof}
			Let us recall that for a $\mathfrak{solv}^+$ structure
			\begin{equation}
				\label{eqn:6.12}
				\alpha=-\frac\chi6(\kappa+8\chi).
			\end{equation}
			Since $\chi\neq 0$ by the choice of a canonical frame \eqref{eqn:6.1} for $\contact$, the condition $\alpha\neq 0$ implies that the coefficient $\kappa+8\chi$ is non
			zero. Normalize $\contact$ so that $\ad=\sqrt{\chi-\kappa}=1$; there are then two cases to be distinguished.\newline If $\delta=0$, by theorem \ref{thm:6.9} we know that
			the distribution tangent to the foliation $J$ and attached to the point $(1,1)$ functionally depends on $\kappa+8\chi$; moreover it is preserved by 
			any conformal map $\Phi$ on $\contact$, which means that $\Phi$ does not affect some function $f$ on $\kappa+8\chi$. Since the condition $\delta=0$ says that $\kappa$ and $\chi$ are linearly
			dependent, this implies that the conformal classification of $\contact$ coincides with the metric one.\newline If $\delta\neq 0$ we consider the foliation induced either by $K$ or $L$; theorem \ref{thm:6.9} says in
			this case that the distribution attached to the foliation at the point $(1,1)$, functionally depends on $\kappa+7\chi$ and $\kappa+8\chi$ any conformal map $\Phi$ on $\contact$ preserves then some independent functions $g$ on $\kappa+7\chi$ and $f$ on $\kappa+8\chi$. If $\alpha\neq 0$, as above we thus conclude that the conformal classification of $\contact$ is determined by its metric invariants.
			\newline Switch now to a unimodular structure $\contact$, and normalize it so that $\kappa^2+\chi^2=1$. In this case
			\begin{equation}
				\label{eqn:6.13}
				\alpha=-\frac32 \kappa\chi,
			\end{equation}
			then if $\alpha\neq 0$, neither $\kappa$ nor $\chi$ can be equal to zero. The foliation is indexed on the $\{h_1,h_2\}$-plane by the level sets of the smooth function
			$I$; the tangent distribution attached at the point $(1,1)$ to the foliation functionally depends on $\kappa\pm\frac12\chi$ by theorem \ref{thm:6.9}.
			Since any conformal map $\Phi$ on $\contact$ must preserve some independent functions $f$ and $g$ on these coefficients, if the Fefferman metric associated to $\contact$ is not locally conformally flat, the conclusion of the corollary follows once again.
		\end{proof}
		
		\section{Heisenberg conformal group}
		\label{S_Conformal_Group_Heisenberg}
		
		In this section we will work out the full computation of the conformal group $Conf(\Heis_3)$ and, in turn, of all the locally conformally flat structures found in the previous sections. We recall that
		the group multiplication law for $\Heis_3$ reads
		\[
		(x_1,y_1,z_1)\star(x_2,y_2,z_2)=(x_1+x_2,y_1+y_2,z_1+z_2+\frac12(x_1y_2-x_2y_1) )
		\]
		and that a local frame for the Lie algebra $\mathfrak h_3$ is given by
		\[
		\fu=\funo,\quad \fd=\fdue,\quad\fz=\pz.
		\]
		Return for the moment to the general setting. Assume that $X\in\Vector(M)$; we can associate to $X$ a linear on fibers function $X^*\in C^\infty(T^*M)$ via the requirement 
		\[
		X^*(\lambda)\doteq\langle\lambda, X(q)\rangle,\quad \lambda\in T_q^*M,\quad q=\pi(\lambda).
		\]
		Consider the Hamiltonian lift $\Vec{X^*}$ of $X^*$, which is defined by
		\begin{equation}
			\label{eqn:7.1}
			\iota_{\Vec{X^*}}\sigma=-\ud X^*,
		\end{equation}
		where $\sigma=\ud s$ denotes the symplectic form on the cotangent bundle $T^*M$ and $s=\sum_{i=0}^2 h_i \nu^i$ denotes its tautological 
		one form.\newline It turns out in this case that the lift to the cotangent bundle of the flow generated by $X$ is nicely
		related to the flow generated by $\Vec{X^*}$:
		\begin{proposition}
			\label{prop:7.1}
			Let $X\in\Vector(M)$ be an autonomous vector field. Then
			\begin{equation}
				\label{eqn:7.2}
				\left(e^{tX}\right)^*=e^{-t\Vec{X^*}}.
			\end{equation}
		\end{proposition}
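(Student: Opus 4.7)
The plan is to verify \eqref{eqn:7.2} by comparing infinitesimal generators. Both maps $t\mapsto (e^{tX})^*$ and $t\mapsto e^{-t\Vec{X^*}}$ are smooth one-parameter subgroups of $\mathrm{Diff}(T^*M)$ which reduce to the identity at $t=0$, so it is enough to show that the generator $Y\doteq \frac{\ud}{\ud t}\big|_{t=0}(e^{tX})^*$ of the left-hand side agrees with $-\Vec{X^*}$. In view of \eqref{eqn:7.1} and the non-degeneracy of $\sigma$, this will follow once I establish $\iota_Y\sigma = \ud X^*$.

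Two ingredients feed the calculation. First, the cotangent pullback is contravariant: since $(e^{tX})^*$ carries $T^*_{e^{tX}(q)}M$ to $T^*_qM$, the projection relation $\pi\circ(e^{tX})^* = e^{-tX}\circ \pi$ holds, and differentiating at $t=0$ yields $\pi_* Y = -X\circ \pi$. Second, the classical fact that the cotangent lift of every diffeomorphism preserves the tautological one-form $s$ of $T^*M$ (a one-line check from the definition of $s$ and the chain rule) gives $((e^{tX})^*)^* s = s$ for every $t$, and hence $\lie_Y s = 0$ upon differentiating at $t=0$.

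Combining these via Cartan's formula,
\[
0 \;=\; \lie_Y s \;=\; \ud(\iota_Y s) + \iota_Y \sigma,
\]
reduces the problem to evaluating the scalar $\iota_Y s$. By the tautological property of $s$, at a point $\lambda\in T^*M$ one has $(\iota_Y s)(\lambda) = \langle \lambda, \pi_* Y_\lambda\rangle = -\langle \lambda, X(\pi(\lambda))\rangle = -X^*(\lambda)$. Substituting back, $\iota_Y \sigma = \ud X^*$, which coincides with $-\iota_{\Vec{X^*}}\sigma$ by \eqref{eqn:7.1}; non-degeneracy of $\sigma$ then forces $Y = -\Vec{X^*}$, as desired.

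I do not foresee any genuine obstacle: the argument is essentially a tautological one-form calculation combined with Cartan's formula. The only point demanding care is sign and direction bookkeeping, since $(e^{tX})^*$ is contravariant and therefore covers $e^{-tX}$ on $M$; this inversion is precisely what produces the minus sign in the exponent on the right-hand side of \eqref{eqn:7.2}.
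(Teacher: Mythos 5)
Your argument is correct and complete. The paper itself does not prove this proposition but defers to \cite{Book}, Chapter 11, so there is no internal proof to compare against; the route you take --- showing that the generator $Y$ of $t\mapsto (e^{tX})^*$ satisfies $\pi_*Y=-X\circ\pi$ and $\lie_Y s=0$, then using the tautological identity $\iota_Y s=-X^*$ together with Cartan's formula and the sign convention \eqref{eqn:7.1} to conclude $Y=-\Vec{X^*}$ --- is the standard symplectic-geometry proof and the sign bookkeeping (the contravariance of the pullback producing the $-t$ in the exponent) is handled correctly.
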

		For a proof and further details see \cite{Book}, chapter $11$.
		\begin{definition}[Conformal sub-Riemannian vector fields]
			\label{def:7.2}
			Given $X\in\Vector(M)$, we say that $X$ is an infinitesimal conformal sub-Riemannian vector field if the flow generated by $X$
			preserves the conformal class of the sub-Riemannian Hamiltonian $h$, i.e. it is a conformal sub-Riemannian map. In particular 
			\[
			\Phi_t\doteq e^{tX}
			\]
			satisfies
			\[
			\left(\Phi_t\right)^*h=g^th, \quad g^t:M\to\R,\quad g^t\in C^\infty(M)\;\;\forall t\in\R.
			\]
			Thanks to \eqref{eqn:7.2} we have also the infinitesimal counterpart of this definition, that is
			\[
			\{X^*,h\}=2\eta h, \quad \eta:M\to\R,\quad \eta\doteq -\frac12\frac{\ud}{\ud t}\bigg|_{t=0} g^t\in C^\infty(M).
			\]
			The subgroup of conformal sub-Riemannian maps for which there holds
			\[
			\left(\Phi_t\right)^*h\equiv h,\quad\forall t
			\]
			are called \emph{sub-Riemannian isometries}, and their generators are called \emph{infinitesimal sub-Riemannian isometries}.
		\end{definition}
		
		\begin{proposition}
			\label{prop:7.3}
			Let
			\[
			X=\sum_{i=0}^2 a^if_i\in\Vector(M)
			\]
			be a conformal sub-Riemannian vector field. Its coefficients $a^i:M\to\R$ then satisfy
			the following system of differential equations:
			\begin{align}
				\label{eqn:7.3}
				&\fu(a^0)-a^2=0,\crcr
				&\fd(a^0)+a^1=0,\crcr
				&\fu(a^1)-\au a^2=-\eta,\crcr
				&\fd(a^2)+\ad a^1=-\eta,\crcr
				&\fu(a^2)+\fd(a^1)+\au a^1-\ad a^2+2\chi a^0=0.
			\end{align}
		\end{proposition}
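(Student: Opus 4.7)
The plan is to transcribe the identity $\{X^*, h\} = 2\eta h$ into coordinates on the fiber and read off the system by matching coefficients in the fiber variables $(h_0, h_1, h_2)$. Since $X^* = a^0 h_0 + a^1 h_1 + a^2 h_2$ is linear on fibers and $h = \tfrac{1}{2}(h_1^2 + h_2^2)$ is quadratic, $\{X^*, h\}$ is a homogeneous quadratic polynomial in $(h_0,h_1,h_2)$. Equality with $2\eta h = \eta(h_1^2+h_2^2)$, which involves only $h_1^2$ and $h_2^2$, is therefore equivalent to equality of the five coefficients attached to the independent monomials $h_0 h_1$, $h_0 h_2$, $h_1^2$, $h_1 h_2$, $h_2^2$ — exactly the number of equations asserted in \eqref{eqn:7.3}.

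First I would expand $\{X^*,h\}$ by bilinearity and the Leibniz rule, splitting each term into a piece where the bracket falls on the $h_i$ factors and a piece where it falls on the base functions $a^i$. For the latter one uses the standard identity $\{a, h_j\} = -f_j(a)$ valid for any $a \in C^\infty(M)$ lifted as a constant-on-fibers function, so that $\{a^i, h\} = -h_1 f_1(a^i) - h_2 f_2(a^i)$. For the former one needs the elementary brackets $\{h_i, h_j\} = h_{[f_i,f_j]}$, which are read off directly from the structural equations \eqref{eqn:2.4}. Under the canonical frame convention used throughout the paper (so that $\atu = \aqd = 0$ and $\atd + \aqu = 2\chi$, cfr.\ Propositions \ref{prop:2.3}--\ref{prop:2.4} and Remark \ref{rem:2.6}) these simplify to
\[
\{h_1,h_2\} = -h_0 - \au h_1 - \ad h_2, \qquad \{h_1,h_0\} = \atd\, h_2, \qquad \{h_2,h_0\} = \aqu\, h_1.
\]

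Second, I would substitute these into the expanded expression and regroup. The coefficient of $h_0 h_1$ produces $a^2 - \fu(a^0)$, whose vanishing gives the first equation; the coefficient of $h_0 h_2$ produces $-a^1 - \fd(a^0)$, giving the second. The coefficients of $h_1^2$ and $h_2^2$ must equal $\eta$ and yield the third and fourth equations after using $\atu=\aqd=0$. Finally, the coefficient of $h_1 h_2$ assembles the remaining terms; using the crucial identification $\atd+\aqu = 2\chi$ one obtains the last equation of the system.

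The calculation involves no real obstacle beyond careful bookkeeping of signs in the Poisson brackets, and the only conceptual ingredient is the polynomial-degree observation above that reduces a single identity in $C^\infty(T^*M)$ to a finite list of scalar PDE's on $M$. The main potential pitfall is forgetting that two of the structural coefficients vanish under the canonical frame — without this, the equations for the $h_1^2$ and $h_2^2$ coefficients would carry extra terms $\atu a^0$ and $\aqd a^0$ which are absent from the statement.
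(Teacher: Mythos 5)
Your proposal is correct and follows essentially the same route as the paper: expand $\{X^*,h\}=2\eta h$ by the Leibniz rule using $\{h_i,h_j\}=[f_i,f_j]^*$ and $\{a,h_j\}=-f_j(a)$, then match coefficients of the five independent quadratic monomials in $(h_0,h_1,h_2)$. You are also right that the stated system presupposes the canonical frame normalizations $\atu=\aqd=0$ and $\atd+\aqu=2\chi$ of Propositions \ref{prop:2.3}--\ref{prop:2.4} and Remark \ref{rem:2.6} — the paper uses these tacitly, and your explicit flagging of the otherwise-present terms $\atu a^0$ and $\aqd a^0$ is a worthwhile clarification rather than a deviation.
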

		\begin{proof}
			By definition \ref{def:7.2}, $X$ is an infinitesimal conformal sub-Riemannian vector field if, and only if, $\{X^*,h\}=2\eta h$.
			By equation \eqref{eqn:7.1} we have \[X^*=\sum_{i=0}^2 a^i h_i;\]recall
			that we may interpret the smooth functions $a^i:M\to\R$ as constant on fibers functions on the cotangent bundle via the pullback
			provided by the natural projection $\pi: T^*M\to M$; by an abuse of notation we will continue to denote these functions with $a^i=\pi^*\left(a^i\right)$.
			By the Leibnitz rule for the Poisson brackets and the fact that $\{h_i,h_j\}=[f_i,f_j]^*$, the equation $\{X^*,h\}=2\eta h$ then reads
			\[
			\begin{split}
			\{X^*,h\}&=\left(-\fu(a^0)+a^2\right)h_0h_1+\left(-a^1-\fd(a^0)\right)h_0h_2\\
			&+\left(-\fu(a^1)+\au a^2\right)h_1^2+\left(-\fd(a^2)-\ad a^1\right)h_2^2\\
			&+\left(-\fu(a^2)-\fd(a^1)-\au a^1+\ad a^2-2\chi a^0\right)h_1h_2\\
			&=\eta h_1^2+\eta h_2^2.
			\end{split}
			\]
			The conclusion is now evident.
		\end{proof}
		We focus now on the Heisenberg algebra $\mathfrak h_3$. The first step is to characterize all the possible functions $\eta$ which
		may appear in definition \ref{def:7.2}. Since in this case all the structural coefficients are $0$, it is possible to rewrite the system \eqref{eqn:7.3}
		in the equivalent way
		\begin{equation}
			\label{eqn:7.4}
			\left\{
			\begin{aligned}
				&\fu(a^0)-a^2=0,\crcr
				&\fd(a^0)+a^1=0,\crcr
				&\fu(a^1)=-\eta,\crcr
				&\fd(a^2)=-\eta,\crcr
				&\fd(a^1)=\omega,\crcr
				&\fu(a^2)=-\omega,
			\end{aligned}
			\right.
		\end{equation}
		where $\omega$ is some smooth scalar function defined on $M$. In particular we have the following
		\begin{theorem}
			\label{thm:7.4}
			Assume that $\{\fz,\;\fu,\;\fd\}$ span the Lie algebra $\mathfrak h_3$. The system \eqref{eqn:7.4} has at least a local solution if, and only if,
			the smooth function $\eta: M\to\R$ satisfies the following set of partial differential equations:
			\begin{equation}
				\label{eqn:7.5}
				\left\{
				\begin{array}{lll}
					\fz(\fu(\eta))=0,&&\\
					\fz(\fd(\eta))=0,&&\\
					\fu(\fu(\eta))=0,&&\\
					\fd(\fd(\eta))=0,&&\\
					\fu(\fd(\eta))+\fd(\fu(\eta))=0.&&
				\end{array}
				\right.
			\end{equation}
		\end{theorem}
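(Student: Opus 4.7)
The plan is to treat the six equations comprising system \eqref{eqn:7.4} as an overdetermined first-order linear system in the four unknowns $a^0, a^1, a^2, \omega$, with $\eta$ playing the role of prescribed data, and to extract its complete list of integrability conditions using the Heisenberg brackets $[\fd,\fu]=\fz$, $[\fu,\fz]=[\fd,\fz]=0$. The content of the theorem is then precisely that these integrability conditions collapse to \eqref{eqn:7.5}, after which local existence of a solution follows from Frobenius' theorem.

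For the necessity direction, I would proceed in three stages. First, apply $[\fd,\fu]a^0=\fz(a^0)$ together with the two prescribed derivatives of $a^0$ in \eqref{eqn:7.4} to extract the hidden derivative $\fz(a^0)=-2\eta$; commuting $\fz$ past $\fu$ and past $\fd$ then yields $\fz(a^2)=-2\fu(\eta)$ and $\fz(a^1)=2\fd(\eta)$. Second, repeat the same commutator analysis for $a^1$ and $a^2$: from the $a^1$ equations I would obtain $\fu(\omega)=-3\fd(\eta)$, $\fz(\omega)=2\fd\fd(\eta)$ and the identity $\fz(\eta)=-2\fu\fd(\eta)$, while from the $a^2$ equations I would obtain $\fd(\omega)=3\fu(\eta)$, $\fz(\omega)=2\fu\fu(\eta)$ and $\fz(\eta)=2\fd\fu(\eta)$; comparing the two formulas for $\fz(\eta)$ immediately gives $\fu\fd(\eta)+\fd\fu(\eta)=0$. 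Third, test the Frobenius conditions on $\omega$ itself: $[\fd,\fu]\omega=\fz(\omega)$ combined with the two available expressions for $\fz(\omega)$ forces $\fu\fu(\eta)=\fd\fd(\eta)=0$, while $[\fz,\fu]\omega=0$ and $[\fz,\fd]\omega=0$ force $\fz\fu(\eta)=\fz\fd(\eta)=0$. These are precisely the five equations in \eqref{eqn:7.5}.

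For the sufficiency direction, I would reverse the construction. Given $\eta$ satisfying \eqref{eqn:7.5}, first build $\omega$ by prescribing its three first derivatives $\fu(\omega)=-3\fd(\eta)$, $\fd(\omega)=3\fu(\eta)$, $\fz(\omega)=0$; the three commutator compatibility relations on these data are immediate consequences of \eqref{eqn:7.5}, so Frobenius yields a local primitive $\omega$. Next construct $a^1$ and $a^2$ from their three prescribed derivatives, namely $\fu(a^1)=-\eta$, $\fd(a^1)=\omega$, $\fz(a^1)=2\fd(\eta)$ and analogously for $a^2$: the identity $\fz(\eta)=2\fd\fu(\eta)=-2\fu\fd(\eta)$, which follows from condition (v) of \eqref{eqn:7.5} combined with $[\fd,\fu]\eta=\fz(\eta)$, makes the $[\fz,\fu]$ and $[\fz,\fd]$ cross-checks vanish, while the $[\fd,\fu]$ check reduces to the defining relations of $\omega$. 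Finally set $\fu(a^0)=a^2$, $\fd(a^0)=-a^1$, $\fz(a^0)=-2\eta$; the three commutator identities reduce to the relations already imposed on $a^1, a^2$ and $\eta$, so Frobenius produces $a^0$.

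The subtle point, I expect, lies in the necessity direction: one must detect that $\fu\fu(\eta)$ and $\fd\fd(\eta)$ vanish \emph{separately}, not merely in combination. This requires confronting the two independent expressions $2\fd\fd(\eta)$ and $2\fu\fu(\eta)$ for $\fz(\omega)$, coming from the $a^1$ and $a^2$ compatibilities, with the $[\fd,\fu]\omega$ relation $\fz(\omega)=-3(\fd\fd(\eta)+\fu\fu(\eta))$. Together these force $\fu\fu(\eta)=\fd\fd(\eta)$ and $2\fu\fu(\eta)=-6\fu\fu(\eta)$, making both vanish. Once this piece of bookkeeping is noted, the remaining identities are mechanical commutator algebra, and the appeal to Frobenius at each integration step is routine.
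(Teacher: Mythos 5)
Your proposal is correct and follows essentially the same route as the paper: it extracts the identical integrability conditions (the hidden derivatives $\fz(a^0)=-2\eta$, $\fz(a^1)=2\fd(\eta)$, $\fz(a^2)=-2\fu(\eta)$, $\fu(\omega)=-3\fd(\eta)$, $\fd(\omega)=3\fu(\eta)$, and the two expressions for $\fz(\omega)$), merely phrasing them as commutator compatibility checks rather than as exactness of the one-forms $\alpha,\beta,\gamma,\delta$, and the decisive step — playing $\fz(\omega)=2\fd(\fd(\eta))=2\fu(\fu(\eta))$ against $[\fd,\fu]\omega=-3\left(\fd(\fd(\eta))+\fu(\fu(\eta))\right)$ to kill both second derivatives separately — is exactly the paper's. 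Your sufficiency direction is in fact more detailed than the paper's, which only asserts that the argument reverses.
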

		\begin{proof}
			Let \eqref{eqn:7.4} to have at least one solution. Then $\fu(a^0)=a^2$ and $\fd(a^0)=-a^1$. Combine with $\fu(a^1)=\fd(a^2)=-\eta$ to find
			\[
			\fz(a^0)=[\fd,\fu](a^0)=-2\eta.
			\]
			Imposing the exactness of $\alpha\doteq -2\eta\nuz+a^2\nuu-a^1\nud$, we recover two non trivial integrability conditions
			\begin{align}
				\label{eqn:7.6}
				2\fu(\eta)+\fz(a^2)&=0,\crcr
				2\fd(\eta)-\fz(a^1)&=0.
			\end{align}
			To guarantee the existence of the functions $a^1,\;a^2:M\to\R$ in \eqref{eqn:7.4}, these two differential one forms also have
			to be exact
			\begin{align*}
				\beta&\doteq 2\fd(\eta)\nuz-\eta\nuu+\omega\nud,\crcr
				\gamma&\doteq -2\fu(\eta)\nuz-\omega\nuu-\eta\nud;
			\end{align*}
			this leads to the following system of integrability conditions
			\begin{equation}
				\label{eqn:7.7}
				\left\{
				\begin{aligned}
					3\fd(\eta)+\fu(\omega)&=0,\crcr
					-3\fu(\eta)+\fd(\omega)&=0,\crcr
					-2\fu(\fd(\eta))-\fz(\eta)&=0,\crcr
					2\fd(\fu(\eta))-\fz(\eta)&=0,\crcr
					-2\fd(\fd(\eta))+\fz(\omega)&=0,\crcr
					2\fu(\fu(\eta))-\fz(\omega)&=0.
				\end{aligned}
				\right.
			\end{equation}
			From the first two and the last two equations it follows that both
			\begin{align*}
				[\fd,\fu](\omega)&=-3\fd(\fd(\eta))-3\fu(\fu(\eta)),\\
				\fz(\omega)&=\fd(\fd(\eta))+\fu(\fu(\eta))
			\end{align*}
			hold true, hence $\fz(\omega)=0$, which implies
			\begin{equation}
				\label{eqn:7.8}
				\fd(\fd(\eta))=0,\quad \fu(\fu(\eta))=0.
			\end{equation}
			Using the fourth equation in \eqref{eqn:7.7} and the commutator relation $[\fd,\fu](\eta)=\fz(\eta)$ we further obtain
			\begin{equation}
				\label{eqn:7.9}
				\fu(\fd(\eta))+\fd(\fu(\eta))=0;
			\end{equation}
			from the existence assumption for $\omega:M\to\R$ we deduce the exactness of the differential one form
			\[
			\delta\doteq -3\fd(\eta)\nuu+3\fu(\eta)\nud,
			\]
			which yields the non trivial conditions
			\begin{equation}
				\label{eqn:7.10}
				\fz(\fu(\eta))=0,\quad\fz(\fd(\eta))=0.
			\end{equation}
			Collected together, \eqref{eqn:7.8},\eqref{eqn:7.9} and \eqref{eqn:7.10} are exactly the conditions given in \eqref{eqn:7.5}.\newline Conversely, 
			it is easy to repeat backwards the previous argument assuming the validity of \eqref{eqn:7.5}. This completes the proof.
		\end{proof}
		\begin{corollary}
			\label{thm:7.5}
			Under the local parametrization of $\mathfrak h_3$ given by 
			\[
			\fz=\pz,\quad \fu=\funo,\quad \fd=\fdue,
			\]
			the system \eqref{eqn:7.4} has at least a local solution if and only if 
			\begin{equation}
				\label{eqn:7.14}
				\eta=\xi_1 x+\xi_2 y+\xi_3 z+\xi_4, \quad \xi_i\in\R,\;\text{for } i=1,2,3,4.
			\end{equation}
		\end{corollary}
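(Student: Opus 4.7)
The plan is to take the five scalar PDEs produced by Theorem \ref{thm:7.4} and unwind them in the explicit Heisenberg coordinates, exploiting the bracket relation $[\fd,\fu]=\fz$ to pin down $\eta$ directly. Since $\fz=\pz$, the first two conditions $\fz(\fu(\eta))=\fz(\fd(\eta))=0$ merely assert that $F\doteq\fu(\eta)$ and $G\doteq\fd(\eta)$ are independent of $z$. Once this is established, $\fu$ and $\fd$ act on $F$ and $G$ as the ordinary partial derivatives $\py$ and $\px$ (since the $\pz$-terms in the explicit expression of $\fu,\fd$ are killed), so the remaining three equations $\fu(\fu(\eta))=\fd(\fd(\eta))=0$ and $\fu(\fd(\eta))+\fd(\fu(\eta))=0$ collapse to $F_y=0$, $G_x=0$ and $F_x+G_y=0$.

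Hence $F$ depends only on $x$, $G$ only on $y$, and $F'(x)=-G'(y)$ forces both derivatives to equal a common constant $c$; so $F(x)=cx+a$ and $G(y)=-cy+b$ for some $a,b,c\in\R$. The crucial step is then to use the Heisenberg bracket to read off the $z$-derivative of $\eta$:
\[
\fz(\eta)=[\fd,\fu](\eta)=\fd(F)-\fu(G)=c-(-c)=2c,
\]
so $\eta_z\equiv 2c$. Integrating in $z$ gives $\eta(x,y,z)=2cz+\psi(x,y)$, and substituting back into the identities $\fu(\eta)=cx+a$ and $\fd(\eta)=-cy+b$ reduces them to $\psi_y=a$ and $\psi_x=b$. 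Therefore $\psi(x,y)=bx+ay+d$ for some $d\in\R$, which yields \eqref{eqn:7.14} with $\xi_1=b$, $\xi_2=a$, $\xi_3=2c$, $\xi_4=d$.

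The converse implication is a direct check: for $\eta=\xi_1 x+\xi_2 y+\xi_3 z+\xi_4$ one computes $\fu(\eta)=\xi_2+\tfrac{\xi_3}{2}x$ and $\fd(\eta)=\xi_1-\tfrac{\xi_3}{2}y$, both independent of $z$ and annihilated by the relevant second-order operators, so Theorem \ref{thm:7.4} applies. I do not foresee a substantive obstacle; the only delicate observation is that $\fz(\fu(\eta))=0$ must be interpreted literally as \emph{$z$-independence} in these coordinates, which is precisely what triggers the trivialization of the remaining equations into a system in $x$ and $y$ alone.
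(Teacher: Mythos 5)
Your proof is correct and follows essentially the same route as the paper's: both reduce the statement to the conditions of Theorem \ref{thm:7.4}, unwind them in the explicit coordinates, and use the bracket $[\fd,\fu]=\fz$ to control the $z$-dependence of $\eta$. The paper phrases the conclusion as the vanishing of all second partial derivatives of $\eta$ (first deducing $\partial_z^2\eta=0$ from the commutator relations), whereas you integrate the reduced system explicitly via $F=\fu(\eta)$ and $G=\fd(\eta)$; the content is the same, and your version actually fills in the details the paper leaves to the reader.
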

		\begin{proof}
			Since $[\fu,\fz](\eta)=[\fd,\fz](\eta)=0$, we have $\fz(\fz(\eta))=[\fd,\fu]\fz(\eta)=0$,
			that is
			\[
			\ptz\eta=0
			\]
			Rewrite in coordinates the conditions of theorem \ref{thm:7.4} to deduce that all possible second derivatives of $\eta$ must vanish; this is possible if,
			and only if, $\eta$ is as in $\eqref{eqn:7.14}$.
		\end{proof}
		Using the linearity of the Poisson brackets, it is sufficient to analyze \eqref{eqn:7.4} just for
		\[
		\eta=\xi_1 x,\quad \eta=\xi_2 y, \quad \eta=\xi_3 z, \quad \eta=\xi_4, \quad \xi_i\in\R;
		\]
		it is well known that the algebra of sub-Riemannian isometries of $\Heis_3$ is four dimensional and generated by
		\[
		F_1=\pz,\quad F_2=\px+\frac y2\pz,\quad F_3=\py-\frac x2\pz,\quad F_4=x\py-y\px
		\]
		(see for example \cite{Figueroa99}); the computations below extend those calculations to the whole conformal group.
		\begin{corollary}
			\label{thm:7.6}
			The Lie algebra $\mathfrak{conf}(\Heis_3)$ of conformal sub-Riemannian vector fields is eight dimensional. A complete set of generators is given by
			\[
			\begin{split}
			(\eta=0)\quad&F_1=\pz,\:\; F_2=\px+\frac y2\pz,\:\; F_3=\py-\frac x2\pz,\:\; F_4=x\py-y\px,\\
			(\eta=\xi_4)\quad&F_5=-2z\pz-x\px-y\py,\\
			(\eta=\xi_1x)\quad&F_6=\frac12(3y^2-x^2)\px+2(z-xy)\py+\left(-xz-\frac{x^2y}{4}-\frac{y^3}{4}\right)\pz,\\
			(\eta=\xi_2y)\quad&F_7=-2(z+xy)\px+\frac12(3x^2-y^2)\py+\left(-yz+\frac{x^3}{4}+\frac{xy^2}{4}\right)\pz,\\
			(\eta=\xi_3z)\quad&F_8=\left(-xz-\frac{x^2y}{4}-\frac{y^3}{4}\right)\px+\left(-yz+\frac{xy^2}{4}+\frac{x^3}{4}\right)\py+\\
			&\hphantom{F_8}+\left(-z^2+\frac{1}{16}(x^2+y^2)^2\right)\pz.
			\end{split}
			\]
		\end{corollary}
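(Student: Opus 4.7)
By Corollary 7.5, the function $\eta$ associated with any $X \in \mathfrak{conf}(\Heis_3)$ has the form $\eta = \xi_1 x + \xi_2 y + \xi_3 z + \xi_4$, so the assignment $X \mapsto \eta$ takes values in a four-dimensional space of affine polynomials. Since both system (7.4) and the definition of $\eta$ are linear in the pair $(X,\eta)$, we have the dimension identity
\begin{equation*}
\dim\mathfrak{conf}(\Heis_3) \;=\; \dim\ker(X \mapsto \eta) \;+\; \dim\mathrm{Im}(X \mapsto \eta).
\end{equation*}
The kernel is precisely the infinitesimal isometry algebra, which is already known to be four-dimensional and spanned by $F_1, F_2, F_3, F_4$ (see \cite{Figueroa99}). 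The corollary therefore reduces to exhibiting, for each of the four basis choices $\eta \in \{1, x, y, z\}$, one explicit conformal field realising that $\eta$; these will give $F_5, F_6, F_7, F_8$.

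To produce the particular solutions, the first two equations of (7.4), specialised to $\Heis_3$, read $a^2 = \fu(a^0)$ and $a^1 = -\fd(a^0)$, so the entire vector field $X = a^0 \fz + a^1 \fu + a^2 \fd$ is determined by the single unknown $a^0 : \R^3 \to \R$. Substituting into the remaining equations reduces the system to
\begin{equation*}
\fu\fd(a^0) = \eta, \qquad \fd\fu(a^0) = -\eta,
\end{equation*}
with the auxiliary relation $\fz(a^0) = -2\eta$ following automatically from $[\fd,\fu] = \fz$. My plan is to first integrate $\fz(a^0) = -2\eta$ in the $z$-direction to pin down the $z$-dependence of $a^0$, then impose the mixed-derivative conditions on the $z$-independent remainder $g(x,y)$; integrability of the resulting PDE for $g$ is guaranteed by Theorem 7.4.

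Carrying this out for the four basis choices of $\eta$ yields the explicit generators of the statement. For $\eta = \xi_4$ one immediately gets $a^0 = -2z$, producing the isotropic dilation $F_5$. For $\eta = \xi_1 x$ and $\eta = \xi_2 y$ the $z$-integration gives $a^0 = -2xz + \tfrac12 y(x^2+y^2)$ and its mirror under $x \leftrightarrow y$, leading respectively to $F_6$ and $F_7$, a pair related by the discrete symmetry $x\leftrightarrow y,\,\fu\leftrightarrow -\fd$ of the Heisenberg structure. For $\eta = \xi_3 z$ the integration contributes a $-z^2$ piece, after which the mixed-derivative PDE $g_{xy} \propto xy$ forces the quartic correction $\tfrac{1}{16}(x^2+y^2)^2$, yielding $F_8$. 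Residual freedom in the lower-order part of $g$ may be absorbed into $F_1,\dots,F_4$ so that the normalized formulae of the statement are recovered.

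The main obstacle is the case $\eta = \xi_3 z$: it is the only one in which the $z$-dependence couples nontrivially through the $\tfrac{x}{2}\pz$ and $-\tfrac{y}{2}\pz$ parts of $\fu$ and $\fd$ and forces quartic polynomial growth in $a^0$, so verification that the resulting $F_8$ really satisfies $\{F_8^*, h\} = 2\xi_3 z\, h$ will demand careful bookkeeping of many cancelling cross terms. Once $F_8$ is constructed, linear independence of $\{F_1,\dots,F_8\}$ is immediate by inspecting polynomial degrees, completing the dimension count $\dim\mathfrak{conf}(\Heis_3) = 4+4 = 8$.
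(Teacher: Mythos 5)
Your overall strategy coincides with the paper's: both arguments reduce by linearity to the four basis choices of $\eta$ (Corollary \ref{thm:7.5} giving the four-dimensional upper bound on the image of $X\mapsto\eta$, the known four-dimensional isometry algebra giving the kernel), and both then exhibit one explicit particular solution for each basis $\eta$. The only organizational difference is that the paper integrates bottom-up --- first $\omega$ from \eqref{eqn:7.7}, then $a^1,a^2$, then $a^0$ --- whereas you parametrize the whole field by $a^0$ and integrate top-down. The particular data you propose are correct where you make them explicit: $a^0=-2z$ does produce $F_5$, and $a^0=-2xz+\tfrac12y(x^2+y^2)$ does produce $F_6$.

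There is, however, one concrete gap in your reduction. Substituting $a^1=-\fd(a^0)$ and $a^2=\fu(a^0)$ into the remaining equations of \eqref{eqn:7.4} does not yield only $\fu(\fd(a^0))=\eta$ and $\fd(\fu(a^0))=-\eta$: the pair $\fd(a^1)=\omega$, $\fu(a^2)=-\omega$ (equivalently the fifth equation of \eqref{eqn:7.3}) translates into the additional second-order constraint $\fu(\fu(a^0))=\fd(\fd(a^0))$, which you drop. This constraint is not implied by the two mixed-derivative conditions: for example $a^0=x^2$ satisfies $\fu(\fd(a^0))=\fd(\fu(a^0))=0$, yet it produces $X=-2x\py$, which is not an infinitesimal isometry since $\fu(a^2)+\fd(a^1)=-2\neq 0$. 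Hence the solution space of your reduced system is strictly larger than $\mathfrak{conf}(\Heis_3)$, and an $a^0$ obtained by solving only your two equations need not give a conformal field. Your conclusion survives because you take the kernel dimension from the literature rather than from the reduced system, and because you announce a final verification of $\{X^*,h\}=2\eta h$ for the exhibited fields --- but that verification is then carrying essential logical weight, not mere bookkeeping, and the cleanest repair is simply to restore the missing equation to the reduced system. A minor further point: the $a^0$ for $F_7$ is not the naive $x\leftrightarrow y$ mirror of the one for $F_6$; the relevant discrete symmetry is $(x,y,z)\mapsto(y,x,-z)$, which exchanges $\fu$ and $\fd$ and sends $a^0\mapsto -a^0$, giving $a^0=-2yz-\tfrac12x(x^2+y^2)$.
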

		\begin{proof}
			We will go through all the details just for $\eta=\xi_3 z$. Use the equations in \eqref{eqn:7.7} to deduce 
			\[
			\pz\omega=0,\quad\py\omega=\frac32 \xi_3 y,\quad\px\omega=\frac32 \xi_3 x,
			\]
			from which it follows
			\begin{equation}
				\label{eqn:7.16}
				\omega=\frac34 \xi_3\left(x^2+y^2\right)+c^\omega,\quad c^\omega\in\R.
			\end{equation}
			From here it is immediate to solve the systems for $a^1,a^2$ and $a^0$; we find
			\begin{align}
				\label{eqn:7.17}
				a^1&=-\xi_3 yz+\frac 14 \xi_3 xy^2+\frac14 \xi_3 x^3+c^\omega x+c^1, \quad c^1\in\R\crcr
				a^2&=-\xi_3 xz-\frac 14 \xi_3 x^2y-\frac14 \xi_3 y^3-c^\omega y+c^2, \quad c^2\in\R
			\end{align}
			and
			\begin{equation}
				\label{eqn:7.19}
				a^0=-\xi_3 z^2-\frac18 \xi_3 x^2y^2-\frac{1}{16}\xi_3\left(x^4+y^4\right)-\frac{c^\omega}{2}\left(x^2+y^2\right)+c^2y-c^1x+c^0,\quad c^0\in\R.
			\end{equation}
			Any $X\in\Vector(M)$ has the form $X=\sum_{i=0}^2 a^i f_i$;
			collecting similar terms and simplifying, we find that 
			\[
			\begin{split}
			X&=c^0\pz+c^1\left(\py-\frac x2\pz\right)+c^2\left(\px+\frac y2\pz\right)+c^\omega\left(x\py-y\px\right)\\
			&+\xi_3\left(\left(-xz-\frac{x^2y+y^3}{4}\right)\px+\left(-yz+\frac{xy^2+x^3}{4}\right)\py+\right.\\
			&+\left.\left(-z^2+\frac{1}{16}(x^2+y^2)^2\right)\pz\right).
			\end{split}
			\]
			We already know that the first four terms are the generators for the sub-Riemannian isometries; the non trivial solution to the
			system \eqref{eqn:7.4} is then just the last one, which we call $F_8$. The explicit computation shows that it has the correct expression
			as claimed in the statement of the theorem.
		\end{proof}
		
		\begin{remark}
			\label{rem:7.7}
			It is known (see for example \cite{AgrachevBarilariBoscain12}) that the triplet $(x,y,z)$, endowed with the weight $w$ so that
			\[
			w(x)=w(y)=1,\;\;w(z)=2,\;\;w\left(\px\right)=w\left(\py\right)=-1,\;\;w\left(\pz\right)=-2,
			\]
			together with the local parametrization
			\[
			\fu=\funo,\quad\fd=\fdue,\quad\fz=\pz,
			\]
			gives a system of \emph{privileged coordinates} for the Heisenberg group. The Lie algebra of the conformal group, $\mathfrak{conf}(\Heis_3)$, becomes then
			a \emph{graded} Lie algebra, that is
			\begin{equation}
				\label{eqn:7.20}
				\mathfrak{conf}(\Heis_3)=\mathfrak{h}^{-2}\oplus\mathfrak{h}^{-1}\oplus\mathfrak{h}^0\oplus\mathfrak{h}^1\oplus\mathfrak{h}^2,
			\end{equation}
			where the superscript of each $\mathfrak{h}^i$ corresponds to the weight of its generators.
		\end{remark}  
		
		$\mathfrak{conf}(\Heis_3)$ can be described purely algebraically applying Tanaka's prolongation. For a detailed exposition of this topic one can refer
		to \cite{Zelenko09}; we recall here just how to proceed with this construction.\newline
		Let $\mathfrak{h}^0$ be the subalgebra of
		the derivations of $\mathfrak h_3$ generated by rotations and dilations; $\mathfrak h_3\oplus\mathfrak{h}^0$ becomes endowed with the structure of a graded Lie algebra where
		\[
		[f,v]\doteq f(v),\quad \forall f\in\mathfrak{h}^0,\;v\in\mathfrak{h}_3.
		\]
		Denote, for any positive integer $l>0$,
		\begin{equation}
			\label{eqn:7.21}
			\begin{aligned}
				\mathfrak{h}^l\doteq&\bigg\{f\in\bigoplus_{i<0}\text{Hom}(\mathfrak{h}^i,\mathfrak{h}^{i+l})\colon\\
				&f([v_1,v_2])=[f(v_1),v_2]+[v_1,f(v_2)],\forall v_1,v_2\in\mathfrak h_3\bigg\},
			\end{aligned}                    
		\end{equation}
		and define inductively the brackets via the position
		\begin{equation}
			\label{eqn:7.22}
			[f_1,f_2]v\doteq [f_1(v),f_2]+[f_1,f_2(v)]\quad\forall v\in\mathfrak h_3,\;f_1\in\mathfrak{h}^l,\;f_2\in\mathfrak{h}^k.
		\end{equation}
		Using \eqref{eqn:7.21} and \eqref{eqn:7.22}, one can show that the structure of $\mathfrak{conf}(\Heis_3)$ is as follows 
		\begin{equation}
			\label{eqn:7.23}
			\mathfrak{conf}(\Heis_3)=\mathfrak{h}^{-2}\oplus\mathfrak{h}^{-1}\oplus\mathfrak{h}^0\oplus\mathfrak{h}^1\oplus\mathfrak{h}^2,
		\end{equation}
		where
		\begin{equation*}
			\begin{array}{lll}
				\mathfrak{h}^{-2}=\Span\{\fz\},\; & \mathfrak{h}^{-1}=\Span\{\fd,\fu\},\; & \mathfrak{h}^0=\Span\{\lambdazu,\lambdazd\}\\
				\\
				\mathfrak{h}^1=\Span\{\lambdauu,\lambdaud\},\; & \mathfrak{h}^2=\Span\{\Lambda\}; &
			\end{array}
		\end{equation*}
		and the nontrivial brackets are
		\begin{alignat}{5}
			\label{eqn:7.24}
			& [\fd,\fu]=\fz,\quad && [\lambdazd,\fz]=0,\quad && [\Lambda,\fu]=-\lambdauu,\crcr
			& [\fd,\fz]=0,\quad && [\lambdauu,\fd]=\lambdazu,\quad && [\Lambda,\fz]=2\lambdazu,\crcr
			& [\fu,\fz]=0,\quad && [\lambdauu,\fu]=3\lambdazd,\quad && [\lambdauu,\lambdazu]=\lambdauu,\crcr
			& [\lambdazu,\fd]=\fd,\quad && [\lambdauu,\fz]=-2\fu,\quad && [\lambdauu,\lambdazd]=-\lambdaud,\crcr
			& [\lambdazu,\fu]=\fu,\quad && [\lambdaud,\fd]=-3\lambdazd,\quad && [\lambdaud,\lambdazu]=\lambdaud,\crcr
			& [\lambdazu,\fz]=2\fz,\quad && [\lambdaud,\fu]=\lambdazu, && [\lambdaud,\lambdazd]=\lambdauu,\crcr
			& [\lambdazd,\fd]=\fu,\quad && [\lambdaud,\fz]=2\fd,\quad && [\lambdauu,\lambdaud]=2\Lambda,\crcr
			& [\lambdazd,\fu]=-\fd,\quad && [\Lambda,\fd]=\lambdaud,\quad && [\Lambda, \lambdazu]=2\Lambda.
		\end{alignat}
		
		\begin{remark}
			\label{rem:7.5}
			The isomorphism between the Lie algebra found in corollary \ref{thm:7.6} and the one above is given by
			\begin{alignat}{5}
				\label{eqn:7.25}
				F_1&\mapsto \fz,\quad & F_4&\mapsto \lambdazd,\quad & F_7&\mapsto \lambdaud,\crcr
				F_2&\mapsto \fd,\quad & F_5&\mapsto \lambdazu,\quad & 2F_8&\mapsto -\Lambda,\crcr
				F_3&\mapsto -\fu,\quad & F_6&\mapsto -\lambdauu.
			\end{alignat}
		\end{remark}
		
		\begin{theorem}
			\label{thm:7.27}
			\[
			\mathfrak{conf}(\Heis_3)\cong \mathfrak{su}(2,1).
			\]   
		\end{theorem}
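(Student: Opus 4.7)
The plan is to exhibit an explicit Lie-algebra isomorphism between the eight-dimensional graded algebra described in \eqref{eqn:7.23}--\eqref{eqn:7.24} and $\mathfrak{su}(2,1)$, exploiting the fact that both come equipped with a natural contact $|2|$-grading of the same shape $(1,2,2,2,1)$. First I would realize $\mathfrak{su}(2,1)$ concretely as the real Lie algebra of traceless complex $3\times 3$ matrices $A$ satisfying $A^*J+JA=0$, where $J$ is the antidiagonal Hermitian form
\[
J=\begin{pmatrix} 0 & 0 & 1 \\ 0 & 1 & 0 \\ 1 & 0 & 0 \end{pmatrix}
\]
of signature $(2,1)$. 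A direct parameter count confirms that this algebra has real dimension $8$, matching $\dim\mathfrak{conf}(\Heis_3)$. Declaring $H=\mathrm{diag}(1,0,-1)$ to be the grading element, $\mathrm{ad}(H)$ has eigenvalues in $\{-2,-1,0,1,2\}$ with multiplicities $1,2,2,2,1$, providing a $|2|$-grading identical in shape to \eqref{eqn:7.23}.

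Next I would match the two algebras piece by piece, starting from the bottom. The unique weight $-2$ root vector of $\mathfrak{su}(2,1)$ gets identified with $\fz$; a convenient real basis of the weight $-1$ root space (essentially the non-diagonal $(2,1)$ and $(3,2)$ entries) gets identified with $\fu$ and $\fd$, after rescaling so that $[\fd,\fu]=\fz$. For the grade zero piece I would send the dilation $\lambdazu$ to (a rescaling of) $H$ itself and the rotation $\lambdazd$ to the remaining diagonal generator constrained by the trace condition; this is exactly the conformal symplectic algebra $\mathfrak{co}(2)$ acting on the contact plane, so the brackets involving $\lambdazu,\lambdazd$ and the negative part reproduce the third column of \eqref{eqn:7.24}. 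Finally the weight $+1$ root vectors get sent to $\lambdauu,\lambdaud$ and the weight $+2$ root vector to $\Lambda$, with the normalizing scalars fixed precisely so that the remaining relations in \eqref{eqn:7.24} hold (notably $[\lambdauu,\lambdaud]=2\Lambda$ and $[\Lambda,\fz]=2\lambdazu$). The verification that this assignment extends to a Lie algebra homomorphism reduces to a finite number of bracket checks on the basis.

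A conceptually cleaner alternative, which avoids the enumeration of brackets, is to invoke Tanaka's prolongation theorem \cite{Tanaka79,Zelenko09}: the algebra $\mathfrak{conf}(\Heis_3)$ is, by its very construction via \eqref{eqn:7.21}--\eqref{eqn:7.22}, the full prolongation of the graded pair $(\mathfrak{h}^{-2}\oplus\mathfrak{h}^{-1},\mathfrak{h}^0)$ with $\mathfrak{h}^{-2}\oplus\mathfrak{h}^{-1}\cong\mathfrak h_3$ and $\mathfrak{h}^0=\mathfrak{co}(2)$. The contact grading of $\mathfrak{su}(2,1)$ yields the same prolongation data, and the Tanaka prolongation is unique up to graded isomorphism; hence the two algebras coincide. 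I expect the main obstacle to be a purely bookkeeping one, namely fixing the normalization constants so that the $\mathfrak{h}^0$ piece determined by rotations and dilations of the sub-Riemannian frame matches exactly the grade zero piece of the contact grading of $\mathfrak{su}(2,1)$; once this identification is pinned down, the finiteness of the prolongation at grade $2$ together with the dimensional agreement $\dim\mathfrak{su}(2,1)=8$ completes the argument.
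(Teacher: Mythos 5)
Your first strategy is exactly the paper's proof: the paper establishes the theorem by exhibiting precisely such an explicit graded isomorphism onto the matrix realization of $\mathfrak{su}(2,1)$ associated to the antidiagonal Hermitian form, sending $\lambdazu$ to the grading element $E_{11}-E_{33}$, $\fz$ and $\Lambda$ to multiples of $E_{13}$ and $E_{31}$, and the grade $\pm1$ pieces to the off-diagonal root vectors, with the scalars fixed so that the brackets \eqref{eqn:7.24} are reproduced. Your Tanaka-prolongation alternative is in the spirit of the paper's surrounding discussion of \eqref{eqn:7.21}--\eqref{eqn:7.22}, but the printed proof is the explicit matrix assignment, so no further comparison is needed.
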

		\begin{proof}
			Denoting with $E_{ij}$ the $3\times 3$ real-valued matrix whose only nonzero entry is $e_{ij}=1$, we have that the claimed isomorphism is
			\begin{alignat*}{5}
				\fd&\mapsto E_{12}+E_{23},\quad & \lambdazu&\mapsto E_{11}-E_{33},\quad & \lambdaud&\mapsto -iE_{21}+iE_{32},\\
				\fu&\mapsto -iE_{12}+iE_{23},\quad & 3\lambdazd&\mapsto -iE_{11}+2i E_{22}+ -iE_{33},\quad & \Lambda&\mapsto iE_{31},\\
				\fz&\mapsto 2iE_{13},\quad & \lambdauu&\mapsto -E_{21}-E_{32}.
			\end{alignat*}
		\end{proof}
		\begin{remark}
			\label{rem:7.6}
			By virtue of the previous discussion, we deduce that the local conformal flatness of the Fefferman metric associated to a sub-Riemannian three dimensional left invariant contact structure identifies those cases where the conformal group has the maximal possible dimension. In particular there is a dimension gap between the flat and the non-flat case; in the first case the conformal group has dimension eight, while in the latter it has dimension three, with no intermediate possibilities in between.
		\end{remark}      
		%\begin{acknowledgements}
		%	I want to express all my gratitude to my supervisor, prof A. A. Agrachev, for his constant support and many invaluable discussions. I also want
		%	to thank him for pointing me out new research directions in several occasions when I was running out of ideas. Without doubt, it is also his merit if this work has been
		%	completed. I want to acknowledge also the anonymous referees for their unusual care in pointing out mistakes and for their useful advices to improve the exposition.
		%\end{acknowledgements}
		
		% BibTeX users please use one of
		%\bibliographystyle{spbasic}      % basic style, author-year citations
		%\bibliographystyle{spmpsci}      % mathematics and physical sciences
		%\bibliographystyle{spphys}       % APS-like style for physics
		%\bibliography{}   % name your BibTeX data base

\begin{thebibliography}{99}
			%
			% and use \bibitem to create references. Consult the Instructions
			% for authors for reference list style.
			%
			%\bibitem{RefJ}
			% Format for Journal Reference
			%Author, Article title, Journal, Volume, page numbers (year)
			% Format for books
			%\bibitem{RefB}
			%Author, Book title, page numbers. Publisher, place (year)
			% etc
			
			\bibitem{Agrachev96}
			A. A. Agrachev, Exponential mappings for contact sub-Riemannian structures, J. Dynamical and Control Systems, vol. 2, pp.321-358 (1996) 
			
			\bibitem{AgrachevBarilari12}
			A. A. Agrachev, D. Barilari, Sub-Riemannian structures on 3D Lie groups, J. Dynamical and Control Systems, vol. 18, pp.21-44 (2012)
			
			\bibitem{AgrachevBarilariBoscain12}
			A. A. Agrachev, D. Barilari, U. Boscain, On the Hausdorff volume in sub-Riemannian geometry, Calc. Var. and PDE, vol. 43, pp.355-388 (2012)
			
			\bibitem{Book}
			A. A. Agrachev and Y. L. Sachkov, Control theory from the geometric viewpoint,
			vol. 87 of Enciclop\ae dia of Mathematical Sciences, Springer-Verlag, Berlin (2004)
			
			\bibitem{Arnold89}
			V. I. Arnol'd, Mathematical methods of classical mechanics,
			Graduate Text in Mathematics, 60, Springer-Verlag, New York (1989)
			
			\bibitem{Arvanitoyeorgos03}
			A. Arvanitoyeorgos, An introduction to Lie groups and the geometry of homogeneous spaces, American Mathematical Soc., v.22 (2003)
			
			\bibitem{BarilariRizzi13}
			D. Barilari, L. Rizzi, A formula for Popp's volume in sub-Riemannian geometry,
			Analysis and Geometry in Metric Spaces, vol. 1, pp.42-57 (2013)
			
			\bibitem{CastroMontgomery08}
			A. L. Castro, R. Montgomery, The chains of left-invariant Cauchy-Riemann structures on $SU(2)$,
			Pacific J. Math, vol. 238, pp.41-71 (2008)
			
			\bibitem{Falbel94}
			E. Falbel, C. Gorodski, J.M. Veloso, Conformal sub-Riemannian geometry in dimension $3$, IX School of Differential Geometry (Vit\'oria, 1994), Mat. Contemp. 9, pp.61-73 (1995)
			
			\bibitem{FalbelGorodski95}E. Falbel, C. Gorodski,
			On contact sub-Riemannian symmetric spaces, Ann. Sci. \'Ecole Norm. Sup. (4) 28, no. 5, pp. 571-589 (1995).
			
			\bibitem{FalbelGorodski96}
			E. Falbel, C. Gorodski,
			Sub-Riemannian homogeneous spaces in dimensions 
			$3$ and $4$, Geom. Dedicata 62, no. 3, pp. 227-252 (1996).
			
			\bibitem{Farris86}
			F. A. Farris, An intrinsic construction of Fefferman's CR metric,
			Pacific J. Math, vol. 123, pp.33-45 (1986)
			
			\bibitem{FeffermanGraham12}
			C. Fefferman, C. R. Graham,
			The ambient metric,
			Annals of mathematics studies, vol. 178, pp. x+113 Princeton University press, NJ (2012)
			
			\bibitem{Figueroa99}
			C. B. Figueroa, F. Mercuri, R. H. Pedrosa,
			Invariant surfaces of the Heisenberg groups,
			Annali di Matematica pura ed applicata, vol. 177, pp. 173-194 (1999)
			
			\bibitem{Jacobson62}
			N. Jacobson, Lie algebras,
			Interscience Tracts in Pure and Applied Mathematics, No. 10. Interscience Publishers (a division of John Wiley \& Sons), New York-London (1962)
			
			\bibitem{Kirillov08}
			A. A. Kirillov, An introduction to Lie groups and Lie algebras,
			Cambridge University Press (2008)
			
			\bibitem{Lee86}
			J. M. Lee,
			The Fefferman metric and pseudo-Hermitian invariants,
			Trans. Amer. Math. Soc., vol. 296, pp.411-429 (1986)
			
			\bibitem{Mitchell85}
			J. Mitchell, On Carnot-Carath\'eodory metrics
			J. Differential Geom., vol. 21, pp.35-45 (1985)
			
			\bibitem{Montgomery06}
			R. Montgomery,
			A tour of subriemannian geometries, their geodesics and applications,
			American Mathematical Soc. no. 91 (2006)
			
			\bibitem{Patera76}
			J. Patera, R. T. Sharp, P. Winternitz and H. Zassenhaus,
			nvariants of real low dimension Lie algebras,
			Journal of Math. Physics vol. 17, pp. 986-994 (1976)
			
			\bibitem{Rashevsky38}
			P. Rashevsky,
			Any two points of a totally nonholonomic space may be connected by an admissible line,
			Uch. Zap. Ped Inst. im. Liebknechta, vol. 2, pp.83-84 (1938)
			
			\bibitem{Tanaka79}
			N. Tanaka, 
			On the equivalence problems associated with simple graded Lie algebras,
			Hokkaido Math. J., vol. 8, pp.23-84 (1979)
			
			\bibitem{Zelenko09}
			I. Zelenko,
			On Tanaka's prolongation procedure for filtered structures of constant type,
			SIGMA Symmetry Integrability Geom. Methods Appl., vol. 5, paper 094, 21 pages (2009)
			
		\end{thebibliography}
		
		% Non-BibTeX users please use

\end{document}